\newcommand{\tX}{\widetilde{X}}
\newcommand{\tY}{\widetilde{Y}}
\newcommand{\rtheta}{l}
\newtheorem{thm}{Theorem}[section]
\newtheorem{prop}[thm]{Proposition}
\newtheorem{cor}[thm]{Corollary}
\newtheorem{lemma}[thm]{Lemma}
\newtheorem{ass}{Assumption}
\newtheorem{rem}[thm]{Remark}
\numberwithin{equation}{section}
\newenvironment{customass}[1]
  {\innercustomass}
  {\endinnercustomass}
\theoremstyle{definition} 
\theoremstyle{definition}
\renewcommand{\P}{\mathbb{P}}
\newcommand{\R}{\mathbb{R}}
\newcommand{\E}{\mathbb{E}}
\newcommand{\N}{\mathbb{N}}
\newcommand{\eps}{\varepsilon}
\newcommand{\brho}{\mathfrak b}
\newcommand{\ud}{\mathrm{d}}
\DeclareMathOperator{\argmin}{argmin}
\numberwithin{equation}{section}
\begin{document}

\title[Parasite infection in a cell population]
{Parasite infection in a cell population with deaths}

\author{Aline Marguet}
\address{Aline Marguet, Univ. Grenoble Alpes, INRIA, 38000 Grenoble, France}
\email{aline.marguet@inria.fr}

\author{Charline Smadi}
\address{Charline Smadi, Univ. Grenoble Alpes, INRAE, LESSEM, 38000 Grenoble, France
 and Univ. Grenoble Alpes, CNRS, Institut Fourier, 38000 Grenoble, France}
\email{charline.smadi@inrae.fr}

\date{}

\maketitle

\begin{abstract}
We introduce a general class of branching Markov processes for the modelling of a parasite infection in a cell population.
Each cell contains a quantity of parasites which evolves as a diffusion with positive jumps. The growth rate, diffusive function and positive jump rate of this 
quantity of parasites depend on its current value. The division rate of the cells also depends on the quantity of parasites they contain. At division, 
a cell gives birth to two daughter cells and shares its parasites between them.
Cells may also die, at a rate which may depend on the quantity of parasites they contain.
We study the long time behaviour of the parasite infection. In particular, we are interested in the quantity of parasites in a `typical' cell and on the survival 
of the cell population. We specifically focus on the influence of two parameters on the probability for the cell population to survive and/or contain 
the parasite infection: the law of the sharing of the parasites between the daughter cells at division and the form of the division and death rates 
of the cells as functions of the quantity of parasites they contain.
 \end{abstract}

 \vspace{0.2in}

\noindent {\sc Key words and phrases}: Continuous-time and space branching Markov processes, Structured population, 
Long time behaviour, Birth and Death Processes

\bigskip

\noindent MSC 2000 subject classifications: 60J80, 60J85, 60H10.

 \vspace{0.5cm}

%  \tableofcontents

\section*{Introduction}
We introduce a general class of continuous-time and space branching Markov processes for the study of
a parasite infection in a cell population. This framework is general enough to be applied for the modelling of other
structured populations, with individual trait evolving on the set of positive real numbers.
For instance, another application we can think of, similar in spirit, is the modelling of the protein aggregates in a cell population. These latter, usually eliminated by the cells, can undergo sudden increases due to cellular stress (positive jumps), and are known to be distributed unequally between daughter cells (see \cite{rujano2006polarised} for instance).

The dynamics of the quantity of parasites in a cell is given by a Stochastic Differential Equation (SDE) with a diffusive term and positive jumps.
Then, at a random time whose law may depend on the quantity of parasites in the cell, 
this latter dies or divides. At division, it shares its parasites between its two daughter cells.
We are interested in the long time behaviour of the parasite infection in the cell population.
More precisely, we will focus on two aspects of the dynamics: the size of the cell population, and  the amount of parasites in the cells, including the possibility of explosion or extinction of the quantity of parasites in a positive fraction of the cells, or even in every cell. 
We will see that those quantities are very sensitive to the way cell division and death rates depend on the quantity of parasites in the cell, 
and to the law of the sharing of the parasites between the two daughter cells at division.

In discrete time, from the pioneer model of Kimmel \cite{Kimmel}, many studies have been conducted on branching within branching processes to study the host-parasite dynamics: on the associated quasistationary distributions \cite{bansaye2008}, considering random environment and immigration events \cite{bansaye2009}, on multitype branching processes \cite{alsmeyer2013, alsmeyer2016}. In continuous time, host-parasites dynamics have also  been studied using two-level branching processes, in which the dynamics of the parasites is modelled by a birth-death process with interactions \cite{meleard2013evolutive, osorio2020level} or a Feller process \cite{BT11,BPS}.

Some experiments, conducted in the TAMARA laboratory, have shown that cells distribute unequally their parasites between 
their two daughter cells \cite{stewart2005aging}.
This could be a mechanism aiming at concentrating the parasites in some cell lines in order to `save' the remaining lines.
It is thus important to understand the effect of this unequal sharing on the long time behaviour of the infection in the cell population.
This question has been addressed by Bansaye and Tran in \cite{BT11}. They introduced and studied branching Feller diffusions with a cell division rate depending on the 
quantity of parasites in the cell and a sharing of parasites at division between the two daughter cells according to a random
variable with any symmetric distribution on $[0,1]$. They provided some extinction criteria for the 
infection in a cell line, in the case where 
the cell division rate is constant or a monotone function of the quantity of parasites in the cell, as well as recovery criteria
at the population level, 
in the constant division rate case. In \cite{BPS}, Bansaye and coauthors extended this study by providing the long
time asymptotic of the recovery rate in the latter case.
Our work further extends these results in several directions. First, we allow the parasites' growth rate and diffusion coefficient in a cell to depend on the quantity of 
parasites it contains.
Second, we add the possibility to have positive jumps in the parasites dynamics, with a rate which may depend on the current
quantity of parasites in the cell.
Third, we allow the cell division rate to depend non monotonically on the quantity of parasites.
This situation is more difficult to study than the previous ones, as the genealogical tree of the cell population depends on 
the whole history of the quantity of parasites in the different cell lines.
Finally, we add the possibility for the cells to die at a rate which may depend on the quantity of parasites they contain. 
To our knowledge, this is the first work that takes into account possible deaths in the population for the study of the 
proliferation of parasites in a cellular population.

For the study of structured branching populations, a classical method to obtain information on the distribution of a trait in the population is to introduce a spinal decomposition. It consists in 
distinguishing a particular line of descent in the population, constructed from a size-biased tree \cite{LPP}, and to prove that the dynamics of the trait along this 
particular lineage is representative of the dynamics of the trait of a typical individual in the population, {\it i.e.} an individual picked uniformly at random. The link between the spine and the population process is given by Many-to-One formulas (see \cite{bansaye2011limit, marguet2016uniform} and references therein). Moreover, we refer to \cite{georgii2003supercritical,hardy2009spine,bansaye2011limit,cloez2017limit,marguet2016uniform, marguet2017law} for general 
results on these topics in the continuous-time case.

Our proof strategy consists in introducing such a spinal process, also known as auxiliary process.
Then, we investigate the long time behaviour of this auxiliary process that corresponds to the trait of a uniformly sampled individual in the population, 
and deduce properties on the 
long time behaviour of the process at the population level, extending previous results derived for a smaller class of 
structured Markov branching processes (see \cite{BT11,bansaye2011limit,cloez2017limit} for instance). In the case of a constant growth rate for the cellular population, the auxiliary process belongs to the class of continuous-state non-linear branching processes that has been studied in \cite{li2017general, companion}. In the general case, it is time-inhomogeneous and some adaptations of existing results on this class of processes are required. Note that the idea of studying a specific line of descent has also been used in discrete time studies of host-parasites dynamics \cite{bansaye2008,alsmeyer2016}.

The paper is structured as follows. In Section \ref{section_model}, we define the population process and give assumptions ensuring its existence and uniqueness 
as the strong solution to a SDE. In Section \ref{sec_ct_b_and_d}, we consider the case of constant division and death rates. 
Section \ref{sec_mean_numb_cells} is dedicated to the study of the asymptotic behaviour of the mean number of cells alive in the population for various 
dynamics for the parasites. We also compare different strategies for the sharing of the parasites at division and give explicit 
conditions ensuring extinction or survival of the cell population. In Section \ref{section_beta}, we focus on 
the case of a parasites dynamics without stable positive jumps and study the asymptotic behaviour of the proportion of infected cells. 
Similar questions are investigated in Section \ref{sec:LDCD} in the case of a linear division rate and a constant death rate. 
In Section \ref{sec_constant_diff} we provide necessary conditions for the cell population to contain the infection or for the quantity of parasites 
to explode in all the cells.
Sections \ref{sec:MTO} and \ref{sec:proofs} are dedicated to the proofs.
\\

In the sequel $\N:=\{0,1,2,...\}$ will denote the set of nonnegative integers, $\R_+:=[0,\infty)$ the real line,  $\bar{\R}_+:=\R_+\cup \{ + \infty \}$,
and $\R_+^*:=(0,\infty)$.
We will denote by $\mathcal{C}_b^2(A)$ the set of twice continuously differentiable bounded functions on a set $A$. Finally, for any stochastic 
process $X$ on $\bar{\mathbb{R}}_+$ or $Z$ on the set of point measures on $\bar{\mathbb{R}}_+$, we will denote by 
$\mathbb{E}_x\left[f(X_t)\right]=\E\left[f(X_t)\big|X_0 = x\right]$ and $\mathbb{E}_{\delta_x}\left[f(Z_t)\right]=\E\left[f(Z_t)\big|Z_0 = \delta_x\right]$.

\section{Definition of the population process} \label{section_model}

\subsection{Parasites dynamics in a cell}

Each cell contains parasites whose quantity evolves as a diffusion with positive jumps. More precisely, we consider the SDE
\begin{align} \nonumber \label{X_sans_sauts} \mathfrak{X}_t =x + \int_0^t g(\mathfrak{X}_s)ds
+\int_0^t\sqrt{2\sigma^2 (\mathfrak{X}_s)}dB_s &+
\int_0^t\int_0^{p(\mathfrak{X}_{s^-})}\int_{\mathbb{R}_+}z\widetilde{Q}(ds,dx,dz)\\&+
\int_0^t\int_0^{\mathfrak{X}_{s^-}}\int_{\mathbb{R}_+}zR(ds,dx,dz),
\end{align}
where $x$ is nonnegative, $g$, $\sigma \geq 0$ and $p\geq0$ are real functions on $\bar{\mathbb{R}}_+$, $B$ is a standard Brownian motion,  
$\widetilde{Q}$ is a compensated Poisson point measure with intensity 
$ds\otimes dx\otimes \pi(dz)$, $\pi$ is a nonnegative measure on $\mathbb{R}_+$,
$R$ is a Poisson point measure with intensity 
$ds\otimes dx\otimes \rho(dz)$, and $\rho$ is a measure on $\mathbb{R}_+$ with density:
\begin{align*} 
%\label{def_rho}
\rho(dz)=\frac{c_\brho\brho(\brho+1)}{\Gamma(1-\brho)}\frac{1}{z^{2+\brho}}\ud z,
\end{align*}
where $\brho \in (-1,0)$ and $c_\brho<0$ (see \cite[Section 1.2.6]{kyprianou2006introductory} for details on stable processes).
Finally, $B$, $Q$ and $R$ are independent.

We will provide later on conditions under which the SDE \eqref{X_sans_sauts} has a unique nonnegative strong solution.
In this case, it is a Markov process with infinitesimal generator $\mathcal{G}$, satisfying for all $f\in C_b^2(\mathbb{R}_+)$,
\begin{align} \label{def_gene}
\mathcal{G}f(x) = g(x)f'(x)+\sigma^2(x)f''(x)&+p(x)\int_{\mathbb{R}_+}\left(f(x+z)-f(x)-zf'(x)\right)\pi(dz)\\&+x\int_{\mathbb{R}_+}\left(f(x+z)-f(x)\right)\rho(dz)\nonumber ,
\end{align}
and $0$ and $+ \infty$ are two absorbing states.
Following \cite{marguet2016uniform}, we denote by $(\Phi(x,s,t),s\leq t)$ the corresponding stochastic flow {\it i.e.} 
the unique strong solution to \eqref{X_sans_sauts} satisfying $\mathfrak{X}_s = x$ and the dynamics of the trait between division events is well-defined.

\subsection{Cell division}

A cell with a quantity of parasites  
$x$ divides at rate $r(x)$ and is replaced by two 
individuals with quantity of parasites at birth given by $\Theta x$ and $(1-\Theta)x$. Here $\Theta$ is a nonnegative random variable on $(0,1)$ 
with associated symmetric distribution $\kappa(d\theta)$ satisfying $\int_0^1 |\ln \theta|\kappa(d\theta)<\infty$.

\subsection{Cell death}

Cells can die because of two mechanisms. 
First they have a death rate $q(x)$ which depends on the quantity of parasites $x$ they carry. We will call it `natural death'. 
The function $q$ may be nondecreasing, because the presence of parasites may kill the cell, or nonincreasing, if parasites slow down the cellular machinery 
(production of proteins, division, etc.).
Second, they can die when the quantity of parasites they carry explodes (i.e. reaches infinity in finite time), as in this case a proper 
functioning of the cell is not possible anymore. Notice that to model this case, we do not `kill the cell' strictly speaking. As infinity is an absorbing state for 
the quantity of parasites in a cell, and as a cell with an infinite quantity of parasites transmits an infinite quantity of parasites to both its daughter cells, 
we let the process evolve and decide that a cell is dead if it contains an infinite quantity of parasites.

\subsection{Existence and uniqueness}

We use the classical Ulam-Harris-Neveu notation to identify each individual. Let us denote by
\begin{equation*} \label{ulam_not} \mathcal{U}:=\bigcup_{n\in\mathbb{N}}\left\{0,1\right\}^{n}
\end{equation*}
the set of possible labels, $\mathcal{M}_P(\bar{\mathbb{R}}_+)$ the set of point measures on $\bar{\mathbb{R}}_+$, and 
$\mathbb{D}(\mathbb{R}_+,\mathcal{M}_P(\bar{\mathbb{R}}_+))$, the set of c\`adl\`ag measure-valued processes. 
For any $Z\in \mathbb{D}(\mathbb{R}_+,\mathcal{M}_P(\bar{\mathbb{R}}_+))$, $t \geq 0$, we write 
\begin{equation} \label{Ztdirac}
Z_t = \sum_{u\in V_t}\delta_{X_t^u},
\end{equation}
where $V_t\subset\mathcal{U}$ denotes the set of individuals alive at time $t$ and $X_t^u$ the trait at time $t$ of the individual $u$. 
Let $E = \mathcal{U}\times(0,1)\times \bar{\mathbb{R}}_+$  and 
$M(ds,du,d\theta,dz)$ be a Poisson point measure on $\mathbb{R}_+\times E$ with intensity 
$ds\times n(du)\times \kappa(d\theta)\times dz$, where $n(du)$ denotes the counting measure on $\mathcal{U}$. 
Let $\left(\Phi^u(x,s,t),u\in\mathcal{U},x\in\bar{\mathbb{\R}}_+, s\leq t\right)$ be a family of independent stochastic flows satisfying \eqref{X_sans_sauts} 
describing the individual-based dynamics.
We assume that $M$ and $\left(\Phi^u,u\in\mathcal{U}\right)$ are independent. We denote by $\mathcal{F}_t$ the filtration generated by the Poisson point measure $M$ and 
the family of stochastic processes $(\Phi^u(x,s, t), u\in\mathcal{U},x\in\bar{\mathbb{R}}_+,s\leq t)$ up to time $t$. \\

We now introduce assumptions to ensure the strong existence and uniqueness of the process. They are weaker than those of previously considered models, and as a consequence, 
we obtain a large class of branching Markov processes for the modelling of parasite infection in a cell population. Points $i)$ to $iii)$ of Assumption {\bf{EU}} 
(for Existence and Uniqueness) ensure that 
the dynamics in a cell line is well defined
(as the unique nonnegative strong solution to the SDE \eqref{X_sans_sauts} up to explosion, and infinite value of the quantity of parasites after explosion); 
points $iv)$ and $v)$ ensure the non-explosion of the cell population size in finite time.

\begin{customass}{\bf{EU}}\label{ass_A}
\begin{enumerate}[label=\roman*)]
\item The functions $r$ and $p$ are locally Lipschitz on $\R_+$, $p$ is non-decreasing and $p(0)= 0$. The function $g$ is continuous on $\R_+$, $g(0)=0$ and for any $n \in \N$ there exists a finite constant $B_n$ such that for any $0 \leq x \leq y \leq n$
\begin{align*} |g(y)-g(x)|
\leq B_n \phi(y-x),
\ 
\text{ where }\ 
\phi(x) = \left\lbrace\begin{array}{ll}
x \left(1-\ln x\right) & \textrm{if } x\leq 1,\\
1 & \textrm{if } x>1.\\
\end{array}\right.
\end{align*}
\item The function $\sigma$ is H\"older continuous with index $1/2$ on compact sets and $\sigma(0)=0$. 
\item The measure $\pi$ satisfies
$$ \int_0^\infty \left(z \wedge z^2\right)\pi(dz)<\infty. $$
\item There exist $r_1,r_2\geq 0$ and $\gamma\geq 0$ such that for all $x\geq 0$
\begin{align*}
r(x)-q(x)\leq r_1x^{\gamma}+r_2.
\end{align*}
\item There exist $c_1,c_2\geq 0$ such that, for all $x\in\mathbb{R}_+$,
\begin{align*}
\lim_{n\rightarrow+\infty}\mathcal{G}h_{n,\gamma}(x)\leq c_1 x^{\gamma}+c_2,
\end{align*}
where $\gamma$ has been defined in iv) and $h_{n,\gamma}\in C_b^2(\mathbb{R}_+)$ is a 
sequence of functions such that $\lim_{n\rightarrow+\infty} h_{n,\gamma}(x)=x^{\gamma}$ for all $x\in\mathbb{R}_+$.
\end{enumerate}
\end{customass}

Recall the definition of $\mathcal{G}$ in \eqref{def_gene}.
Then the structured population process may be defined as the strong solution to a SDE. 

\begin{prop} \label{pro_exi_uni}
Under Assumption \ref{ass_A} there exists a strongly unique $\mathcal{F}_t$-adapted 
c\`adl\`ag process $(Z_t,t\geq 0)$ taking values in $\mathcal{M}_P(\bar{\mathbb{R}}_+)$ such that for all $f\in C_b^2(\bar{\mathbb{R}}_+)$ and $x_0,t\geq 0$, 
\begin{align*}\label{eq:eds-pop}\nonumber
\langle Z_{t},f\rangle = &f\left(x_{0}\right)+\int_{0}^{t}\int_{\mathbb{R}_+}\mathcal{G}f(x)Z_{s}\left(dx\right)ds+M_{t}^f(x_0)&\\\nonumber
 +\int_{0}^{t}\int_{E}&\mathbf{1}_{\left\{u\in V_{s^{-}}\right\}}\left(\mathbf{1}_{\left\{\ z\leq r(X_{s^{-}}^u)\right\}}\left(f\left(\theta X_{s^-}^u \right)+ f\left((1-\theta) X_{s^-}^u \right)-
f\left(X_{s^{-}}^{u}\right)\right)\right.\\
& \left.-\mathbf{1}_{\left\{0<z-r(X_{s^{-}}^u)\leq q(X_{s^{-}}^u)\right\}}
f\left(X_{s^{-}}^{u}\right)\right) M\left(ds,du,d\theta,dz\right),
\end{align*}
where for all $x\geq 0$, $M_{t}^f(x)$ is a $\mathcal{F}_t$-martingale.
\end{prop}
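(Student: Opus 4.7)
The plan is to adapt the standard pathwise construction for measure-valued branching Markov processes, in the spirit of \cite{BT11}, together with moment estimates along the sequence $h_{n,\gamma}$ furnished by Assumption \ref{ass_A}$v)$. As a preliminary, I would settle the well-posedness of the parasite dynamics in a single cell, i.e.\ strong existence and pathwise uniqueness for the SDE \eqref{X_sans_sauts} on $\bar{\mathbb{R}}_+$, with $+\infty$ treated as an absorbing state reached (if at all) at an explosion time. Points $i)$--$iii)$ of Assumption \ref{ass_A} are designed for this purpose: the log-Lipschitz modulus $\phi(x)=x(1-\ln x)$ on $g$ enables a Yamada--Watanabe comparison for the drift, the H\"older-$1/2$ regularity of $\sigma$ controls the diffusive part, the integrability of $z\wedge z^2$ against $\pi$ handles the compensated jumps, and the state-dependent stable term driven by $R$ falls under the Fu--Li / Dawson--Li framework for continuous-state branching SDEs. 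This provides the family of independent stochastic flows $(\Phi^u(x,s,t),u\in\mathcal{U})$ used in the statement.

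Second, I would build $Z$ by induction on the successive atoms of the Poisson measure $M$ that affect living labels. Starting from a single cell labelled $\emptyset$ with trait evolving as $\Phi^\emptyset(x_0,0,\cdot)$, the first branching time $T_1$ is the first time $s$ at which $M$ has an atom $(s,u,\theta,z)$ with $u=\emptyset$ and $z\leq r(X_{s^-}^\emptyset)+q(X_{s^-}^\emptyset)$; by a thinning argument this time is a.s.\ strictly positive. Depending on whether $z\leq r(X_{T_1^-}^\emptyset)$ or $r(X_{T_1^-}^\emptyset)<z\leq r(X_{T_1^-}^\emptyset)+q(X_{T_1^-}^\emptyset)$, the cell either divides into two daughters labelled $0$ and $1$ with traits $\theta X_{T_1^-}^\emptyset$ and $(1-\theta)X_{T_1^-}^\emptyset$, or is removed from the population. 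The construction iterates independently on each living line, using its own stochastic flow between the atoms of $M$ that affect it. This yields a nondecreasing sequence $0=T_0<T_1<T_2<\cdots$ of events and an $\mathcal{F}_t$-adapted c\`adl\`ag $\mathcal{M}_P(\bar{\mathbb{R}}_+)$-valued process on $[0,T_\infty)$ with $T_\infty:=\lim_n T_n$. By It\^o's formula applied to the jump SDE, $Z$ solves the integral equation of the statement on $[0,T_\infty)$; pathwise uniqueness is built into the construction, since any strong solution must branch exactly at the atoms of $M$ under the prescribed rules and follow the given flows in between.

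Third, and this is where Assumptions $iv)$ and $v)$ enter, I would prove non-explosion, i.e.\ $T_\infty=+\infty$ a.s. Applying the SDE to the test function $h_{n,\gamma}+1$ and taking expectation up to the stopping time $t\wedge\tau_k$ for a suitable localising sequence $\tau_k\uparrow T_\infty$, one gets an identity of the form
\begin{align*}
\mathbb{E}\bigl[\langle Z_{t\wedge\tau_k},h_{n,\gamma}+1\rangle\bigr]
&= (h_{n,\gamma}+1)(x_0) \\
&\quad + \mathbb{E}\!\int_0^{t\wedge\tau_k}\!\!\!\int\bigl(\mathcal{G}h_{n,\gamma}(x)+D_{n,\gamma}(x)\bigr)Z_s(dx)\,ds,
\end{align*}
where $D_{n,\gamma}$ collects the division and death contributions and, using the symmetry of $\kappa$, the elementary inequality $\theta^\gamma+(1-\theta)^\gamma\leq 2$ on $[0,1]$, and Assumption $iv)$, is bounded by $a_1 h_{n,\gamma}(x)+a_2$ with constants independent of $n$. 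Combined with Assumption $v)$ and Gronwall, then letting $n,k\to\infty$ by monotone convergence, this gives a finite bound on $\mathbb{E}[\langle Z_t,1+x^\gamma\rangle]$ on compact time intervals, from which $T_\infty=+\infty$ a.s.\ follows.

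The main obstacle will be carrying the double limit $n\to\infty$ and $k\to\infty$ through these estimates without losing control of the state-dependent stable part of $\rho$, which is not integrable at infinity and only enters the bound via $\mathcal{G}h_{n,\gamma}$; Assumption $v)$ is precisely tailored to close this loop. Once the uniform-in-$n$ Gronwall bound is obtained, the martingale property of $M_t^f(x_0)$ follows from standard Poisson stochastic calculus applied to the jump terms and the Brownian component of $\langle Z_t,f\rangle$.
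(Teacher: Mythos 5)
Your overall architecture --- well-posedness of the single-cell SDE \eqref{X_sans_sauts}, pathwise construction of $Z$ on the Ulam--Harris tree by induction on the atoms of $M$, then non-explosion via moment estimates --- is exactly the machinery that the paper relies on, except that the paper does not redo it: it invokes \cite[Proposition 1]{palau2018branching} (via the proof of \cite[Proposition 2.1]{companion}) for the single-cell SDE and \cite[Theorem 2.1]{marguet2016uniform} for the population level. The only computation the paper carries out explicitly is the one genuinely new ingredient relative to those references, namely the $L^1$-Lipschitz estimate
$$\int_{\R_+}\int_{\R_+}\bigl|\mathbf{1}_{\{x\le u\}}(z\wedge n)-\mathbf{1}_{\{y\le u\}}(z\wedge n)\bigr|\,\rho(dz)\,du\le A_n|x-y|,$$
which is what brings the state-dependent stable term driven by $R$ into the Dawson--Li/Fu--Li framework. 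You gesture at this (``falls under the Fu--Li / Dawson--Li framework'') without isolating the condition to be verified; that is the one place where the paper actually does work and your proposal does not.

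The substantive problem is in your non-explosion step. You claim that the division-death contribution $D_{n,\gamma}$ is bounded by $a_1h_{n,\gamma}+a_2$ using $\theta^\gamma+(1-\theta)^\gamma\le 2$ and Assumption \ref{ass_A}~$iv)$. Writing it out with the test function $h_{n,\gamma}+1$, that inequality only yields
$$D_{n,\gamma}(x)\le \bigl(r(x)-q(x)\bigr)\bigl(h_{n,\gamma}(x)+1\bigr)\le\bigl(r_1x^\gamma+r_2\bigr)\bigl(h_{n,\gamma}(x)+1\bigr),$$
which is of order $x^{2\gamma}$, not $a_1h_{n,\gamma}(x)+a_2$, whenever $r_1>0$ and $\gamma>0$; your scalar Gronwall loop does not close. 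A linear bound would follow from the superadditivity $\theta^\gamma+(1-\theta)^\gamma\le1$, but that requires $\gamma\ge1$, whereas in the presence of the stable jumps one is forced to take $\gamma<1+\brho<1$ for $\lim_n\mathcal{G}h_{n,\gamma}$ to be finite, so the two requirements collide exactly in the regime the proposition is designed to cover. This is the delicate point the paper flags when it remarks that Conditions (1) and (3) of \cite{marguet2016uniform} are replaced by $iv)$, that ``the growth of the population is governed by $x\mapsto r(x)-q(x)$'' (see (2.5) in \cite[Lemma 2.5]{marguet2016uniform}), and that $\gamma\ge1$ is not needed for conservative fragmentations: the estimate has to be run as a coupled control of $\E[N_{t\wedge\tau_k}]$ and $\E[\langle Z_{t\wedge\tau_k},h_{n,\gamma}\rangle]$ exploiting the conservativity of the fragmentation, not as the single inequality you state. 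Either restrict to $\gamma=0$ in $iv)$ (bounded $r-q$), or rework this step along the lines of \cite[Lemma 2.5]{marguet2016uniform}.
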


The proof is a combination of \cite[Proposition 1]{palau2018branching} and \cite[Theorem 2.1]{marguet2016uniform} (see Appendix \ref{proof_ex_uni} for details). 
Note that we replaced Condition (1) and (3) of Assumption A in \cite{marguet2016uniform} by Condition {\it iv)} in Assumption \ref{ass_A}. A careful look at 
the proof of \cite[Theorem 2.1]{marguet2016uniform} (in particular (2.5) in \cite[Lemma 2.5]{marguet2016uniform}) shows that in our case, the growth of the 
population is governed by the function $x\mapsto r(x)-q(x)$ so that our condition is sufficient. Note also that in \cite{marguet2016uniform} the exponent 
$\gamma$ of Condition {\it iv)} in Assumption \ref{ass_A} is required to be greater than $1$ but this condition is not necessary for conservative fragmentation 
processes as considered here. 
For the sake of readability we will assume that all the processes under consideration in the sequel satisfy Assumption \ref{ass_A}, but we will not indicate it.\\

We will now investigate the long time behaviour of the infection in the cell population.
As we have explained in the introduction, the strategy to obtain information at the population level is to introduce an auxiliary process providing information on the 
behaviour of a `typical individual'. 
We will provide a general expression for this auxiliary process in Section \ref{sec:MTO}. It involves the mean number of cells in the population, which is not always accessible. 
The computation is however doable in some particular cases, which allows us to study the influence of 
the different parameters on the long time behaviour of the parasite infection (quantity of parasites in the cells, mean number of cells in the population and survival of the cell population), 
in particular the influence of the division strategy (division rate and law for the sharing of the parasites).

\section{Constant birth and death rates} \label{sec_ct_b_and_d}

In this section we assume that $r(\cdot)$ and $q(\cdot)$ are constant functions: the cell division rate and the natural death rate do not depend on the quantity of parasites. 
However, the quantity of parasites in a cell can reach infinity and kill the cell.

\subsection{Mean number of cells alive} \label{sec_mean_numb_cells}

Let us first consider that the quantity of parasites in a cell follows the SDE:
\begin{equation} \label{X_sans_sauts2} \mathfrak{X}_t =x + g\int_0^t \mathfrak{X}_sds+ \int_0^t\sqrt{2\sigma^2\mathfrak{X}_s^2}dB_s
+\int_0^t\int_0^{\mathfrak{X}_{s^-}}\int_{\mathbb{R}_+}zR(ds,dx,dz),
\end{equation}
where $g \geq 0,\sigma \geq 0 $, $x \geq 0$, $B$ is a standard Brownian motion and the Poisson measure $R$ has been defined in \eqref{X_sans_sauts}.
In this simple case, we are able to obtain an equivalent of the number of cells alive at a large time $t$. In particular, we will see how it depends on the way cells divide and share their parasites between their daughter cells.
In order to state the result, let us 
introduce the function 
\begin{equation*}\label{defLaplexp}
\hat{\kappa}(\lambda):= \lambda (g-\sigma^2) + \lambda^2 \sigma^2 + 2 r \left(\E[ \Theta^\lambda]-1\right),
\end{equation*}
for any $\lambda \in (\lambda^-, \infty)$, where 
\begin{equation*} \lambda^-:= \inf \{\lambda <0: \hat{\kappa}(\lambda)<\infty\}.\end{equation*}
The function $\hat{\kappa}$ is the Laplace exponent of a Lévy process (see the proof of Proposition \ref{CNS_ext_ps}), and is thus convex on $(\lambda^-, \infty)$.
Let \label{lambdamoinsm}
$$ \mathbf{m}:=\hat{\kappa}^\prime(0+)=
g-\sigma^2+ 2r\E\left[ \ln\Theta \right]$$ 
and denote by  $\hat{\tau}=\argmin_{(\lambda^-, 0)}\hat{\kappa}(\lambda)$ which is well-defined if $\lambda^-<0<\mathbf{m}$ because $\hat{\kappa}'$ is an 
increasing function.
Finally, we denote by $\mathfrak{C}_t$ the number of cells alive at time $t$.

\begin{prop}\label{CNS_ext_ps}
Assume that the dynamics of the quantity of parasites in a cell follows \eqref{X_sans_sauts2} and that $r(x) \equiv r>0$ and $q(x)\equiv q\geq 0$.
\begin{itemize}
\item[i)] If $\mathbf{m}<0$, then
for every $x>0$ there exists $0<c_1(x)<1$ such that
\begin{equation*}
\underset{t\rightarrow\infty}{\lim}e^{(q-r)t}\E_{\delta_{x}}\left[\mathfrak{C}_t\right]=c_1(x). 
\end{equation*}
\item[ii)] If $\mathbf{m}=0$ and $\lambda^-<0$,
then
for every $x>0$ there exists $c_2(x)>0$ such that
\begin{equation*}
\underset{t\rightarrow\infty}{\lim}\sqrt{t}e^{(q-r)t}\E_{\delta_{x}}\left[\mathfrak{C}_t\right]=c_2(x).  
\end{equation*}
\item[iii)] If $\mathbf{m}>0$, then
for every $x>0$ there exists $c_3(x)>0$ such that
\begin{equation*}
\underset{t\rightarrow\infty}{\lim}t^{\frac{3}{2}} e^{-\hat{\kappa}(\hat{\tau})t} e^{(q-r)t}\E_{\delta_{x}}\left[\mathfrak{C}_t\right]=c_3(x). 
\end{equation*}
\end{itemize}
\end{prop}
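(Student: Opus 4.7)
The plan is to reduce the statement to a first-passage estimate for a Lévy process with Laplace exponent $\hat\kappa$, and then to invoke the classical three-regime asymptotics for such a process. First, I would apply the Many-to-One formula developed in Section \ref{sec:MTO}. Since $r$ and $q$ are constant, the cell genealogy is a birth--death process independent of the parasite dynamics, and for any measurable $f:\bar{\mathbb{R}}_+\to\mathbb{R}_+$ one should get
\[
\E_{\delta_x}\!\Big[\sum_{u\in V_t} f(X_t^u)\Big]=e^{(r-q)t}\,\E_x\!\left[f(Y_t)\right],
\]
where $Y$ is the auxiliary (spine) process: between jumps it follows the same SDE \eqref{X_sans_sauts2} as $\mathfrak{X}$, while the division events, after size-biasing, occur at rate $2r$ and multiply $Y$ by an independent $\Theta\sim\kappa$. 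Choosing $f=\mathbf{1}_{[0,\infty)}$ will give the key identity $\E_{\delta_x}[\mathfrak{C}_t]=e^{(r-q)t}\,\P_x(Y_t<\infty)$, so it suffices to obtain the three-regime asymptotic for $\P_x(Y_t<\infty)$.

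Next I would identify the Lévy process hidden in $Y$. Itô's formula applied to $\log Y$, with the stable positive-jump contribution removed, produces the Lévy process
\[
\xi_t = (g-\sigma^2)t + \sqrt{2}\,\sigma\, B_t + \sum_{T_i\leq t}\log\Theta_i,
\]
with Laplace exponent exactly $\hat\kappa$, where $(T_i)$ are the division times of intensity $2r$. The stable positive jumps of $\mathfrak{X}$ only add upward log-jumps to $\log Y$ and are the sole mechanism through which $Y$ can reach $+\infty$. Using the compensation formula for the Poisson measure $R$ together with the framework of non-linear continuous-state branching processes of \cite{li2017general, companion}, to which the spine belongs in this constant-rate setting, one should establish that $\P_x(Y_t<\infty)$ is, up to constants depending on $x$ only, asymptotically equivalent as $t\to\infty$ to the probability $\P(\sup_{s\leq t}\xi_s\leq C)$ that $\xi$ stays below a fixed level.

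The three cases then follow from classical Lévy fluctuation theory. When $\mathbf{m}=\hat\kappa'(0+)<0$, $\xi$ drifts to $-\infty$ and $\P(\sup_{s\leq t}\xi_s\leq C)$ converges to a positive constant, giving $(i)$. When $\mathbf{m}=0$ and the two-sided moment condition $\lambda^-<0$ ensures that $\var(\xi_1)<\infty$, a Spitzer--Rogozin argument yields $\P(\sup_{s\leq t}\xi_s\leq C)\sim c\,t^{-1/2}$, giving $(ii)$. When $\mathbf{m}>0$, a Cramér change of measure at the minimizer $\hat\tau\in(\lambda^-,0)$ of $\hat\kappa$ makes $\xi$ mean-zero under the tilted law; a local central limit theorem combined with the one-sided exit identity then produces the $t^{-3/2}$ polynomial correction and the exponential factor $e^{\hat\kappa(\hat\tau)t}$, giving $(iii)$.

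I expect the main obstacle to be the second step: decoupling the stable positive jumps of $\mathfrak{X}$ from the Lévy backbone $\xi$, so as to reduce $\P_x(Y_t<\infty)$ to a one-sided exit problem for $\xi$. This is where the self-similar structure of the parasite SDE and the non-linear branching machinery of \cite{li2017general, companion} should enter, together with sharp control on the exponential functional of $\xi$ that governs the explosive upward excursions of $Y$.
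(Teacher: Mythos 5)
Your proposal follows essentially the same route as the paper: the Many-to-One formula reduces $\E_{\delta_x}[\mathfrak{C}_t]$ to $e^{(r-q)t}\P_x(Y_t<\infty)$ for the spine $Y$, which is recognized as a self-similar (stable) continuous-state branching process in the Lévy random environment $L_t=(g-\sigma^2)t+\sqrt{2\sigma^2}B_t+\sum_{T_i\le t}\ln\Theta_i$ with Laplace exponent $\hat\kappa$, and the three regimes then follow from the fluctuation theory of $L$. The only difference is that what you describe as the "main obstacle" (relating $\P_x(Y_t<\infty)$ to the exponential functional of $L$ and extracting the constant, $t^{-1/2}$ and $t^{-3/2}e^{\hat\kappa(\hat\tau)t}$ asymptotics) is exactly the content of \cite[Proposition 2.1]{palau2016asymptotic}, which the paper invokes directly rather than re-deriving.
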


Let us focus on the most interesting case when $r>q$. In this case, in absence of parasites, the cell population evolves as a supercritical Galton-Watson process and survives with probability $1-q/r$ (see \cite{athreya1972branching}). In the presence of parasites, in cases $i)$ and $ii)$ the mean number of cells alive at time $t$ goes to infinity with $t$. In case $iii)$ it depends on the value of $\hat{\kappa}(\hat{\tau})$. If
\begin{equation} \label{cond_death} 
\hat{\kappa}(\hat{\tau})+ r -q =\hat{\tau}(g-\sigma^2)+ \hat{\tau}^2 \sigma^2 +  r \left[2\int_0^1 \theta^{\hat{\tau}} \kappa(d\theta) -1\right]-q\leq 0,
\end{equation}
then the mean number of cells alive at time $t$ goes to $0$ when $t$ goes to infinity and thus the cell population is killed by the infection. Otherwise we have the same conclusion that for cases $i)$ and $ii)$.

\begin{cor}\label{Cor_CNS_ext_ps}
Under the assumptions of Proposition \ref{CNS_ext_ps}, for any $x> 0$,
\begin{itemize}
\item[i)] If $q>r$ or if $(\mathbf{m}>0 \text{ and }\hat{\kappa}(\hat{\tau}) + r- q \leq 0), $
$\lim_{t \to \infty} \E_{\delta_{x}}[\mathfrak{C}_t]=0$.
\item[ii)] If $(\mathbf{m}\leq 0\text{ and }r>q)$ or if $(\mathbf{m}>0 \text{ and }\hat{\kappa}(\hat{\tau}) + r- q >0),$
then $ \lim_{t \to \infty} \E_{\delta_{x}}[\mathfrak{C}_t]=\infty$.
\end{itemize}
\end{cor}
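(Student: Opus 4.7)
The corollary is essentially a book-keeping consequence of Proposition \ref{CNS_ext_ps}. The plan is to translate the three asymptotic equivalents of that proposition into the limit of $\E_{\delta_{x}}[\mathfrak{C}_t]$ itself (rather than $\E_{\delta_{x}}[\mathfrak{C}_t]$ multiplied by an explicit factor), and then group the resulting cases according to the sign of the exponent that remains.

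First I would note an elementary convexity fact about $\hat{\kappa}$ that makes the dichotomy clean: since $\hat{\kappa}$ is convex on $(\lambda^-,\infty)$ with $\hat{\kappa}(0)=0$ and $\hat{\kappa}'(0+)=\mathbf{m}$, whenever $\mathbf{m}>0$ and $\lambda^-<0$ the minimiser $\hat{\tau}$ lies strictly in $(\lambda^-,0)$ and satisfies $\hat{\kappa}(\hat{\tau})<\hat{\kappa}(0)=0$. In particular, if additionally $q\geq r$, then automatically $\hat{\kappa}(\hat{\tau})+r-q<0$, which is why the corollary does not need to state this extra condition when $q>r$.

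Next I would run through the two statements case by case. For part (i): if $q>r$, I split according to the sign of $\mathbf{m}$ and invoke the relevant item of Proposition \ref{CNS_ext_ps} to obtain
\begin{equation*}
\E_{\delta_{x}}[\mathfrak{C}_t]\ \sim\ c_1(x)\,e^{(r-q)t},\quad c_2(x)\frac{e^{(r-q)t}}{\sqrt{t}},\quad \text{or}\quad c_3(x)\frac{e^{(\hat{\kappa}(\hat{\tau})+r-q)t}}{t^{3/2}},
\end{equation*}
each of which tends to $0$ (using the convexity remark above for the third asymptotic). If instead $\mathbf{m}>0$ and $\hat{\kappa}(\hat{\tau})+r-q\leq 0$, item iii) of Proposition \ref{CNS_ext_ps} directly gives the third asymptotic, whose exponential factor is non-positive while the polynomial factor $t^{-3/2}$ sends the expectation to $0$ (with the case of equality handled by the polynomial decay alone).

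For part (ii), the argument is symmetric. When $\mathbf{m}\leq 0$ and $r>q$, items i) and ii) of Proposition \ref{CNS_ext_ps} give an equivalent of the form $c_1(x)e^{(r-q)t}$ or $c_2(x)e^{(r-q)t}/\sqrt{t}$, both tending to $+\infty$. When $\mathbf{m}>0$ and $\hat{\kappa}(\hat{\tau})+r-q>0$, item iii) produces $c_3(x)e^{(\hat{\kappa}(\hat{\tau})+r-q)t}/t^{3/2}\to +\infty$. I would then point out that the two parts exhaust all parameter regimes, so no further work is needed. There is no real obstacle here: the only subtle point is the convexity observation ensuring $\hat{\kappa}(\hat{\tau})<0$ whenever $\mathbf{m}>0$, which makes the statement of (i) in the case $q>r$ consistent with item iii) of the proposition.
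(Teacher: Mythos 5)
Your argument is correct and is essentially the same bookkeeping the paper performs: it reads off the three asymptotic equivalents of Proposition \ref{CNS_ext_ps} and checks the sign of the surviving exponent in each regime, with your convexity observation ($\hat{\kappa}$ convex, $\hat{\kappa}(0)=0$, $\hat{\kappa}'(0+)=\mathbf{m}>0$ forcing $\hat{\kappa}(\hat{\tau})<0$) correctly handling the case $q>r$, $\mathbf{m}>0$ that the paper leaves implicit. The only slip is your closing remark that the two parts exhaust all parameter regimes: the boundary case $q=r$ with $\mathbf{m}\leq 0$ belongs to neither hypothesis, but this does not affect the validity of the proof of the statement as written.
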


Let us now consider a more general dynamics for the quantity of parasites in a cell, namely, the dynamics given by \eqref{X_sans_sauts} with the assumption 
that $g(x)\leq g x$ with $g\geq 0$ for any $x \in \R_+$, and a particular form of the diffusive part.
Notice that in this result, the natural death rate of the cells may depend on the quantity of parasites they contain but has to be bounded.
The diffusive term however may take a rather general form.
Then we may obtain the following sufficient condition for the mean number of cells to go to infinity.
\begin{prop} \label{CS_ext_ps}
Assume that the dynamics of the quantity of parasites in a cell follows \eqref{X_sans_sauts}, with $r(x) \equiv r>q \geq q(x)$, $p(x) = x$, 
$g(x)\leq g x$ and $\sigma(x)^2 = \mathfrak{s}^2(x) x + \sigma^2x^2$ for any $x \in \bar{\R}_+$ with $g,\sigma \in \R_+$.
Assume that the function $\mathfrak{s}$ is H\"older continuous with index $1/2$ on compact sets and that there exists a finite constant $\mathfrak{c}$ such that
for $x \geq 0$, $\mathfrak{s}(x)\sqrt{x} \leq \mathfrak{c} \vee x^\mathfrak{c}$.
If $\mathbf{m}\leq 0$ or $\mathbf{m}>0$ and 
  \begin{equation} \label{cond_pop_infinie} \hat{\kappa}(\hat{\tau})+r-q
= \hat{\tau} (g-\sigma^2)+ \hat{\tau}^2 \sigma^2 +  r \left[2\int_0^1 \theta^{\hat{\tau}} \kappa(d\theta) -1\right]-q >0,
 \end{equation}
then for any $x> 0$
$$\lim_{t \to \infty} \E_{\delta_{x}}[\mathfrak{C}_t] = \infty. $$
\end{prop}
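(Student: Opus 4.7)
The plan is to reduce the statement to the already-established case of Proposition \ref{CNS_ext_ps} via a Many-to-One argument followed by a comparison of auxiliary processes. First I would apply the Many-to-One formula developed in Section \ref{sec:MTO}. Since the division rate is the constant $r$, this identity reads
\[
\E_{\delta_x}[\mathfrak{C}_t] = \E_x\left[\mathbf{1}_{\{Y_t<\infty\}}\exp\left(\int_0^t (r-q(Y_s))\,ds\right)\right],
\]
where $Y$ is the auxiliary (spinal) process: between jumps it solves the SDE \eqref{X_sans_sauts} with the coefficients of the proposition, and at rate $r$ its value is multiplied by an independent random variable $\Theta\sim\kappa$. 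Using $q(x)\leq q$, this yields the lower bound
\[
\E_{\delta_x}[\mathfrak{C}_t]\geq e^{(r-q)t}\,\P_x(Y_t<\infty),
\]
so it is enough to prove that $e^{(r-q)t}\P_x(Y_t<\infty)\to\infty$.

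Next I would compare $Y$ with the spine $\hat{Y}$ of the simpler model of Proposition \ref{CNS_ext_ps}, obtained by setting $\hat{g}(x)=gx$, $\hat{\sigma}^2(x)=\sigma^2x^2$, $\hat{p}\equiv 0$, while keeping the same stable jumps and the same multiplicative division jumps. Three features justify such a comparison: (a) $g(x)\leq gx$, so $Y$ has a smaller drift than $\hat{Y}$; (b) the extra diffusive contribution $\mathfrak{s}^2(x)x$ is sub-quadratic thanks to $\mathfrak{s}(x)\sqrt{x}\leq \mathfrak{c}\vee x^{\mathfrak{c}}$ and hence contributes only a lower-order correction to the drift of $\log Y$; (c) the compensated jumps driven by $p(x)=x$ contribute to $\log Y$ a mean-zero martingale whose quadratic variation is controlled by $\int(z\wedge z^2)\pi(dz)<\infty$ (Assumption \ref{ass_A} iii)). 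Applying It\^{o}'s formula to $\log Y$ produces a semi-martingale decomposition that differs from that of $\log\hat{Y}$, a L\'{e}vy process with Laplace exponent $\hat{\kappa}$, only by these negligible terms; standard martingale arguments should then yield, for some positive constant $c$,
\[
\P_x(Y_t<\infty)\geq c\,\P_x(\hat{Y}_t<\infty),
\]
up to sub-exponential factors in $t$.

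Combining the two bounds gives $\E_{\delta_x}[\mathfrak{C}_t]\geq c\,e^{(r-q)t}\,\P_x(\hat{Y}_t<\infty)$, and the right-hand side is, by Many-to-One applied to the simpler model, proportional to $\E_{\delta_x}[\hat{\mathfrak{C}}_t]$. By Corollary \ref{Cor_CNS_ext_ps}, the latter tends to infinity under the hypotheses of Proposition \ref{CS_ext_ps}, completing the argument. The main obstacle will be the second step: making the comparison sufficiently quantitative, especially in the regime $\mathbf{m}>0$ where $\P_x(\hat{Y}_t<\infty)$ decays exponentially at the rate $|\hat{\kappa}(\hat{\tau})|$ and the excess $\hat{\kappa}(\hat{\tau})+r-q>0$ can be arbitrarily small. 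A crude estimate of the compensated jumps or of the extra diffusion could consume this margin. The key technical ingredients will therefore be a careful Burkholder--Davis--Gundy-type bound on the jump martingale contribution to $\log Y$, combined with a precise analysis of the sub-linear drift correction coming from $\mathfrak{s}^2(x)x$, both crucially relying on the growth assumptions imposed on $\mathfrak{s}$ and on $\pi$.
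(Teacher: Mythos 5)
Your overall skeleton matches the paper's: reduce to a lower bound $\E_{\delta_x}[\mathfrak{C}_t]\geq e^{(r-q)t}\P_x(Y_t<\infty)$ via Many-to-One and the bound $q(\cdot)\leq q$, then compare the spine $Y$ with the spine of the simpler model of Proposition \ref{CNS_ext_ps} and invoke the asymptotics of non-explosion probabilities for self-similar CSBPs in a L\'evy environment. The gap is in the comparison step, which is the heart of the proof and which your sketch does not actually establish. The event $\{Y_t<\infty\}$ is governed by the accumulation of the stable jumps (the $R$-term), and its probability is a functional of the whole environment path, of the form $\E\bigl[\exp\bigl(-x\bigl(\mathfrak{b}c_\mathfrak{b}\int_0^te^{-\mathfrak{b}L_s}ds\bigr)^{-1/\mathfrak{b}}\bigr)\bigr]$; a semimartingale decomposition of $\log Y$ with a BDG bound on the extra martingale parts does not convert into a bound on this quantity (and on the explosion event $\log Y$ is $+\infty$, so It\^o on $\log Y$ is only valid pre-explosion, making the argument circular). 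Moreover your claim (b) that the extra diffusion $\mathfrak{s}^2(x)x$ gives a ``lower-order correction to the drift of $\log Y$'' is not justified by the hypothesis $\mathfrak{s}(x)\sqrt{x}\leq\mathfrak{c}\vee x^{\mathfrak{c}}$: the induced drift of $\log Y$ is $-\mathfrak{s}^2(x)/x$, bounded only by $(\mathfrak{c}^2\vee x^{2\mathfrak{c}})/x^2$, which is unbounded both as $x\to 0$ and, when $\mathfrak{c}>1$, as $x\to\infty$. And as you yourself note, any multiplicative loss in the regime $\mathbf{m}>0$ can consume the margin $\hat{\kappa}(\hat{\tau})+r-q>0$, so ``up to sub-exponential factors'' is precisely the statement that needs proof.

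The paper avoids all of this by working with Laplace transforms rather than with $\log Y$. It applies It\^o's formula to $\exp\bigl(-Y_se^{-L_s}v_t(s,\lambda,L)\bigr)$, where $v_t$ solves the backward equation $\partial_sv_t=e^{L_s}\psi_0(e^{-L_s}v_t)$ with branching mechanism $\psi_0(\lambda)=c_\mathfrak{b}\lambda^{1+\mathfrak{b}}+\int_0^\infty(e^{-\lambda z}-1+\lambda z)\pi(dz)$. Two exact monotonicities then replace your approximate comparison: the extra diffusion $\mathfrak{s}^2(Y_u)Y_u$ contributes a \emph{nonnegative} drift to this exponential functional (the growth condition on $\mathfrak{s}$ is used only to show the local martingale is a true martingale, via boundedness of $e^{-y}(\mathfrak{c}^2\vee y^{2\mathfrak{c}})$), giving $\E_x[e^{-\lambda Y_te^{-L_t}}]\geq\E[e^{-xv_t(0,\lambda,L)}]$ with no loss; and the $\pi$-jumps satisfy $\psi_0(\lambda)>c_\mathfrak{b}\lambda^{1+\mathfrak{b}}$ pointwise, which yields $v_t(0,\lambda,L)\leq(\lambda^{-\mathfrak{b}}+\mathfrak{b}c_\mathfrak{b}\int_0^te^{-\mathfrak{b}L_s}ds)^{-1/\mathfrak{b}}$, again with no loss. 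Letting $\lambda\to 0$ gives exactly $\P_x(Y_t<\infty)\geq\P_x(\hat Y_t<\infty)$ (constant $1$, not just ``some $c>0$ up to sub-exponential factors''), after which \cite[Proposition 2.1]{palau2016asymptotic} applies. If you want to salvage your route, you should replace the $\log Y$ comparison by this Laplace-transform comparison; as written, the central step is missing.
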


Let us try to understand the effects of $g$, $r$, $q$ and the probability distribution $\kappa$ on cell population survival by studying under which conditions on 
these parameters the inequality \eqref{cond_death} is satisfied.
The effects of $g$, $q$ and $r$ are easy to study. As for any $\lambda \leq 0$,
$$ 2\int_0^1 \theta^{\lambda} \kappa(d\theta) -1 \geq  2\int_0^1  \kappa(d\theta) -1= 1,$$
the higher $r$ is and the more difficult is \eqref{cond_death} to satisfy. If cells divide more often, they are more numerous and they 
get rid of some of their parasites more often.
Hence their quantity is less likely to reach infinity. Similarly, for negative values of $\lambda$, $\hat{\kappa}(\lambda)$ 
is decreasing in $g$, so that if $g$ is bigger, condition \eqref{cond_death} is more likely to hold. This is consistent with the fact that the quantity of parasites 
explodes sooner if it grows faster. Finally, condition \eqref{cond_death} 
is less likely to hold for large values of $q$, which is consistent with the fact that the number of cells alive decreases when the natural death rate $q$ increases.
The effect of $\kappa$, which describes the sharing of the parasites at division, is less intuitive and explicit computations are not always feasible. We nevertheless are able to study some particular cases.\\

For the sake of simplicity, we assume in Sections \ref{sec_unif_law} and \ref{sec_unequal_sharing} that the quantity of parasites in a cell follows the SDE 
\eqref{X_sans_sauts2} with $\sigma=0$. We make this simplification to obtain simple expressions as the focus is here on the role
of $\kappa$ on the mean number of cells alive but general conditions including $\sigma$ could be derived in a similar way.

\subsubsection{Uniform law or equal sharing} \label{sec_unif_law}
If $\kappa(d\theta)=d\theta$ or $\kappa(d\theta)=\delta_{1/2}(d\theta)$, we can explicit the bounds of Corollary \ref{Cor_CNS_ext_ps}.

\begin{cor}\label{Cor_CNS_ext_ps_unif}
Assume that the quantity of parasites in a cell follows the SDE \eqref{X_sans_sauts2} with $\sigma=0$, that $r(x) \equiv r>q(x)\equiv q\geq 0$, and take $x>0$. 
\begin{itemize}
\item[-]If $\kappa(d\theta)=d\theta$,
$$
\begin{array}{llll}
\text{i) }&\text{ if }\quad g\geq  3r-q+2\sqrt{2r(r-q)}\quad &\text{ then }& \quad \lim_{t \to \infty} \E_{\delta_x}[\mathfrak{C}_t] = 0,\\
\text{ii) }&\text{ if }\quad g<  3r-q+2\sqrt{2r(r-q)}\quad &\text{ then }& \quad \lim_{t \to \infty} \E_{\delta_x}[\mathfrak{C}_t] = \infty.\\
\end{array}$$ 
\item[-]If $\kappa(d\theta)=\delta_{1/2}(d\theta)$,
$$\begin{array}{llll}
\text{i) }&\text{ if }\quad g\geq   x_0(r,q)\ln 2\quad &\text{ then }& \quad \lim_{t \to \infty} \E_{\delta_x}[\mathfrak{C}_t] = 0,\\
\text{ii) }&\text{ if }\quad g< x_0(r,q)\ln 2\quad &\text{ then }& \quad \lim_{t \to \infty} \E_{\delta_x}[\mathfrak{C}_t] = \infty,\\
\end{array}$$
where $x_0(r,q)>2r$ is the unique value such that
$$x_0(r,q) =  (r+q)(1+\ln 2r- \ln \left( x_0(r,q)  \right))^{-1}.$$
\end{itemize}
\end{cor}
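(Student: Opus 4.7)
The plan is to apply Corollary \ref{Cor_CNS_ext_ps} in the simplified setting $\sigma=0$, where
$\hat{\kappa}(\lambda)= \lambda g + 2r(\E[\Theta^\lambda]-1)$ and $\mathbf{m}=g+2r\E[\ln\Theta]$. Since $r>q$, that corollary reduces the statement to verifying two things for each of the two laws $\kappa$: (a) identify for which $g$ one has $\mathbf{m}>0$; (b) in that regime, compute $\hat{\tau}$ and $\hat{\kappa}(\hat{\tau})$ explicitly, and solve the inequality $\hat{\kappa}(\hat{\tau})+r-q\leq 0$. On the complementary region one of the two sub-cases of case $ii)$ in Corollary \ref{Cor_CNS_ext_ps} (either $\mathbf{m}\leq 0$, or $\mathbf{m}>0$ with $\hat{\kappa}(\hat{\tau})+r-q>0$) must produce $\lim \E_{\delta_x}[\mathfrak{C}_t]=\infty$, and this is the only place where the two branches of Corollary \ref{Cor_CNS_ext_ps} must be recombined.

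For $\kappa(d\theta)=d\theta$, I would first note $\E[\Theta^\lambda]=1/(\lambda+1)$ on $(-1,\infty)$, so $\lambda^-=-1$ and $\mathbf{m}=g-2r$. The equation $\hat{\kappa}'(\lambda)=g-2r/(\lambda+1)^2=0$ has a unique root in $(-1,0)$ exactly when $g>2r$, namely $\hat{\tau}=\sqrt{2r/g}-1$. A direct substitution gives $\hat{\kappa}(\hat{\tau})=2\sqrt{2rg}-g-2r$, so the killing inequality becomes $2\sqrt{2rg}\leq g+r+q$, which, with $u=\sqrt{g}$, is the quadratic $u^2-2\sqrt{2r}\,u+(r+q)\geq 0$ with discriminant $4(r-q)\geq 0$ and roots $\sqrt{2r}\pm\sqrt{r-q}$. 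The larger root squared is $3r-q+2\sqrt{2r(r-q)}$; I would then observe that the smaller root squared, $3r-q-2\sqrt{2r(r-q)}$, is strictly smaller than $2r$ (equivalent to the trivial inequality $(r-q)^2<8r(r-q)$), so the ``lower'' branch of the quadratic inequality lies entirely inside the region $\mathbf{m}\leq 0$ and is absorbed by case $ii)$ of Corollary \ref{Cor_CNS_ext_ps}. This leaves $g=3r-q+2\sqrt{2r(r-q)}$ as the single critical value.

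For $\kappa(d\theta)=\delta_{1/2}(d\theta)$, I would use $\E[\Theta^\lambda]=2^{-\lambda}$ and $\mathbf{m}=g-2r\ln 2$; solving $\hat{\kappa}'(\hat{\tau})=g-2r\ln 2\cdot 2^{-\hat{\tau}}=0$ gives $\hat{\tau}=\log_2(2r\ln 2/g)$, which is negative exactly when $g>2r\ln 2$. Setting $x:=g/\ln 2$ and substituting, a short computation yields $\hat{\kappa}(\hat{\tau})=x\ln(2r/x)+x-2r$, so the killing inequality $\hat{\kappa}(\hat{\tau})+r-q\leq 0$ reads $f(x)\leq r+q$ with $f(x):=x(1+\ln(2r/x))$. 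Since $f'(x)=\ln(2r/x)$ vanishes at $x=2r$, $f$ is strictly concave with maximum $f(2r)=2r>r+q$ and tends to $-\infty$, hence strictly decreasing on $(2r,\infty)$. Therefore a unique $x_0(r,q)>2r$ satisfies $f(x_0)=r+q$, and rearranging this identity gives precisely the implicit equation in the statement. The dichotomy around $g=x_0\ln 2$ follows from this monotonicity, and for $g\leq 2r\ln 2$ one has $\mathbf{m}\leq 0$ and lands directly in case $ii)$ of Corollary \ref{Cor_CNS_ext_ps}, closing the gap.

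The main obstacle is bookkeeping rather than analytic depth: one must simultaneously monitor the two defining conditions of case $i)$ of Corollary \ref{Cor_CNS_ext_ps} (both $\mathbf{m}>0$ and the killing inequality) and verify that the complementary region is covered either by $\mathbf{m}\leq 0$ or by the strict reverse inequality $\hat{\kappa}(\hat{\tau})+r-q>0$. In the uniform case this reduces to discarding the ``lower'' root of a quadratic as lying below the $\mathbf{m}=0$ boundary; in the equal-sharing case it reduces to the strict monotonicity of $f$ on $(2r,\infty)$. Once these are in place, the two explicit dichotomies follow by direct substitution.
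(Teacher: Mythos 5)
Your proposal is correct and follows essentially the same route as the paper: both reduce the statement to Corollary \ref{Cor_CNS_ext_ps} by computing $\mathbf{m}$, $\hat{\tau}$ and $\hat{\kappa}(\hat{\tau})$ explicitly for each law, then resolving the inequality $\hat{\kappa}(\hat{\tau})+r-q\leq 0$ (the paper solves the quadratic in $g$ directly rather than in $\sqrt{g}$, and likewise discards the lower root by noting it lies below $2r$, while your treatment of the equal-sharing case via the monotonicity of $\varphi$ on $(2r,\infty)$ is identical to the paper's). No gaps.
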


\begin{rem}
 We can prove that the `uniform sharing' strategy  is always better than the `equal sharing' strategy for the cell population. Indeed 
 by Proposition \ref{CNS_ext_ps}, recalling that $\mathbf{m} = g -2r$ in the first case, and $\mathbf{m}= g  -2r\ln 2$ in the second one,
we obtain
$$
\begin{array}{|c|c|c|}
\hline
g & \multicolumn{2}{c|}{\text{Asymptotic order of magnitude of }\E_{\delta_x}[\mathfrak{C}_t] }\\
\cline{2-3} & \kappa(d\theta) = d\theta & \kappa(d\theta)=\delta_{1/2}(d\theta)\\
\hline
\left[0, 2r\ln 2\right)& e^{(r-q)t} & e^{(r-q)t}\\
\hline
\left\{ 2r\ln 2\right\} & e^{(r-q)t} & t^{-1/2}e^{(r-q)t} \\
\hline
( 2r\ln 2, 2r) & e^{(r-q)t} & t^{-3/2}e^{(\hat\kappa_2(\hat\tau_2) +r-q)t} \\
\hline
\left\{2r\right\} & t^{-1/2}e^{(r-q)t} & t^{-3/2}e^{(\hat\kappa_2(\hat\tau_2) +r-q)t}\\
\hline
( 2r,+\infty) & t^{-3/2}e^{(\hat\kappa_1(\hat\tau_1) +r-q)t} & t^{-3/2}e^{(\hat\kappa_2(\hat\tau_2) +r-q)t}\\
\hline
\end{array}
$$
where for all $g> 2r$, $$\hat\kappa_1(\hat\tau_1) = 2\sqrt{2rg}-g-2r>\hat\kappa_2(\hat\tau_2) 
= \frac{g}{\ln 2} \left(1+\ln 2r- \ln \left( \frac{g}{\ln 2}  \right)\right)-2r.$$
\end{rem}

More generally, we expect that a more unequal strategy is always beneficial for the cell population: `sacrificing' some lineages in order to 
save the other ones'. We 
were not able to prove such a general statement, but we
will try to understand better the effect of unequal sharing in the next section.
 
\subsubsection{Unequal sharing}  \label{sec_unequal_sharing}
We assume that there exists $\theta_0\in (0,1/2)$ such that $$\kappa(d\theta) = \frac{1}{2}\delta_{\theta_0}(d\theta) + \frac{1}{2}\delta_{1-\theta_0}(d\theta).$$ 
In this case, we have $\lambda^-=-\infty$ and for $\lambda \in \R$,
$$\hat{\kappa}(\lambda)=\lambda g + r \left[\theta_0^{\lambda}  + (1-\theta_0)^{\lambda}-2\right],\quad \hat{\kappa}'(\lambda)= g + r  \left[\ln(\theta_0)\theta_0^{\lambda}  + \ln(1-\theta_0)(1-\theta_0)^{\lambda}\right]$$
and
 $$ \mathbf{m}=
g +   r\ln( \theta_0(1-\theta_0) ).$$
Let us focus on the case where $\mathbf{m}>0$, that is to say $g/r>-\ln( \theta_0(1-\theta_0) )$. Then the unique minimum of $\hat{\kappa}$ on $\R$ 
is reached at a point $\hat{\tau}$ characterized by
$$ \frac{g}{r} +  \left[\ln(\theta_0)\theta_0^{\hat{\tau}}  + \ln(1-\theta_0)(1-\theta_0)^{\hat{\tau}}\right]= 0.$$
We notice that $\hat{\tau}$ depends only on $g/r$ and $\theta_0$. 
 Thus the mean number of cells goes to $0$ if the two following conditions are satisfied: 
$$ g/r>-\ln( \theta_0(1-\theta_0) ) \quad \text{and} \quad \hat{\kappa}(\hat{\tau},g,r) +r-q \leq 0, $$
or equivalently
$$ g/r>-\ln( \theta_0(1-\theta_0) )\quad \text{and} \quad \hat{\tau}\frac{g}{r}+\theta_0^{\lambda}  + (1-\theta_0)^{\lambda} -1-q/r \leq 0. $$

\begin{figure}
\center
\includegraphics[scale=0.2]{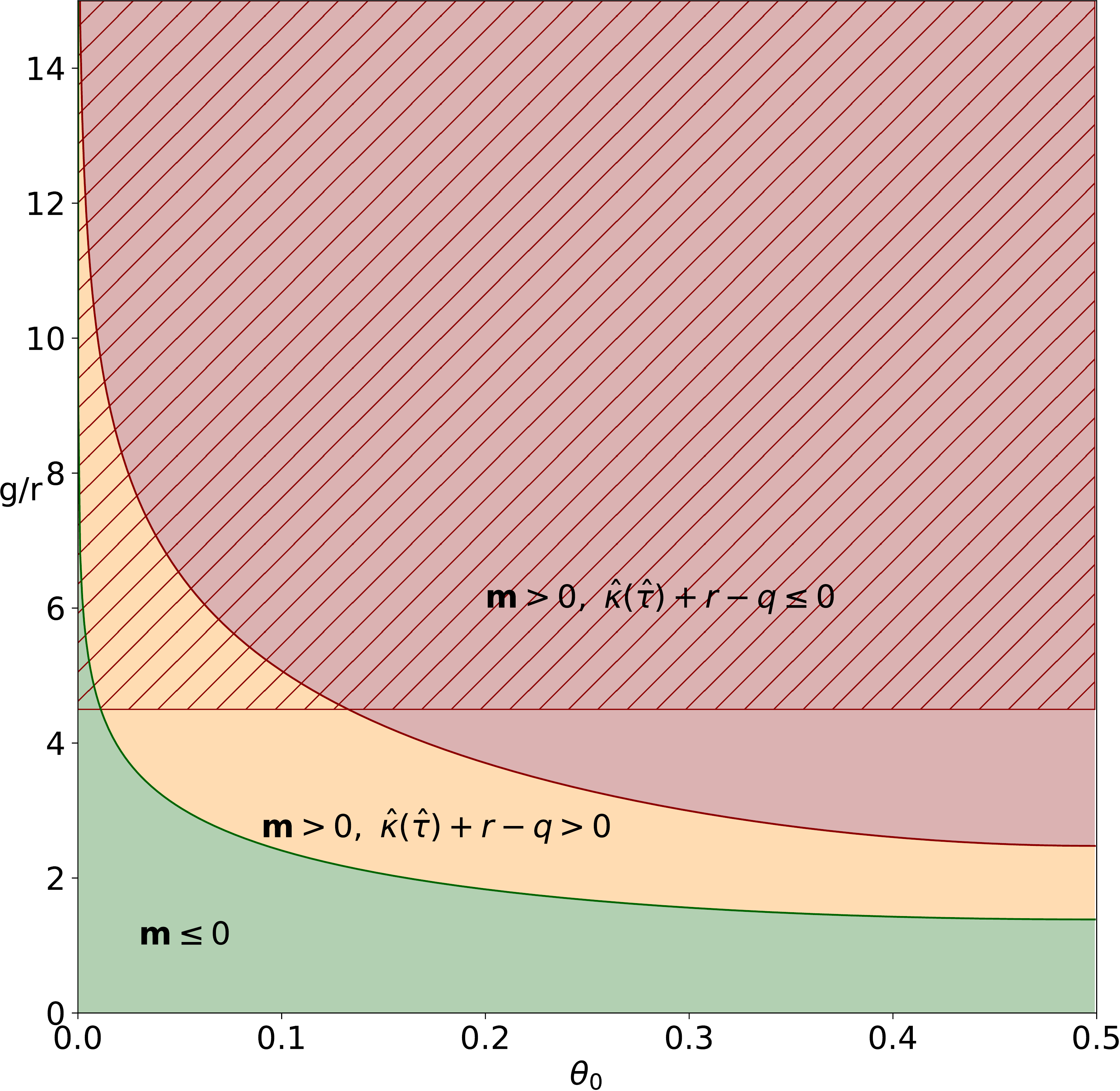}
\caption{Classification of the couples of parameters $(g/r, \theta_0)$ in the case of unequal sharing with $q=r/2$. For a set of 
parameters in the green area, the mean number of cells goes to infinity when time goes to infinity. It is also the case for a set of parameters in the 
orange area but at a smaller rate. For a set of parameters in the red area, the mean number of cells goes to $0$. The hatched area 
corresponds to the 
values of $g/r$ for which the mean number of cells goes to $0$ in the case of a uniform repartition of the parasites at division.}
\label{fig:param_extinction}
\end{figure}
On Figure \ref{fig:param_extinction}, we show the correspondence between the long time behaviour of the mean cell population size and the values of 
$(g/r,\theta_0)$ in the case $q=r/2$. Interestingly, there are values of $g/r$ for which the fate of the cell 
population depends on the strategy of the parasites sharing at division: very asymmetrical divisions (small $\theta_0$) 
can save the cell population. The shapes of the different areas are essentially the same for different values of $q/r$.

\subsection{Quantity of parasites in the cells} \label{section_beta}

We now consider that the dynamics of the parasites in a cell follows the SDE \eqref{X_sans_sauts} without the stable positive jumps, that is to say
\begin{align} \label{X_sans_sauts_stables} \mathfrak{X}_t =x + \int_0^t g(\mathfrak{X}_s)ds
+\int_0^t\sqrt{2\sigma^2 (\mathfrak{X}_s)}dB_s &+
\int_0^t\int_0^{p(\mathfrak{X}_{s^-})}\int_{\mathbb{R}_+}z\widetilde{Q}(ds,dx,dz).
\end{align}

In this case we can observe moderate infections, extinctions of the parasites in the cell population, but also cases where the quantity of parasites goes to infinity with 
an exponential growth in a positive fraction of the cells.\\

In order to state the next result, we need to introduce three assumptions. The first one allows to make couplings and we believe that it can be weakened.

\begin{ass} \label{ass_D}
The measure $\pi$ satisfies $ \int_{\R_+} z\pi(dz)<\infty.$
\end{ass}
Note that the weaker condition $ \int_{\R_+} \ln(1+z)\pi(dz)<\infty $, which is required in \cite{companion}, is therefore satisfied under Assumption \ref{ass_D}.
The second assumption provides a condition under which the quantity of parasites may reach the state $0$. It is almost a necessary and sufficient condition (see  \cite[Remark 3.2 and Theorem 3.3]{companion}).

\begin{enumerate}[label=\bf{(LN0)}]
\item\label{A2}  There exist $0<a<1,$ $\eta>0$ and $u_0> 0$ such that for all $u\leq u_0$
\begin{equation*}
\frac{g(u)}{u}-a\frac{\sigma^2(u)}{u^2} \leq -\ln(u^{-1}) \left(\ln\ln(u^{-1})\right)^{1+\eta}.
\end{equation*}
\end{enumerate}
The third assumption ensures that the process does not explode in finite time almost surely (see \cite[Theorem 4.1]{companion}).
\begin{enumerate}[label=\bf{(SN$\infty$)}]
\item\label{SNinfinity}  There exist $0<a<1,$ and a nonnegative function $f$ on $\mathbb{R}_+$ such that
\begin{equation*}
\frac{g(u)}{u}-a\frac{\sigma^2(u)}{u^2}-r(u)\frac{1-\E\left[\Theta^{1-a}\right]}{1-a}-p(u)I_a(u) =-f(u)+o(\ln u),\quad (u\rightarrow +\infty).
\end{equation*}
\end{enumerate}

Recall that the total number of cells is given by a continuous time birth and death process with individual birth rate $r$ and individual death rate $q$. 
From classical results on branching processes (see for instance \cite{athreya1972branching}) we know that the cell population survives with probability
$0 \vee (1-q/r)$.
The long time behaviour for the quantity of parasites in the cells is described in the next proposition.

\begin{prop} \label{prop:beta_ct:g_var}
Assume that the quantity of parasites in a cell follows the SDE \eqref{X_sans_sauts_stables}, that Assumption \ref{ass_D} holds, and that $r(x) \equiv r>q \equiv q(x)\geq 0$.
\begin{itemize}
 \item[i)] If there exists $\eta>0$ such that for $x \geq 0$,  $$\frac{g(x)}{x}+ 2 r \E[\ln \Theta]> \eta,$$
if the function $x \mapsto (\sigma^2(x)+p(x))/x$ is bounded and if there exists $\eps_1>0$ such that $$\int_{\R_+}z \ln^{1+\eps_1}(1+z)\pi(dz)<\infty,$$
 then for $\eps>0$,
\begin{align*}
\liminf_{t\rightarrow \infty} \mathbb{E}\left[ \mathbf{1}_{\{ N_t\geq 1 \}} \frac{\#\lbrace u\in V_t:X_t^u >e^{(\eta/2r-\eps)t}\rbrace}
{N_t} \right] > 0.
\end{align*} 
\item[ii)] If Assumption \ref{A2} holds and if there exists $\eta>0$ such that for all $x \geq 0$, $$\frac{g(x)}{x}+ 2 r \E[\ln \Theta]<- \eta,$$ then for $\eps>0$ 
$$ \lim_{t\rightarrow \infty}\mathbf{1}_{\{ N_t\geq 1 \}}\frac{\#\lbrace u\in V_t:X_t^u >\eps \rbrace}
{N_t} = 0 \quad \text{in probability}.
 $$
\item [iii)] If Assumptions \ref{A2} and \ref{SNinfinity} hold and if there exist $\eta>0$ and $x_0 \geq 0$ such that for $x \geq x_0$,  
$$\frac{g(x)}{x} - \frac{\sigma^2(x)}{x^2}+ 2 r \E[\ln \Theta]-p(x)\int_0^\infty\left(\frac{z}{x}-\ln\left(1+\frac{z}{x}\right)\right)\pi(dz)<- \eta, $$
then
\begin{equation*}
\lim_{t\rightarrow \infty}\mathbf{1}_{\{ N_t\geq 1 \}}\frac{\#\lbrace u\in V_t: X_t^u >0\rbrace}
{N_t} = 0 \quad a.s.
\end{equation*}
 \end{itemize}
\end{prop}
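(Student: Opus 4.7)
The plan is to reduce each of the three statements to a long-time property of the auxiliary (spinal) process $Y$ via the Many-to-One formula of Section \ref{sec:MTO}. Since here $r$ and $q$ are constant, the cell genealogy is an autonomous linear birth-death process independent of the parasite dynamics and $\E[N_t]=e^{(r-q)t}$, so the formula takes the clean form
$$\E_{\delta_x}\!\left[\sum_{u\in V_t}f(X_t^u)\right] = e^{(r-q)t}\,\E_x[f(Y_t)],$$
where $Y$ evolves as in \eqref{X_sans_sauts_stables} between ``spinal'' jumps and, at each such jump (occurring at rate $2r$), is multiplied by an independent random variable of law $\kappa$. Assumption \ref{ass_D} ensures the integrability needed to make this spinal jump structure well defined, and on the non-extinction event the Kesten--Stigum theorem provides $N_t e^{-(r-q)t}\to W$ a.s.\ with $W>0$, which will be used to pass from expectations to statements (either in probability or almost sure) about fractions.

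For part (i) I would apply It\^o's formula to $\ln Y_t$ on $\{Y_t>0\}$. The infinitesimal drift reads $g(Y)/Y-\sigma^2(Y)/Y^2+p(Y)\!\int(\ln(1+z/Y)-z/Y)\pi(dz)+2r\,\E[\ln\Theta]$. The boundedness of $(\sigma^2(x)+p(x))/x$ uniformly in $x$ controls the continuous-martingale and compensated jump contributions, while the reinforced integrability $\int z\ln^{1+\eps_1}(1+z)\pi(dz)<\infty$ provides the quadratic-variation control needed for a law of large numbers on $\ln Y_t/t$. Under the hypothesis $g(x)/x+2r\,\E[\ln\Theta]>\eta$ this yields $\liminf_t \ln Y_t/t>0$ with probability bounded away from zero, giving $\P_x(Y_t>e^{(\eta/(2r)-\eps)t})\geq c>0$ for $t$ large. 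Combining with the Many-to-One identity and the Kesten--Stigum convergence produces the announced positive liminf of the fraction.

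For parts (ii) and (iii) the opposite inequality on $g(x)/x+2r\,\E[\ln\Theta]$ drives $\ln Y$ to $-\infty$. Assumption \ref{A2} is precisely the hypothesis of \cite{companion} guaranteeing that the continuous-state non-linear branching process $Y$ can reach $0$; combined with the negative drift it forces $Y_t\to 0$ in probability, hence $\P_x(Y_t>\eps)\to 0$ for every $\eps>0$. Writing
$$\E_{\delta_x}[\#\{u\in V_t:X_t^u>\eps\}]=e^{(r-q)t}\,\P_x(Y_t>\eps),$$
dividing by $N_t\sim We^{(r-q)t}$ on $\{N_t\geq 1\}$ and applying a conditional Markov inequality yields the convergence in probability claimed in (ii). For (iii), Assumption \ref{SNinfinity} rules out explosion of $Y$, and the sharper drift condition involving the L\'evy correction $\int(z/x-\ln(1+z/x))\pi(dz)$ upgrades the mere convergence $\P_x(Y_t>0)\to 0$ to a quantitative geometric decay $\P_x(Y_t>0)\leq Ce^{-\alpha t}$ with $\alpha>r-q$. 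A Borel--Cantelli argument along a geometric subsequence of times, together with the absorbing character of $0$ for the parasite dynamics in each cell, then promotes this to the almost sure vanishing of the fraction.

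The main obstacle is precisely the quantitative decay rate in (iii): obtaining $\P_x(Y_t>0)\leq Ce^{-\alpha t}$ with $\alpha$ strictly larger than the population growth rate $r-q$, rather than the bare $\P_x(Y_t>0)\to 0$ that suffices for (ii). This is the raison d'\^etre of the strengthened drift hypothesis and of \ref{SNinfinity}; technically the bound is obtained through exponential-martingale estimates for $Y$ in the spirit of \cite{li2017general,companion}. A secondary, more routine, subtlety common to all three parts is the careful handling of the cell-population extinction event, which is where the indicator $\mathbf{1}_{\{N_t\geq 1\}}$ enters and where the Kesten--Stigum dichotomy is used.
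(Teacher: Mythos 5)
Your reduction to the auxiliary process and the treatment of part \emph{ii)} match the paper's proof in substance. The serious problem is in part \emph{iii)}. You propose to upgrade the convergence to an almost sure statement by establishing a quantitative decay $\P_x(Y_t>0)\leq Ce^{-\alpha t}$ with $\alpha>r-q$ and then applying Borel--Cantelli. But the hypothesis of \emph{iii)} only asserts that the drift of $\ln Y$ is eventually below $-\eta$ for \emph{some} $\eta>0$; nothing ties $\eta$ to the population growth rate $r-q$, so a decay rate exceeding $r-q$ cannot follow from the stated assumptions (and the result you would be invoking from \cite{companion}, namely (6.3) of Theorem 6.2, only yields $\P(Y_t\neq 0)\to 0$ with no rate). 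The paper's route is entirely different and does not need any rate: it first gets the convergence in probability exactly as in \emph{ii)}, and then upgrades it to almost sure convergence by the argument of \cite[Theorem 4.2(i)]{BT11}, which exploits the fact that $0$ is absorbing and hereditary (all descendants of a parasite-free cell remain parasite-free, so the infected cells at time $t+s$ are descendants of the infected cells at time $t$), together with a uniform-in-time control of the fraction of a birth-death forest descending from a small subfamily. Since the BT11 lemma is stated for Yule processes, the paper proves the needed variant for birth-death processes (Lemma \ref{lemtechnq} in Appendix \ref{App_lemtechnq}). Without either that hereditary/uniform-control argument or a genuinely proved super-$(r-q)$ decay rate, your part \emph{iii)} does not close.

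Two smaller points. In part \emph{i)}, the final step ``combine with Kesten--Stigum'' is not enough as stated: from $\liminf_t\P_x\bigl(Y_t>e^{(\eta/2r-\eps)t}\bigr)>0$ and the Many-to-One formula you get a lower bound on $\E_{\delta_x}\bigl[\,\#\{u:X_t^u>e^{(\eta/2r-\eps)t}\}\,e^{-(r-q)t}\bigr]$, but the quantity to bound is the expectation of a \emph{ratio} with random denominator $N_t$ correlated with the numerator; almost sure convergence of $N_te^{-(r-q)t}$ does not by itself transfer the bound. The paper closes this with a Cauchy--Schwarz inequality together with the convergence of $\E_{\delta_x}\bigl[(N_te^{-(r-q)t})^2\bigr]$ (Lemma \ref{lem_esp_Nt2} with $\alpha=0$), and obtains the positive probability of exponential growth of $Y$ not from a bare law of large numbers for $\ln Y_t/t$ but from \cite[Corollary 6.4 iii)]{companion} (a comparison with a L\'evy process $\Lambda_{\rho(t)}$ with drift $\eta/2r$), which handles the possibility that $Y$ is absorbed at $0$. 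Part \emph{ii)} as you present it is essentially the paper's argument.
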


Proposition \ref{prop:beta_ct:g_var} extends Theorem 4.2 in \cite{BT11} 
allowing for non constant parasite's growth rates, a general class of diffusive functions, positive jumps, as well as the possibility for the cells 
to die at a constant rate.

\section{Linear division rate, constant death rate}\label{sec:LDCD}

In this section, we consider the case of a linear division rate and a constant natural death rate for the cells, and a constant growth for the parasites.
 
\begin{customass}{\bf{LDCG}} \label{ass_E}
There are no stable jumps ($c_\mathfrak{b}=0$),
there exist $\alpha,\beta>0$, $\ g,\ q \geq 0$ such that
$g(x) = gx$, $q(x) \equiv q$, $r(x) = \alpha x+\beta$, $\max(g,\beta)>q$ and
\begin{equation} \label{cond_moment2_pi}
 \int_0^\infty (z \vee z^2)\pi(dz)<\infty.
\end{equation}
\end{customass}

Such a division rate corresponds for the cells to a strategy of linear increase of their 
division rate in order to get rid of the parasites. 
From Lemma \ref{lem_esp_Nt2}, we see that
the quantity $\max(g,\beta)-q$ is the Malthusian growth rate of the population. Therefore, we only consider the case of a growing population.
In this case, we can show that the infection stays moderate, and the population may even recover under some assumptions, under a condition on $p$ and a condition on the  behaviour of $\sigma$ and $p$ at infinity.

\begin{ass} \label{Ass_E'}
 For every $x \geq 0$, $ xp'(x)\geq p(x).$
\end{ass}

\begin{ass} \label{eq:limsup_sigma}
\begin{equation*}
\limsup_{x \to \infty} \frac{\sigma^2(x)}{x^2}<\infty\quad\text{and }\quad \limsup_{x \to \infty} \frac{p(x)}{x^2}<\infty.
\end{equation*}
\end{ass}
Assumption \ref{eq:limsup_sigma} ensures that the quantity of parasites in a typical cell is brought back to small values thanks to division events (see Lemma \ref{lemma:Lyapunov_min}).
We state in the following proposition the possible long time behaviours for the infection.

\begin{prop}\label{prop:temps_long_auxi}
Under Condition \ref{ass_E}, suppose that Assumptions \ref{Ass_E'} and \ref{eq:limsup_sigma} hold.
\begin{itemize}
 \item[i)]  For all $\varepsilon>0$ and $x\geq 0$,
\begin{align*}
\lim_{K\rightarrow \infty}\lim_{t\rightarrow \infty} \mathbb{P}_{\delta_x}
\left( \frac{\#\lbrace u\in V_t: X_t^u >K\rbrace} {\E_{\delta_x}[N_t]}>\eps \right) = 0.
\end{align*} 
If moreover $\int_{\R_+}z^6\pi(dz)<\infty$, then 
\begin{align*}
\lim_{K\rightarrow \infty}\lim_{t\rightarrow \infty} \mathbb{P}_{\delta_x}
\left( \mathbf{1}_{\{N_t \geq 1\}} \frac{\#\lbrace u\in V_t: X_t^u >K\rbrace}{N_t}>\eps \right) = 0.
\end{align*}
\item [ii)] If $\int_{\R_+}z^6\pi(dz)<\infty$ and \ref{A2} holds then
% if $(\alpha>0)$ or $(g+2\beta\E[\ln(\Theta)]<0)$,
\begin{equation*}
\mathbf{1}_{\{N_t \geq 1\}}\frac{\#\lbrace u\in V_t: X_t^u >0\rbrace} {N_t} \to 0 \quad \text{in} \quad \mathbb{L}_2(\delta_{x_0}), \quad (t\rightarrow \infty).
\end{equation*}
 \end{itemize}
\end{prop}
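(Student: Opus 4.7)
The plan is to reduce the population-level statements to properties of the auxiliary (spine) process $Y$ via the Many-to-One formula that will be established in Section~\ref{sec:MTO}: for every nonnegative measurable $f$,
\begin{equation*}
\mathbb{E}_{\delta_x}\!\left[\sum_{u\in V_t} f(X_t^u)\right] = \mathbb{E}_{\delta_x}[N_t]\,\mathbb{E}_x[f(Y_t)].
\end{equation*}
Under Condition~\ref{ass_E} the division rate $r(x)=\alpha x+\beta$ is linear, so $Y$ inherits an extra size-biased drift whose intensity grows linearly in $x$; Assumptions~\ref{Ass_E'} and~\ref{eq:limsup_sigma} ensure that this pull toward small values dominates the at-most-quadratic contributions of $\sigma^2$ and $p$, which is precisely the content of Lemma~\ref{lemma:Lyapunov_min}.

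For the first assertion of~(i), Markov's inequality applied to $f=\mathbf{1}_{(K,\infty)}$ in the Many-to-One formula yields
\begin{equation*}
\mathbb{P}_{\delta_x}\!\left(\frac{\#\{u\in V_t: X_t^u>K\}}{\mathbb{E}_{\delta_x}[N_t]}>\varepsilon\right)\leq\frac{1}{\varepsilon}\,\mathbb{P}_x(Y_t>K),
\end{equation*}
so the claim reduces to showing that $(Y_t)_{t\geq 0}$ is tight. By Lemma~\ref{lemma:Lyapunov_min} there exists a Lyapunov function $V$ with $V(x)\to\infty$ and $\sup_{t\geq 0}\mathbb{E}_x[V(Y_t)]<\infty$; letting $t\to\infty$ and then $K\to\infty$ closes the argument.

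For the second assertion of~(i) and for~(ii), I would use the decomposition
\begin{equation*}
\mathbf{1}_{\{N_t\geq 1\}}\frac{\#\{u\in V_t: X_t^u>K\}}{N_t}=\frac{\#\{u\in V_t: X_t^u>K\}}{\mathbb{E}_{\delta_x}[N_t]}\cdot\frac{\mathbb{E}_{\delta_x}[N_t]}{N_t}\mathbf{1}_{\{N_t\geq 1\}},
\end{equation*}
and then control $\mathbb{E}_{\delta_x}[N_t]/N_t$ in probability on $\{N_t\geq 1\}$ by a Many-to-Two computation of $\mathbb{E}[N_t^2]$, yielding an estimate $\mathbb{E}[N_t^2]\leq C\,\mathbb{E}[N_t]^2$ uniformly in $t$ and therefore an $L^2$-lower bound on $N_t/\mathbb{E}[N_t]$ on the non-extinction event. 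The moment condition $\int z^6\pi(dz)<\infty$ enters at this stage: the Many-to-Two integrand involves $r(Y_s)$, hence $Y_s$, and bounding $\mathbb{E}_x[Y_s^k]$ uniformly in $s$ by a polynomial Lyapunov function requires a matching moment of the jump measure~$\pi$. For~(ii), the elementary inequality $u^2\leq u$ for $u\in[0,1]$ reduces the $L^2$-claim to its $L^1$-counterpart,
\begin{equation*}
\mathbb{E}_{\delta_{x_0}}\!\left[\!\left(\mathbf{1}_{\{N_t\geq 1\}}\frac{\#\{u\in V_t: X_t^u>0\}}{N_t}\right)^{\!2}\right]\leq\mathbb{E}_{\delta_{x_0}}\!\left[\mathbf{1}_{\{N_t\geq 1\}}\frac{\#\{u\in V_t: X_t^u>0\}}{N_t}\right],
\end{equation*}
which is handled by combining the decomposition above with the fact that, under Assumption~\ref{A2}, the spine is absorbed at~$0$ and therefore $\mathbb{P}_x(Y_t>0)\to 0$ as $t\to\infty$; this is obtained by adapting the extinction criterion of~\cite{companion} to the time-inhomogeneous spine produced by the linear division rate.

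The main obstacle is the passage from the deterministic denominator $\mathbb{E}_{\delta_x}[N_t]$ to the random $N_t$. Because $r$ depends on the parasite load, $(N_t)$ is not a standard Galton-Watson process: its offspring mechanism is coupled with the trait dynamics, so the usual $L^2$-convergence of $N_t/e^{\lambda t}$ cannot be read off classical results and must be derived through the Many-to-Two formula with uniform-in-$t$ moment bounds on $Y$. Carrying these bounds out, in particular verifying that $N_t/\mathbb{E}_{\delta_x}[N_t]$ converges to a non-degenerate limit on non-extinction, is the technically delicate step and is the reason for the high-moment hypothesis on~$\pi$.
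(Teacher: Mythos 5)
Your treatment of the first limit in (i) is fine and coincides with the paper's: Many-to-One plus Markov reduces it to tightness of the (time-inhomogeneous) spine $Y_t^{(t)}$, which follows from the Foster--Lyapunov inequality of Lemma \ref{lemma:Lyapunov_min}. The problem is in the step you yourself flag as delicate, namely passing from the deterministic normalisation $\E_{\delta_x}[N_t]$ to the random one $N_t$. Your decomposition requires, for every $\delta'>0$, that $\limsup_{t}\P_{\delta_x}\left(1\leq N_t<\delta\,\E_{\delta_x}[N_t]\right)<\delta'$ for $\delta$ small, i.e.\ that on the survival event $N_t$ is of the order of its mean with probability close to one. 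A uniform bound $\E[N_t^2]\leq C\,\E[N_t]^2$ does not deliver this: by Paley--Zygmund it only yields a \emph{lower} bound on $\P(N_t\geq c\,\E[N_t])$, and even granting $L^2$-convergence of $N_t/\E_{\delta_x}[N_t]$ to some limit $W$ with $\E[W]=1$, nothing in your argument rules out $\P(W=0,\,N_t\geq 1\ \forall t)>0$. For a classical birth--death process this identification of $\{W=0\}$ with extinction is standard, but here $(N_t)$ is coupled to the trait through the linear division rate, and this is precisely the point that cannot be read off a second-moment computation. (Note also that $N_t/m(x,0,t)$ is not even a martingale, since $m(x,0,t)$ is a combination of two exponentials rather than a single one, cf.\ \eqref{mxst}.)

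The paper avoids this issue entirely: instead of your decomposition it invokes the $\mathbb{L}_2$ law of large numbers of \cite{marguet2017law} (Theorem \ref{thm:conv_auxi}), which directly bounds $\E_{\delta_{x_0}}\bigl[\mathbf{1}_{\{N_t\geq 1\}}\bigl|N_t^{-1}\sum_{u\in V_t}F(X_t^u)-\E_{x_1}[F(Y_t^{(t)})]\bigr|^2\bigr]$ without ever controlling $N_t/\E[N_t]$ from below on survival. The hypotheses of that theorem are where your ingredients actually get used: the Lyapunov/minorization conditions (Lemma \ref{lemma:Lyapunov_min}), the uniform fifth-moment bound on the spine (this, not a Many-to-Two formula, is where $\int_{\R_+}z^6\pi(dz)<\infty$ enters), and the comparison $\E[N_t^2]\sim C\,\E[N_t]^2$ of Lemma \ref{lem_esp_Nt2}. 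With Theorem \ref{thm:conv_auxi} in hand, the second limit in (i) follows by combining the $L^2$ estimate with $\P_{x_1}(Y_t^{(t)}>K)\to 0$, and (ii) follows by applying it to $F=\mathbf{1}_{\{\cdot>0\}}$ once one has shown $\P_x(\exists t,\ Y_t^{(t)}=0)=1$; for the latter the paper constructs an explicit time-homogeneous dominating process $\hat Y$ and adapts the absorption criterion of \cite{companion}, which is a substantive coupling argument rather than a routine adaptation. So your overall architecture is workable only if you replace the ratio decomposition by (or prove) a genuine law of large numbers with random normalisation; as written, the argument has a gap at its central step.
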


Notice that point {\it ii)} covers the classical diffusive function ($\sigma^2(x)=\sigma^2 x$, $\sigma>0$).
Proposition \ref{prop:temps_long_auxi} extends the results of \cite{BT11} to a class of division rates increasing with 
the quantity of parasites. It is similar in spirit to \cite[Conjecture 5.2]{BT11} in the case of birth rates increasing with 
the quantity of parasites, but Bansaye and Tran considered a case where the division rate is bounded, which is not our case. 
Moreover, we consider positive jumps and various diffusive functions for the growth of the parasites, and add the possibility for the cells to die.

From this result, we see that the proportion of very infected cells goes to $0$ as $t$ tends to infinity so that a linear division rate is sufficient 
to contain the infection, and even recover if the dynamics of the parasites in a cell is such that the probability of absorption of the infection process 
is positive (condition \ref{A2}). Note that the division mechanism allows to contain the spread of the infection, either by a linear division rate 
(with $\alpha>0$ according to Proposition \ref{prop:temps_long_auxi}) and no restriction on the sharing of the parasites at division, 
or by a constant division rate $\beta$, large enough compared to the 
growth of the parasites weighted by the division events $g/(2\E[\ln(1/\Theta)])$ (see {\it ii)} and {\it iii)} of Proposition \ref{prop:beta_ct:g_var}).

\section{Containment or explosion of the infection in more general cases} \label{sec_constant_diff}

In this section, we consider more general cell division and death rates, and we look for sufficient conditions for the quantity of parasites to 
become large (resp. small) in every alive cell. The idea is to find a function characterizing the parasite growth rate in a typical cell and to 
compare it to the growth rate of the population size.

Let us be more precise on the assumptions entailing these long time behaviours. For $a \in \R_+ \setminus \{1\}$, we introduce when it is well-defined the function $G_a$, for $x>0$ via
\begin{align*}
G_a(x) : =& (a-1)\frac{g(x)}{x}-a(a-1)\frac{\sigma^2(x)}{x^2}-2r(x)\E[\Theta^{1-a}-1] \nonumber \\
& -x^{-\beta}\int_{\mathbb{R}_+} \left(\left(z+1\right)^{1-a}-1\right)\rho(dz)-p(x)\int_{\mathbb{R}_+} \left(\left(\frac{z}{x}+1\right)^{1-a}-1-(1-a)\frac{z}{x}\right)\pi(dz).
\end{align*}

As we explained in the introduction, we will consider a spinal process giving information on the trait dynamics of a typical individual (see the Section \ref{section_constr_aux} for details). As we will see in the proof of Proposition \ref{prop_extin_auxi}, if we denote by $Y$ this spinal process,
then the process $(Y_t^{1-a}e^{\int_0^t G_a(Y_s)ds}, t \geq 0)$ is a local martingale, which entails that, $Y_t^{1-a}$ roughly behaves as 
$e^{-\int_0^tG_a(Y_s)ds}$ and thus, $G_a$ contains informations on the dynamics of the quantity of parasites in a typical cell. 
Moreover, the growth rate of a cell population with a constant quantity $x$ of parasites is $r(x)-q(x)$. The next assumptions combine conditions on 
those two key quantities, leading to results on the asymptotic behaviour of the infection in the entire population.
\begin{customass}{\bf{EXPL}} \label{ass_expl_tps_fini}
 There exist $a>1$ such that $\E[\Theta^{1-a}]<\infty$ and $0\leq \gamma <\gamma'$ such that 
 $$r(x)-q(x) \leq  \gamma  <\gamma' \leq G_{a}(x), \quad \forall x \geq 0. $$
\end{customass}
\begin{customass}{\bf{EXT}} \label{ass_abs_tps_fini}
There is no stable jumps (that is to say $c_\mathfrak{b}=0$) and there exist $a<1$ and $0\leq \gamma <\gamma'$ such that 
$$r(x)-q(x) \leq  \gamma  <\gamma' \leq G_{a}(x), \quad \forall x \geq 0. $$
\end{customass}
\begin{prop} \label{prop_constant_diff}
Let $K>0$. Then for every $x>0$,
\begin{itemize}
  \item[i)] Under Assumption \ref{ass_expl_tps_fini},
  $$ \lim_{t \to \infty} \P_{\delta_x}\left( \exists u \in V_t, X_t^u \leq K \right) =0. $$
  \item[ii)] Under Assumption \ref{ass_abs_tps_fini},
  $$ \lim_{t \to \infty} \P_{\delta_x}\left( \exists u \in V_t, X_t^u \geq 1/K \right) =0. $$
 \end{itemize}
\end{prop}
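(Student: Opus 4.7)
The approach is a first-moment union bound built on the many-to-one formula of Section \ref{sec:MTO}, combined with a Lyapunov estimate on the auxiliary process $Y$ using the power function $\phi_a(y) := y^{1-a}$. Let $\mathcal{L}$ denote the generator of $Y$. For any nonnegative measurable $f$, the many-to-one formula
\begin{equation*}
\E_{\delta_x}\Bigl[\sum_{u \in V_t} f(X_t^u)\Bigr] = \E_{\delta_x}[N_t]\,\E_x[f(Y_t)]
\end{equation*}
together with a union bound and Markov's inequality yields, for any $\eta > 0$,
\begin{equation*}
\P_{\delta_x}\bigl(\exists u \in V_t : f(X_t^u) \geq \eta\bigr) \leq \eta^{-1}\,\E_{\delta_x}[N_t]\,\E_x[f(Y_t)].
\end{equation*}

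I would then control the two factors separately. Since each cell disappears or is replaced at net rate $r(X_t^u) - q(X_t^u) \leq \gamma$, the master equation gives $\tfrac{d}{dt}\E_{\delta_x}[N_t] \leq \gamma\,\E_{\delta_x}[N_t]$ and hence $\E_{\delta_x}[N_t] \leq e^{\gamma t}$ by Gronwall. For the moment bound on $Y$, a direct generator computation (the same one already performed with $\ln$ in the proof of Proposition \ref{prop_extin_auxi}, now carried out on power functions) yields $\mathcal{L}\phi_a(y) = -G_a(y)\phi_a(y)$, so that
\begin{equation*}
M_t := Y_t^{1-a}\exp\!\Bigl(\int_0^t G_a(Y_s)\,ds\Bigr)
\end{equation*}
is a nonnegative local martingale started from $x^{1-a}$. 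A localization argument combined with Fatou's lemma and the lower bound $G_a \geq \gamma'$ then yields the supermartingale estimate
\begin{equation*}
\E_x[Y_t^{1-a}] \leq x^{1-a}\,e^{-\gamma' t}.
\end{equation*}

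For part (i), $a > 1$ so $\phi_a$ is decreasing and $\{X_t^u \leq K\} = \{(X_t^u)^{1-a} \geq K^{1-a}\}$; applying the above with $\eta = K^{1-a}$ gives
\begin{equation*}
\P_{\delta_x}\bigl(\exists u \in V_t : X_t^u \leq K\bigr) \leq K^{a-1}\,x^{1-a}\,e^{(\gamma - \gamma')t} \longrightarrow 0
\end{equation*}
as $t \to \infty$, since $\gamma < \gamma'$. Part (ii) is entirely symmetric: $a < 1$, $\phi_a$ is increasing, and applying the bound with $\eta = K^{a-1}$ yields $\P_{\delta_x}(\exists u \in V_t : X_t^u \geq 1/K) \leq K^{1-a}x^{1-a}e^{(\gamma-\gamma')t} \to 0$.

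The main obstacle is promoting the local martingale property of $M_t$ to the supermartingale estimate above. For part (i), $\phi_a$ is singular at $0$, but the lower bound $G_a \geq \gamma' > 0$ pushes $Y$ away from the origin and that is what makes the localization work. For part (ii), $\phi_a$ grows at infinity, and the hypothesis $c_\brho = 0$ in Assumption \ref{ass_abs_tps_fini} is essential: the heavy tails of the stable jumps would otherwise obstruct the integrability of $Y_t^{1-a}$ when $a < 1$. The standing moment condition $\int_{\R_+}(z \wedge z^2)\pi(dz) < \infty$ is what ensures the compensated jump integral appearing in $G_a$ is finite uniformly in $y$.
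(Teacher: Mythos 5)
Your overall strategy --- a Markov/union bound, a many-to-one comparison, and the local martingale $Y_t^{1-a}e^{\int_0^t G_a(Y_s)ds}$ --- is exactly the one the paper uses (Lemmas \ref{maj_mto} and \ref{lem_gene}), but two of your steps have genuine gaps.

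First, the factorization $\E_{\delta_x}[\sum_{u\in V_t}f(X_t^u)]=\E_{\delta_x}[N_t]\,\E_x[f(Y_t)]$ with the \emph{time-homogeneous} process $Y$ of \eqref{SDE_Y_diff_ct} is false in the setting of Proposition \ref{prop_constant_diff}: it holds only when $r-q$ is constant (Section \ref{sec_auxi_ct}). Under Assumptions \ref{ass_expl_tps_fini} and \ref{ass_abs_tps_fini} the rates are general, the genuine auxiliary process is the time-inhomogeneous $Y^{(t)}$ whose generator involves the unknown ratios $m(\theta x,s,t)/m(x,s,t)$, and your generator identity $\mathcal{A}\phi_a=-G_a\phi_a$ is only valid for the homogeneous $Y$. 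What is true --- and is the content of Lemma \ref{maj_mto}, proved by comparing the semigroup equations directly and using $r(x)-q(x)\leq\gamma$ to absorb the mass term --- is the one-sided inequality $\E_{\delta_x}[\sum_{u\in V_t}f(X_t^u)]\leq e^{\gamma t}\E_x[f(Y_t)]$. You need that comparison lemma; ``equality times $\E_{\delta_x}[N_t]\leq e^{\gamma t}$'' does not yield it.

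Second, the estimate $\E_x[Y_t^{1-a}]\leq x^{1-a}e^{-\gamma' t}$ for the \emph{untruncated} power is precisely where the difficulty lies, and you name the obstacle without removing it. For $a>1$ the function is infinite at $0$, so the estimate is meaningful only once one knows $\P_y(\tau^-(0)<\infty)=0$; the paper obtains this not from the heuristic ``$G_a\geq\gamma'$ pushes $Y$ away from the origin'' but from a coupling of $Y$ with a process having bounded division rate on $[0,\tau^+(K)]$ together with an appeal to \cite[Theorem 3.3 i)]{companion} (checking that the modified $\tilde G_a$ stays above $\gamma'+o(\ln u)$ near $0$); symmetrically, part ii) uses non-explosion via \cite[Theorem 4.1 i)]{companion}. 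In addition, promoting the local martingale to a supermartingale requires controlling the stopped process at the exit times: a fragmentation jump can overshoot the level $1/n$, so $M_{\tau_n}$ is not bounded and its integrability rests on $\E[\Theta^{1-a}]<\infty$. The paper sidesteps all of these integrability issues by working throughout with the bounded truncations $(y\vee\zeta)^{1-a}$ (resp.\ $(y\wedge\zeta)^{1-a}$), restricting the martingale argument to the event $\{t\leq\tau^-(\eps)\wedge\tau^+(1/\eps)\}$, and disposing of the boundary events by dominated convergence as $\eps\to 0$. Your sketch would need all of this to become a proof.
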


Thus in case {\it i)}, the quantity of parasites goes to infinity in all the cells, and
in case {\it ii)}, the quantity of parasites goes to zero in all the cell lines with a probability close to one. \\

The rest of the paper is dedicated to the proofs of the results presented in previous sections. As mentioned before, the proofs 
rely on the construction of an auxiliary process, which gives information on the dynamics of the quantity of parasites in a `typical' cell, 
that is to say a cell chosen uniformly at random among the cells alive. 
But to have information on the long time behaviour of the infection at the population level, we need to derive additional results on the number of cells alive, 
which is not an easy task due to both the death rate and the dependence of the cell division rate in the quantity of parasites.

\section{Many-to-One formula}\label{sec:MTO}

\subsection{Construction of the auxiliary process} \label{section_constr_aux}

Recall from \eqref{Ztdirac} that the population state at time $t$, $Z_t$, can be represented by a sum of Dirac masses. 
 We denote by $(M_t,t\geq 0)$ the first-moment semi-group associated with the population 
process $Z$ given for all measurable functions $f$ and $x,t\geq 0$ by
$$
M_tf(x)=\mathbb{E}_{\delta_x}\left[\sum_{u\in V_t} f(X_t^u)\right].
$$ 
The trait of a typical individual in the population is characterized by the so-called 
auxiliary process $Y$ (see \cite[Theorem 3.1]{marguet2016uniform} for detailed computations and proofs). Its associated time-inhomogeneous 
semi-group is given for $r \leq s \leq t$, $x \geq 0$ by
\begin{equation*} 
P_{r,s}^{(t)}f(x)=\frac{M_{s-r}(fM_{t-s}\mathbf{1})(x)}{M_{t-r}\mathbf{1}(x)},
\end{equation*}
where $\mathbf{1}$ is the constant function on $\mathbb{R}_+$ equal to $1$.
More precisely, if we denote by $m(x,s,t)=M_{t-s}\mathbf{1}(x)$ the mean number of cells in the 
population at time $t$ starting 
from one individual with trait $x$ at time $s$ with $s\leq t$,
then, for all measurable bounded functions $F:\mathbb{D}([0,t],\mathbb{R}_+)\rightarrow\mathbb{R}$, we have:
\begin{equation}\label{eq:mto}
\E_{\delta_{x}}\left[\sum_{u\in V_{t}}F\left(X_{s}^{u},s\leq t\right)\right]=m(x,0,t)\E_{x}\left[F\left(Y_{s}^{(t)},s\leq t\right)\right].
\end{equation}
Here $(Y_{s}^{(t)}, s\leq t)$ is a time-inhomogeneous Markov process whose law is characterized by its associated 
infinitesimal generator 
$(\mathcal{A}_{s}^{(t)}, s\leq t)$ given for $f\in\mathcal{D}(\mathcal{A})$ and $x\geq 0$ by: 
\begin{align*}
\mathcal{A}_{s}^{(t)}f(x)= & \widehat{\mathcal{G}}_{s}^{(t)}f(x)
+2r(x)\int_0^1\left(f\left(\theta x\right)-f\left(x\right)\right)\frac{m(\theta x,s,t)}{m(x,s,t)}\kappa(d\theta),
\end{align*}
where 
\begin{align*}
\mathcal{D}(\mathcal{A})=\left\lbrace f\in\mathcal{C}_b^2(\mathbb{R}_+)\text{ s.t. } m(\cdot,s,t)f\in\mathcal{C}_b^2(\mathbb{R}_+),\ \forall t\geq 0,\ \forall s\leq t \right\rbrace
\end{align*}
and
\label{eq:auxi}
\begin{align*}
\widehat{\mathcal{G}}_{s}^{(t)}f(x)= & \left(g(x)+2\sigma^2(x)\frac{\partial_xm(x,s,t)}{m(x,s,t)}+ p(x)\int_{\mathbb{R}_+}z\left(\frac{m(x+z,s,t)-m(x,s,t)}{m(x,s,t)}\right)\pi(dz)\right)f'(x)\\
& +\sigma^2(x)f''(x) + p(x)\int_{\mathbb{R}_+}(f(x+z)-f(x)-zf'(x))\frac{m(x+z,s,t)}{m(x,s,t)}\pi(dz)\\
& +x\int_{\mathbb{R}_+}(f(x+z)-f(x))\frac{m(x+z,s,t)}{m(x,s,t)}\rho(dz).
\end{align*}

Those formulas come from \cite[Theorem 3.1]{marguet2016uniform}, with $B(x) = r(x)+q(x)$ and $m(x,A) = r(x)(r(x)+q(x))^{-1}\int_0^1\left(\delta_{\theta x}(dy)+\delta_{(1-\theta)x}(dy)\right)\kappa(d\theta)$.
Note that explicit expressions for the mean population size $m(x,s,t)$ are usually out of range.
However, the computations are doable in two particular cases.
In the first one, the difference between the cell division and death rates is constant and in the second one the parasites Malthusian 
growth is constant, the cell division rate is a linear function of the quantity of parasites and the cell death rate is constant, as stated in Assumption \ref{ass_E} (Linear Division Constant Growth).

\subsection{Role of the death rate in the auxiliary process}
In this section, we compare the auxiliary process associated to a population with or without death.
Let $(\tilde{Z}_t,t\geq 0)$ be the previously defined population process to which we add a trait $D_t^u$ to each individual $u$ in the population: 
if $D_t^u=0$, the individual is still alive, if $D_t^u=1$, the individual is dead. To compare the population dynamics with or without death, 
we consider that the trait of the dead individuals still evolves and that they can still divide but their 
descendants will be born with the status $D_t=1$. More precisely, 
$$
\tilde{Z}_t = \sum_{u\in V_t}\delta_{(X_t^u, D_t^u)} = \sum_{u\in V_t^0}\delta_{(X_t^u,0)} + \sum_{u\in V_t^1}\delta_{(X_t^u, 1)},
$$
where $V_t^0$ (respectively $V_t^1$) denotes the alive (respectively dead) individuals in the population at time $t$. 
We denote by $N_t^0$ (respectively $N_t^1$) its cardinal and introduce $\tX_t^u  =(X_t^u, D_t^u)$ for all $u\in \mathcal{U}$ and $t\geq 0$.  Next, we consider the following dynamics:
\begin{itemize}
\item[-] a death event for $u$ leads to set $D_t^u=1$. Therefore, it does not affect dead cells. 
\item[-] a division event does not change the status $D_t^u$ of an individual and its descendants inherit the status of their ancestor. 
\item[-] we extend the generator $\mathcal{G}$ to the functions $f:\bar{\mathbb{R}}_+\times \lbrace 0,1\rbrace \rightarrow \mathbb{R}_+$ such that $f(\cdot, 0), f(\cdot,1)\in\mathcal{C}_b^2(\bar{\mathbb{R}}_+)$. 
\end{itemize}
Then, $(\tilde{Z}_t,t\geq 0)$ is defined as the unique strong solution in $\mathcal{M}_P(\mathbb{R}_+\times\lbrace 0, 1\rbrace)$ to
\begin{multline*}
\langle \tilde{Z}_{t},f\rangle = f\left(x_{0}, 0\right)
+\int_{0}^{t}\int_{\mathbb{R}_+}\mathcal{G}f(\widetilde{x})\tilde{Z}_{s}\left(d\widetilde{x}\right)ds+M_{t}^f(x_0,0)\\
 +\int_{0}^{t}\int_{E}\mathbf{1}_{\left\{u\in V_{s^{-}}\right\}}
\left(  \mathbf{1}_{\left\{z\leq r(X_{s^{-}}^u)\right\}}\left(f\left(\theta X_{s^-}^u, D_s^u \right)+ f\left((1-\theta) X_{s^-}^u, D_s^u \right)-
f\left(X_{s^{-}}^{u}, D_s^u\right)\right)\right.\\
\left.+\mathbf{1}_{\left\{0<z-r(X_{s^{-}}^u)\leq q(X_{s^{-}}^u)\right\}}\left(f\left( X_{s^-}^u, 1 \right)- f\left(X_{s^{-}}^{u}, D_{s^{-}}^u \right)\right)\right)
 M\left(ds,du,d\theta,dz\right),
\end{multline*}
for all $f:\mathbb{R}_+\times \lbrace 0,1\rbrace \rightarrow \mathbb{R}_+$ such that $f(\cdot, 0), f(\cdot,1)\in\mathcal{C}_b^2(\mathbb{R}_+)$, 
where $M_\cdot^f$ is an $\tilde{\mathcal{F}}_t$ martingale ($\tilde{\mathcal{F}}_t$ denotes the canonical extension of $\mathcal{F}_t$).
Let $N_t = N_t^0 + N_t^1$. Introduce 
$$m_0(x,s,t):=\mathbb{E}\left[N^0_t \big| Z_s =\delta_{(x,0)}\right]$$
and consider the auxiliary process $\tY^{(t)}_s = (Y_s^{(t)}, D_s)$ for all $0\leq s\leq t$ and its associated generator 
$\widetilde{\mathcal{A}}_s^{(t)}$ given for all $\psi:\bar{\mathbb{R}}_+\times \lbrace 0,1\rbrace \rightarrow \mathbb{R}_+$ 
such that $\psi(\cdot,0),\psi(\cdot,1)\in\mathcal{D}(\mathcal{A})$, and for all $(x,d)\in\mathbb{R}_+\times \lbrace 0,1\rbrace$, by
\begin{align}\nonumber\label{eq:gene_2var}
\widetilde{\mathcal{A}}_s^{(t)}\psi(x,d) = \widehat{\mathcal{G}}_s^{(t)}\psi(\cdot, d)(x)&+ 2r(x)\int_0^1\left(\psi(\theta x,d)-\psi(x,d)\right)\frac{m((\theta x,d),s,t)}{m((x,d),s,t)}\kappa(d\theta)\\
& + q(x)\left(\psi(x,1)-\psi(x,d)\right).
\end{align}
Using the Many-to-One formula \eqref{eq:mto}, we get
\begin{align*}
m_0(x,s,t) = \mathbb{E}\left[\sum_{u\in V_t}\mathbf{1}_{\lbrace D_t^u = 0\rbrace}\Big| Z_s = \delta_{(x,0)}\right] = m((x,0),s,t)\mathbb{P}\left( D_t = 0\big|
\tY_s^{(t)}= (x,0)\right). 
\end{align*}
As we can see on the expression of the generator of the auxiliary process in \eqref{eq:gene_2var}, $D_s$ switches from $0$ to $1$ at rate $q(x)$ and 
$1$ is an absorbing state. Therefore, 
$$
\mathbb{P}\left( D_t = 0\big|\tY_s^{(t)}= (x,0)\right) = \mathbb{E}\left[\exp\left(-\int_s^t q(Y_u^{(t)})du\right)\big|Y_s^{(t)} = x\right].
$$
Finally,
\begin{align*}
m_0(x,s,t) = \mathbb{E}\left[\exp\left(-\int_s^t q(Y_u^{(t)})du\right)\big|Y_s^{(t)} = x\right]m((x,0),s,t),
\end{align*}
and in the case $q(x)\equiv q\geq 0$ for all $x\geq 0$, we get
$$
m_0(x,s,t) = e^{-q(t-s)}m((x,0),s,t).
$$
In particular, for all $x,y\geq 0$
\begin{align}\label{mxtbis}
\frac{m_0(y,s,t)}{m_0(x,s,t)}=\frac{m((y,0),s,t)}{m((x,0),s,t)}.
\end{align}
The expressions appearing in the generator of the auxiliary process given page \pageref{eq:auxi} are identical with or without death, but the difference might 
be hidden in the ratios of $m(y,s,t)/m(x,s,t)$. In \eqref{mxtbis}, the left-hand (respectively right-hand) side corresponds to the ratio 
appearing in the case of a population process with (respectively without) death. 
From the previous computations, we obtain that
in the case of a constant death rate, the auxiliary 
process $(Y_s^{(t)}, s \leq t)$ is the same as the auxiliary process of a population process without death and 
$$
\mathbb{E}\left[\sum_{u\in V_t^0} f(X_s^u)\Big| Z_r = \delta_x\right] = m_0(x,r,t)\mathbb{E}\left[f\left(Y_s^{u}\right)\big| Y_r^{(t)}=x\right]=e^{-q(t-r)}\mathbb{E}\left[\sum_{u\in V_t} f(X_s^u)\Big| Z_r = \delta_x\right].
$$

\subsection{The case $r(x)-q(x)\equiv r-q $} \label{sec_auxi_ct}

In the case where the cell population growth rate is constant, we have
\begin{align*}
m(x,s,t) = 1 + \int_s^t \mathbb{E}\left[\sum_{u\in V_v} \left(r(X_v^u)-q(X_v^u)\right)\Big|Z_s = \delta_x\right]dv = 1 + (r-q) \int_s^t m(x,s,v)dv.
\end{align*}
Therefore, 
$$
m(x,s,t) = e^{(r-q)(t-s)}. 
$$
In this case, the auxiliary process $Y$ is time-homogeneous and its infinitesimal generator is given for all $x\geq 0$ by
\begin{align*}
\mathcal{A}f(x) = \mathcal{G}f(x) + 2r(x)\int_0^1 (f(\theta x)-f(x))\kappa(d\theta).
\end{align*}

In particular, it can be realised as the unique 
strong solution to the following SDE. For all $t \geq 0$,
\begin{align}\nonumber \label{SDE_Y_diff_ct}
Y_t= x& + \int_0^tg(Y_s)ds+ \int_0^t \sqrt{2 \sigma^2(Y_s) }dB_s+ \int_0^t \int_0^{p(Y_{s^-})}
\int_{\mathbb{R}_+}z\widetilde{Q}(ds,dx,dz)\\
&+\int_0^t\int_0^{Y_{s^-}}\int_{\mathbb{R}_+}zR(ds,dx,dz)+\int_0^t\int_0^{2r(Y_{s^-})} \int_0^1  (\theta-1)Y_{s^-}N(ds,dx,d\theta). 
\end{align}
In other words, the auxiliary process has the same law as the process along a lineage with a cell division rate multiplied by two (see \cite{bansaye2011limit,BT11}).

\subsection{The case $r(x)= \alpha x + \beta $, $q(x)\equiv q$}
\label{sec_constr_lineaire}

Assume that $c_\brho=0$ (no stable positive jumps).
Then under Assumption \ref{ass_E}, a direct computation shows that if $g\neq \beta$, the mean number of individuals can be written
\begin{equation} \label{mxst}
m(x,s,t)  =\frac{\alpha x}{g-\beta}e^{(g-q)(t-s)}+\left(1-\frac{\alpha x}{g-\beta}\right)e^{(\beta-q)(t-s)}.\end{equation}

For the sake of readability, we introduce the following functions for $y>0$, $s,z\geq 0$, and $\theta \in [0,1]$: 
\begin{align}\label{eq:f1}
f_1(y,s):=
gy + \left(2\sigma^2(y)+p(y)\int_{\R_+}z^2\pi(dz)\right)\frac{\alpha \left(e^{(g-\beta)s}-1\right)}{g-\beta+\alpha y\left(e^{(g-\beta)s}-1\right)},
\end{align}
\begin{align}\label{eq:f2}
f_2(y,s,\theta):=
2 (\alpha y+\beta)\frac{g-\beta+\alpha \theta y \left(e^{(g-\beta)s}-1\right)}{g-\beta+\alpha y\left(e^{(g-\beta)s}-1\right)},
\end{align}
and
\begin{align*}
f_3(y,s,z):= 
p(y)\left(1+\frac{\alpha z\left(e^{(g-\beta)s}-1\right)}{(g-\beta)+\alpha y\left(e^{(g-\beta)s}-1\right)}\right).
\end{align*}
We obtain that $\mathcal{A}^{(t)}$ is the infinitesimal generator of the solution to the following SDE, when existence and uniqueness in law
of the solution hold. For $0\leq s \leq t$,
\begin{align}\nonumber\label{eq:EDSauxi}
 Y_s^{(t)} =&  Y_0^{(t)}+\int_0^sf_1(Y_u^{(t)},t-u)du + \int_0^s\int_0^\infty \int_0^{f_3(Y_{u^-}^{(t)},t-u,z)}z\widetilde{Q}(du,dz,dx) \\
& + \int_0^s \sqrt{2\sigma^2\left( Y_u^{(t)}\right)}dB_u+ \int_0^s\int_0^1\int_{0}^{f_2(Y_{u^-}^{(t)},t-u,\theta)}
(\theta-1)Y_{u^-}^{(t)}N(du,d\theta,dz),
\end{align}
where $\widetilde{Q}$, $B$ and $N$ are the same as in \eqref{X_sans_sauts}.\\

The auxiliary process $Y^{(t)}$ can be realised as the unique strong solution to the SDE \eqref{eq:EDSauxi}
under some moment conditions on the measure associated with the positive jumps. We need to consider an additional assumption on $p$ that ensures that the rate of positive jumps $f_3$ of the process $Y$ is increasing with the quantity of parasites, namely Assumption \ref{Ass_E'}.

\begin{prop}\label{prop_sol_SDE_auxi}
 Suppose that Assumptions \ref{ass_E} and \ref{Ass_E'} hold.
 Then, Equation \eqref{eq:EDSauxi} has a pathwise unique nonnegative strong solution.
\end{prop}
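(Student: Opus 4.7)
The plan is to prove pathwise uniqueness and weak existence for \eqref{eq:EDSauxi} separately and then invoke the Yamada--Watanabe theorem in its jump-SDE form to upgrade to strong existence. Throughout I localize via the stopping times $\tau_n := \inf\{s \in [0,t] : Y_s^{(t)} \geq n\}$ so that only a bounded range of the state has to be controlled, and I defer the proof that $\tau_n \uparrow +\infty$ to the end. The key preliminary observation is that the positive-jump intensity $f_3(y, r, z)$ is non-decreasing in $y$: setting $A := g-\beta$ and $C_r := e^{(g-\beta)r}-1$, which always share the same sign, a direct computation gives
\[
\partial_y f_3(y, r, z) = \frac{\alpha z\, C_r}{(A+\alpha y C_r)^2}\bigl[A\, p'(y) + \alpha C_r \bigl(y p'(y) - p(y)\bigr)\bigr],
\]
which is non-negative under Assumption \ref{Ass_E'} regardless of the sign of $A$. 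Since the denominator $A + \alpha y C_r$ does not vanish for $y \geq 0$ under Assumption \ref{ass_E}, the local Lipschitz and Hölder hypotheses on $g, p, \sigma$ from Assumption \ref{ass_A} lift to local Lipschitz bounds for $f_1$ and $f_2$ (and $1/2$-Hölder control for $\sqrt{\sigma^2(\cdot)}$) on $[0, n]$, uniformly in $r \in [0, t]$.

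For pathwise uniqueness I would adapt the Yamada--Watanabe argument à la Fu--Li used in the companion paper \cite{companion}. Given two solutions $Y, Y'$ driven by the same noise, one applies Itô's formula to $\phi_k(Y_{s \wedge \tau_n} - Y'_{s \wedge \tau_n})$ with the classical smoothing family $\phi_k \uparrow |\cdot|$ satisfying $\phi_k''(x) \leq 2/(kx)$ on $[a_k, a_{k-1}]$, $\int_{a_k}^{a_{k-1}} dx/x = k$. The continuous-martingale contribution vanishes as $k\to\infty$ thanks to the $1/2$-Hölder regularity of $\sigma$; the drift term is controlled by local Lipschitz continuity of $f_1$; the division (negative) jumps are handled by the concavity of $y \mapsto \theta y$ together with a coupling of the Poisson measure through the intensity $f_2$, as in \cite{BT11}; and the positive jumps are handled via the monotonicity of $f_3$ established above, which enables the standard coupling that assigns the excess intensity $|f_3(y,r,z) - f_3(y',r,z)|$ to whichever solution is currently larger. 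The resulting bracket terms are integrable thanks to the $z \vee z^2$ moment condition \eqref{cond_moment2_pi}. Gronwall then yields $\E[|Y_{s \wedge \tau_n} - Y'_{s \wedge \tau_n}|] = 0$ for all $s \in [0, t]$, hence pathwise uniqueness up to $\tau_n$.

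Weak existence up to $\tau_n$ follows from approximating \eqref{eq:EDSauxi} by SDEs with globally Lipschitz coefficients (truncating large positive jumps and smoothing $\sqrt{\sigma^2(\cdot)}$), for which strong existence is standard, and extracting a tight subsequential limit in $\mathbb{D}([0,t], \mathbb{R}_+)$ whose law solves the martingale problem for $\mathcal{A}_s^{(t)}$; combined with pathwise uniqueness, Yamada--Watanabe then provides a pathwise unique strong solution on $[0, \tau_n]$. The main obstacle, which I expect to require the most care, is ruling out explosion, i.e. showing $\tau_n \uparrow +\infty$. The cleanest route is to exploit the Many-to-One formula \eqref{eq:mto}: the population process of Proposition \ref{pro_exi_uni} is well defined and conservative under Assumption \ref{ass_A}, so $m(x, 0, t) < \infty$ for every $(x, t)$, and the auxiliary semigroup $P_{0,s}^{(t)}$ built from it is itself conservative, which shows that the martingale problem for $\mathcal{A}_s^{(t)}$ admits a non-exploding solution on $[0, t]$. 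Pathwise uniqueness then forces $\tau_n \uparrow +\infty$ almost surely and concludes the proof.
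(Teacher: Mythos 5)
Your proposal reaches the right conclusion and isolates the two structural facts that the paper's own proof turns on, but it gets there by a much longer, self-contained route. The paper's proof (Appendix B) does not re-derive the Yamada--Watanabe machinery: it verifies the hypotheses (a), (b), (c) of the general well-posedness result \cite[Proposition 1]{palau2018branching} (itself resting on \cite{li2012strong}), namely a local linear-growth bound for the division-jump term, local Lipschitz estimates for $f_1$ and $f_2$ uniform in $s\le t$ (using that $p$ is locally Lipschitz, $\sigma$ is $1/2$-H\"older and that \eqref{cond_moment2_pi} holds), and, for the positive jumps, the monotonicity of $x\mapsto x+z\mathbf{1}_{\{u\le f_3(x,s,z)\}}$ together with an estimate on $(z\wedge z^2)\left|f_3(x,s,z)-f_3(y,s,z)\right|$. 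Your key observation --- that Assumption \ref{Ass_E'} forces $f_3(\cdot,s,z)$ to be non-decreasing whatever the common sign of $g-\beta$ and $e^{(g-\beta)s}-1$ --- is exactly how the paper uses Assumption \ref{Ass_E'} in its condition (c); your displayed derivative drops the leading term $p'(y)\ge 0$, but since the bracketed correction is already non-negative the conclusion is unaffected (strictly, $p$ is only locally Lipschitz, so one should argue with difference quotients rather than $p'$). What the citation buys the paper is that pathwise uniqueness, weak existence and the behaviour at explosion come packaged in the quoted proposition; what your route buys is independence from that reference, at the cost of redoing the Fu--Li smoothing argument and the approximation/tightness step.

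The one step I would not accept as written is the non-explosion argument. You infer $\tau_n\uparrow\infty$ from conservativity of the auxiliary semigroup via \eqref{eq:mto}, but the identification of the SDE \eqref{eq:EDSauxi} with the auxiliary process of Section \ref{section_constr_aux} is precisely what Proposition \ref{prop_sol_SDE_auxi} is needed for: to transfer conservativity of $P^{(t)}_{r,s}$ to your localized solution you must first show that the abstract auxiliary process is a weak solution of \eqref{eq:EDSauxi} (martingale problem to weak solution, via a representation theorem for the driving noises) and that uniqueness in law holds for the stopped equations; neither is free, and the argument is circular as stated. Moreover, under Assumptions \ref{ass_E} and \ref{Ass_E'} alone (without Assumption \ref{eq:limsup_sigma}) the coefficients $\sigma$ and $p$ may grow superlinearly, so explosion cannot in general be excluded; the statement is to be read on the state space $\bar{\R}_+$ with $+\infty$ absorbing, which is how the paper's framework and the cited proposition handle it, and your final step should either be dropped in favour of that convention or replaced by a genuine a priori bound.
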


The proof of this proposition is given in Appendix \ref{app:prop_sol_SDE_auxi}. The case $g=\beta$ will not be considered in this work, as it entails additional computations and 
does not bring new insights.

Now that we have built auxiliary processes in all cases of interest, and have shown that they can be realised as strong solutions to SDEs, we have all tools in hands to prove the results stated in Sections \ref{sec_ct_b_and_d} to \ref{sec_constant_diff}.
Notice however that we will need to study precisely the convergence properties of the inhomogeneous auxiliary process in the linear division rate case, 
to be able to deduce informations on the quantity of parasites in a typical individual.

\section{Proofs}\label{sec:proofs}

\subsection{Proofs of Section \ref{sec_ct_b_and_d}}
In this section, we derive the results on the behaviour of the population in the case of constant division and death rates.
% Recall that the quantity of parasites in a cell follows the SDE \eqref{X_sans_sauts2}.

\begin{proof}[Proof of Proposition \ref{CNS_ext_ps}]
According to Section \ref{sec_auxi_ct}, the auxiliary process is homogeneous in this case, and is the unique strong solution to the following SDE:
\begin{align*} 
Y_t= x + g\int_0^tY_s ds &+ \int_0^t \sqrt{2\sigma^2Y_s^2}dB_s + \int_0^t \int_0^{Y_{s^-}}
\int_{\mathbb{R}_+}zR(ds,dx,dz) \nonumber \\
&+\int_0^t\int_0^{2r} \int_0^1  (\theta-1)Y_{s^-}N(ds,dx,d\theta). 
\end{align*}
We can thus apply \eqref{eq:mto} to the function 
$$F((X_s^u,s \leq t))= \mathbf{1}_{\{X_t^u <\infty\}},$$
and obtain
\begin{equation*}
\E_{\delta_{x}}\left[\mathfrak{C}_t\right]=e^{(r-q)t}\P_{x}\left(Y_{t}<\infty\right),
\end{equation*}
where we recall that $\mathfrak{C}_t$ is the number of cells alive at time $t$ (that is to say containing a finite quantity of parasites).
The study of the asymptotic behaviour of $\E\left[\mathfrak{C}_t\right]$ is thus reduced to the study of the asymptotics of the non-explosion probability of $Y$.
Following \cite{palau2016asymptotic}, the long time behaviour of $\P_{x}\left(Y_{t}<\infty\right)$ depends on the properties of the L\'evy process $L$ given by:
\begin{equation} \label{def_Lt} 
L_t:=(g-\sigma^2)t + \sqrt{2 \sigma^2}B_t + \int_0^t\int_0^{2r} \int_0^1  \ln \theta N(ds,dx,d\theta).
\end{equation}
Its Laplace exponent $\hat{\kappa}$ is 
\begin{equation*}
\hat{\kappa}(\lambda):=\ln \E[e^{\lambda L_1}]
= \lambda (g-\sigma^2) + \lambda^2 \sigma^2 + 2 r \left[\int_0^1 \theta^\lambda \kappa(d\theta) -1\right],
\end{equation*}
for any $\lambda \in (\lambda^-, \infty)$. Recall that $\lambda^-$ and $\mathbf{m}$ have been defined on page \pageref{lambdamoinsm}. Then an application of \cite[Proposition 2.1]{palau2016asymptotic} gives
the three following asymptotics:
\begin{itemize}
\item[i)] If $\mathbf{m}<0$, then
for every $x>0$ there exists $0<c_1(x)<1$ such that
\begin{equation*}
\underset{t\rightarrow\infty}{\lim}\P_{x}(Y_t<\infty)=c_1(x). 
\end{equation*}
\item[ii)] If $\mathbf{m}=0$ and $\lambda^{-}<0$,
then
for every $x>0$ there exists $c_2(x)>0$ such that
\begin{equation*}
\underset{t\rightarrow\infty}{\lim}\sqrt{t}\P_{x}(Y_t<\infty)=c_2(x).  
\end{equation*}
\item[iii)] If $\mathbf{m}>0$, then
for every $x>0$ there exists $c_3(x)>0$ such that
\begin{equation*}
\underset{t\rightarrow\infty}{\lim}t^{\frac{3}{2}} e^{-\hat{\kappa}(\hat{\tau})}\P_{x}(Y_t<\infty)=c_3(x). 
\end{equation*}
\end{itemize}
It ends the proof.
\end{proof}
 
\begin{proof}[Proof of Proposition \ref{CS_ext_ps}]
First, we consider the case $g(x) = gx$ and $q(x)\equiv q$. 
The process $\mathfrak{X}$ solution to \eqref{X_sans_sauts} has the same law as the unique solution to the SDE
\begin{align*} \tilde{\mathfrak{X}}_t =x +& g\int_0^t \tilde{\mathfrak{X}}_sds
+\int_0^t\sqrt{2\sigma^2 \tilde{\mathfrak{X}}^2_s}dB_s+\int_0^t\sqrt{2\mathfrak{s}^2 (\tilde{\mathfrak{X}}_s)\tilde{\mathfrak{X}}_s}dW_s \\+&
\int_0^t\int_0^{\tilde{\mathfrak{X}}_{s^-}}\int_{\mathbb{R}_+}z\widetilde{Q}(ds,dx,dz)+
\int_0^t\int_0^{\tilde{\mathfrak{X}}_{s^-}}\int_{\mathbb{R}_+}zR(ds,dx,dz),
\end{align*}
where $W$ is a Brownian motion independent of $B$, $Q$ and $R$.
Notice that under the assumptions of Proposition \ref{CS_ext_ps}, $y \mapsto \mathfrak{s}(y)\sqrt{y}$ satisfies point $ii)$ of Assumption \ref{ass_A}.
As in the previous case, explicit computations are possible, and if we keep the notation $Y$ for the auxiliary process associated to 
$\tilde{\mathfrak{X}}$ for the sake of readability, we obtain that $Y$ is solution to:
 \begin{align} \label{X_sans_sauts4} Y_t =&x + g \int_0^t Y_sds
+\int_0^t\sqrt{2\sigma^2 Y^2_s}dB_s+\int_0^t\sqrt{2\mathfrak{s}^2 (Y_s)Y_s}dW_s+
\int_0^t\int_0^{Y_{s^-}}\int_{\mathbb{R}_+}z\widetilde{Q}(ds,dx,dz)\nonumber\\&+
\int_0^t\int_0^{Y_{s^-}}\int_{\mathbb{R}_+}zR(ds,dx,dz)+\int_0^t\int_0^{2r} \int_0^1  (\theta-1)Y_{s^-}N(ds,dx,d\theta), 
\end{align}
where $Y_0=x\geq 0$.
Recall the definition of the L\'evy process $L$ in \eqref{def_Lt}. Then by an application of It\^o's formula with jumps 
we can show that for any $x,\lambda,0 \leq s \leq t$,
\begin{align}\label{eq:expZt}
 \exp\left(-Y_se^{-L_s}v_t(s,\lambda, L)\right) = \int_0^s \exp\left(-Y_ue^{-L_u}v_t(u,\lambda, L)\right)
e^{-2L_u}v^2_t(u,\lambda, L)\mathfrak{s}^2(Y_u)Y_udu+ \mathfrak{M}_s,
\end{align}
where $(\mathfrak{M}_s, 0 \leq s \leq t)$ is a local martingale conditionally on $(L_s,0\leq s \leq t)$ and
$v_t(.,\lambda,L)$ is the unique solution to
$$ \partial_s v_t(s,\lambda,L)=e^{L_s}\psi_0\left(e^{-L_s}v_t(s,\lambda,L)\right), \quad v_t(t,\lambda,L)=\lambda, $$
where
$$\psi_0(\lambda) = c_\mathfrak{b} \lambda^{1+\mathfrak{b}} +\int_0^\infty \left( e^{-\lambda z}-1+\lambda z \right)\pi(dz).$$
With our assumptions on the function $\mathfrak{s}$, the process 
$$ \left( \exp\left(-Y_ue^{-L_u}v_t(u,\lambda, L)\right)
e^{-2L_u}v^2_t(u,\lambda, L)\mathfrak{s}^2(Y_u)Y_u,0\leq u \leq t\right) $$
is bounded by a finite quantity depending only on $(L_u,0\leq u \leq t)$ (using that $x\mapsto e^{-x}$ and $x\mapsto e^{-x}x^\mathfrak{c}$ are bounded on $\R_+$).
Hence $(\mathfrak{M}_s, 0 \leq s \leq t)$ is a true martingale conditionally on $(L_s,0\leq s \leq t)$, and from \eqref{eq:expZt} we get
\begin{equation} \label{exp_Lap_CSBPRE}\E_x \left[ e^{-\lambda Y_te^{-L_t}} \right]= \E_x \left[ e^{- Y_te^{-L_t}v_t(t,\lambda,L)} \right]\geq
\E \left[ e^{-xv_t(0,\lambda,L)} \right]. \end{equation}
Using that $\psi_0(\lambda)>c_\mathfrak{b}\lambda^{1+\mathfrak{b}}$, we obtain
\begin{align*}
 \partial_s v_t(s,\lambda,L)\geq c_\mathfrak{b} e^{L_s}\left(e^{-L_s}v_t(s,\lambda,L)\right)^{1+\mathfrak{b}}, \quad v_t(t,\lambda,L)=\lambda,
\end{align*}
which entails
$$ v_t(0,\lambda,L) \leq \left( \lambda^{-\mathfrak{b}}+\mathfrak{b} c_\mathfrak{b} \int_0^t e^{-\mathfrak{b} L_s}ds \right)^{-1/\mathfrak{b}}. $$
Combining this latter with \eqref{exp_Lap_CSBPRE}, we obtain
$$  \E_x \left[ e^{-\lambda Y_te^{-L_t}} \right]\geq  \E \left[ e^{-x\left( \lambda^{-\mathfrak{b}}+\mathfrak{b} c_\mathfrak{b} \int_0^t e^{-\mathfrak{b} L_s}ds 
\right)^{-1/\mathfrak{b}}} \right], $$
and letting $\lambda$ tend to $0$, we finally get:
$$  \P_x \left( Y_t<\infty \right)\geq  \E \left[ e^{-x\left( \mathfrak{b} c_\mathfrak{b} \int_0^t e^{-\mathfrak{b} L_s}ds \right)^{-1/\mathfrak{b}}} \right]. $$
As stated in \cite{palau2016asymptotic}, the right-hand side of the last inequality is equal to the probability of non-explosion before time $t$ for 
a self-similar continuous state branching process in a L\'evy random environment.
Therefore, by  \cite[Proposition 2.1]{palau2016asymptotic}, we get
\begin{equation}\label{mathfrakcx}
\underset{t\rightarrow +\infty}{\liminf}v(\mathbf{m},t)\P_x \left( Y_t<\infty \right) =: \mathfrak{a}(x)>0,
\end{equation}
where
\begin{align*}\left\lbrace
\begin{array}{ll}
v(\mathbf{m},t) = 1,&\quad \text{for }\mathbf{m<0},\\
v(0,t) = \sqrt{t},&\\
v(\mathbf{m},t) = t^{3/2}e^{t\hat{\kappa}(\hat{\tau})},&\quad \text{for }\mathbf{m>0}.\\
\end{array}
\right.
\end{align*}
Next, we consider the auxiliary process $\tilde{Y}$ in the case where the quantity of parasites is described by \eqref{X_sans_sauts}, 
with $p(x)=x$, $\sigma^2(x) = \mathfrak{s}^2(x) x +\sigma^2 x^2$ and $g(x)\leq gx$. In this case $\tilde{Y}$ has the same law as a process satisfying 
\eqref{X_sans_sauts4} replacing $g\int_0^tY_sds$ by $\int_0^tg(Y_s)ds\leq g\int_0^tY_sds$. Hence if we choose this version of $\tilde{Y}$, $\tilde{Y}_t\leq Y_t$ for all $t\geq 0$ using that both SDEs have a unique strong solution and 
that $\tilde{Y_0}=Y_0$. Therefore, 
$$
\mathbb{P}_x(\tilde{Y}_t<\infty)\geq \mathbb{P}_x(Y_t<\infty).
$$
Hence from the Many-to-One formula \eqref{eq:mto} and the assumption that $q(\cdot) \equiv q$, we obtain for any $x >0$ and $t$ large enough:
\begin{align*}
 \E_{\delta_{x}}[\mathfrak{C}_t] = e^{(r-q)t}\mathbb{P}_x(\tilde{Y}_t<\infty)
& \geq  e^{(r-q)t} \mathbb{P}_x(Y_t<\infty)\\
& = e^{(r-q)t} v^{-1}(\mathbf{m},t) \left( v(\mathbf{m},t)\P_x \left( Y_t<\infty \right)\right)\\
& \geq e^{(r-q)t} v^{-1}(\mathbf{m},t) \mathfrak{a}(x)/2,
\end{align*}
where we recall that $\mathfrak{a}(x)$ has been defined in \eqref{mathfrakcx}.
Adding that either $(\mathbf{m}>0\text{ and \eqref{cond_pop_infinie})}$ or $\mathbf{m}\leq 0$ holds under the assumptions of Proposition \ref{CS_ext_ps}, we obtain that 
$$ \lim_{t \to \infty} \E_{\delta_{x}}[\mathfrak{C}_t] = \infty. $$
Now let us come back to the general case where for any $x \geq 0$, $
q(x) \leq q$ for some $q\geq 0$. Then for any $x > 0$ we can couple the process $X$ with 
a process $X^{(q)}$ with death rate $q$ and number of cells alive at time $t$ given by $\mathfrak{C}_t^{(q)}$, and such that
$$ \E_{\delta_{x}}[\mathfrak{C}_t] \geq \E_{\delta_{x}}[\mathfrak{C}^{(q)}_t]. $$
Such a coupling may be obtained for instance by first realizing $X$ and then obtaining $X^{(q)}$ by killing additional cells at rate $q-q(x)$ for a cell 
containing a quantity $x$ of parasites. It ends the proof.
\end{proof}

We now explore how the long time behaviour of the infection depends on the law of the sharing of parasites between the two daughter cells at division. 
We focus in particular on the uniform and the equal sharings, two cases where explicit computations are doable.
 
\begin{proof}[Proof of Corollary \ref{Cor_CNS_ext_ps_unif}]
We first focus on the case $\kappa(d\theta)=d\theta$. We get $\lambda^-=-1$ and for $\lambda>-1$,
$$\hat{\kappa}(\lambda)=\lambda g + 2 r \left[\int_0^1 \theta^\lambda d\theta -1\right]=\lambda g+ 2 r \left[\frac{1}{\lambda+1} -1\right],\quad \hat{\kappa}'(\lambda)= 
g - 2 r \frac{1}{(\lambda +1)^2}, $$
and
 $$ \mathbf{m}=
g+ 2r\int_0^1 \ln\theta d\theta = g- 2r.$$

The minimum of $\hat{\kappa}$ on $(-1,\infty)$ is reached at 
$$ \hat{\tau} = \sqrt{\frac{2r}{g}}-1 $$
and equals
$$ \hat{\kappa}(\hat{\tau})=\left(\sqrt{\frac{2r}{g}}-1\right) g + 2 r \left[\sqrt{\frac{g}{2r}} -1\right] = 2 \sqrt{2rg}-g-2r.  $$
Let us look at the sign of $ \hat{\kappa}(\hat{\tau}) +r-q = 2\sqrt{2rg}-g-r-q.$ This quantity is nonpositive if and only if

$$
8rg\leq g^2+ (r+q)^2+ 2g(r+q).
$$
Therefore, setting $X = g$, we have to solve the second degree polynomial equation
$$
X^2 +2X(q-3r)+(r+q)^2= 0. 
$$
Recall that $r>q$. In this case, the two solutions are given by 
$$X_1 = 3r-q-2\sqrt{2r(r-q)},\ X_2 = 3r-q+2\sqrt{2r(r-q)},$$
so that $\hat{\kappa}(\hat{\tau}) +r-q $ is negative for $g<X_1$ or $g> X_2$.
Notice that $X_1-2r =r-q-2\sqrt{2r(r-q)}=\sqrt{r-q}(\sqrt{r-q}-2\sqrt{2r})<0$ and $X_2>2r$. Then, the condition $(\mathbf{m}>0 \text{ and } \hat{\kappa}(\hat\tau) +r-q\leq 0)$ is equivalent to
$$g\geq 3r-q+2\sqrt{2r(r-q)},
$$
and the first point is proved using Corollary \ref{Cor_CNS_ext_ps} {\it i)}.

For the proof of {\it ii)}, we have $g< 3r-q+2\sqrt{2r(r-q)}$ and we distinguish two cases: if $g\leq 2r$ then $\mathbf{m}\leq 0$ and if 
$ 2r<g< 3r-q+2\sqrt{2r(r-q)}$, then $(\mathbf{m}>0 \text{ and } \hat{\kappa}(\hat\tau) +r-q> 0)$
so that the second point is proved using Corollary \ref{Cor_CNS_ext_ps} {\it ii)}.

 Let us now consider the case where the cells share equally their parasites between their two daughters ($\Theta \equiv 1/2$). 
In this case we have $\lambda^-=-\infty$ and for $\lambda \in \R$,
$$\hat{\kappa}(\lambda)=\lambda g + 2 r \left[2^{-\lambda} -1\right],\quad \hat{\kappa}'(\lambda)= g-  2^{1-\lambda} r \ln 2 $$
and
 $$ \mathbf{m}= g -  2r\ln 2 .$$
 The minimum of $\hat{\kappa}$ on $\R$ is reached at 
$$ \hat{\tau}= \frac{1}{\ln 2} \ln \left( \frac{2r \ln 2}{g} \right) $$
and equals
$$ \hat{\kappa}(\hat{\tau})= g \hat{\tau} + \frac{g}{\ln 2}-2r.$$
Thus to have almost sure extinction of the cell population, the two following conditions must be satisfied: 
\begin{align*}
% \label{cond_sym}
 2r\ln 2<g \quad \text{and} \quad \frac{g}{\ln 2} \left(1+\ln 2r- \ln \left( \frac{g}{\ln 2}  \right)\right)-r-q \leq 0.
\end{align*}
Let 
$$\varphi(x) = x \left(1+\ln 2r- \ln \left( x  \right)\right)-r-q.$$
We are looking for the sign of $\varphi$ on $[2r,+\infty)$, interval on which the first condition $\mathbf{m}>0$ is satisfied. On this interval, $\varphi$ is decreasing from $r-q>0$ to $-\infty$. Thus, there exists $x_0(r,q)>2r$ such that $\varphi(x_0(r,q))=0$ and
\begin{align*}
\text{if } 2r\ln(2)<g< x_0(r,q)\ln(2),&\quad \text{then } (\mathbf{m}>0\text{ and } \hat{\kappa}(\hat{\tau}) +r-q>0),\\
\text{if }g\geq  x_0(r,q)\ln(2),&\quad \text{then } (\mathbf{m}>0\text{ and } \hat{\kappa}(\hat{\tau}) +r-q\leq 0).
\end{align*}
Finally, applying Corollary \ref{Cor_CNS_ext_ps}, we get
$$
\begin{array}{lll}
\text{if } & g\geq x_0(r,q)\ln(2), & \text{ then } \lim_{t \to \infty} \E[\mathfrak{C}_t]=0,\\
\text{if }2r\ln(2)<&g<x_0(r,q)\ln(2)\text{ or } g\leq 2r\ln 2, & \text{ then }  \lim_{t \to \infty} \E[\mathfrak{C}_t]=\infty,\\
\end{array}
$$
which yields the result.

\end{proof}

We now turn to the proof of the results on the asymptotic behaviour of the quantity of parasites in the cells. Recall that for those results, we consider 
that the dynamics of the parasites in a cell follows \eqref{X_sans_sauts_stables}.
\begin{proof}[Proof of Proposition \ref{prop:beta_ct:g_var}]
From Section \ref{section_constr_aux}, we know that the auxiliary process $Y$ is the unique strong solution to the SDE
\begin{align*}
Y_t= x& + \int_0^tg(Y_s)ds+ \int_0^t \sqrt{2 \sigma^2(Y_s) }dB_s+ \int_0^t \int_0^{p(Y_{s^-})}
\int_{\mathbb{R}_+}z\widetilde{Q}(ds,dx,dz)\\
&+\int_0^t\int_0^{2r} \int_0^1  (\theta-1)Y_{s^-}N(ds,dz,d\theta). \nonumber
\end{align*}

Let us begin with the proof of point {\it ii)}. Note that as $g(x)/x+2r\E[\ln\Theta]<-\eta$ for all $x>0$, \ref{SNinfinity} is satisfied.  From (6.3) of \cite[Theorem 6.2]{companion}, we have
\begin{equation*} \lim_{t\rightarrow +\infty} Y_t = 0 \quad \text{almost surely,} 
\end{equation*}
and combining \eqref{eq:mto} with the fact that $\E_{\delta_x}\left[N_t\right]=e^{(r-q) t}$, we obtain that 
\begin{equation*} 
\E_{\delta_x} \left[ \frac{\sum_{u \in V_t} \mathbf{1}_{\{X_t^u >\eps\}} }{e^{(r-q) t}}\right] \xrightarrow[t\rightarrow \infty]{} 0.
\end{equation*}
Moreover, the fact that $(N_t,t\geq 0)$ is a birth and death process with individual death rate $q$ and individual birth 
rate $r$ also entails that $N_te^{-(r-q) t}$ converges in probability to an exponential random 
variable with parameter $1$ on the event of survival, when $t$ goes to infinity. Hence, we have
\begin{align*}
\mathbf{1}_{\{N_t \geq 1\}}\frac{\sum_{u \in V_t} \mathbf{1}_{\{X_t^{u} >\eps\}} }{N_t} = \frac{\sum_{u \in V_t} \mathbf{1}_{\{X_t^{u} >\eps\}} }{e^{(r-q) t}}\times \frac{\mathbf{1}_{\{N_t \geq 1\}}}{N_te^{-(r-q) t}}\xrightarrow[t\rightarrow \infty]{} 0 \quad \text{in probability}.
\end{align*}
It ends the proof of point {\it ii)}.\\

We now prove point {\it iii)}. Applying again (6.3) of \cite[Theorem 6.2]{companion} to $Y$, we obtain that
$$\mathbb{P}\left( Y_t\neq0\right)\rightarrow 0,\quad (t\rightarrow \infty).$$
From this, similarly as for the proof of point {\it ii)} we obtain that
\begin{equation*} \label{conv_proba1}  \mathbf{1}_{\{N_t\geq 1\}}\frac{\sum_{u \in V_t} \mathbf{1}_{\{X_t^u >0\}} }{N_t} \to 0 \quad \text{in probability}, \quad (t\rightarrow \infty). \end{equation*}

To end the proof of point {\it iii)}, we need to prove that the aforementioned convergence holds almost surely. 
We cannot follow directly the proof of \cite[Theorem 4.2(i)]{BT11} because their Lemma 4.3 concerns Yule processes and does not hold when we take into account the death of cells. However, we can 
prove a result similar to this lemma (see Lemma \ref{lemtechnq} in the Appendix) which is sufficient to get our result.
Except from this lemma the proof is exactly the same and we thus refer to \cite{BT11} for details of the proof.\\

We end with the proof of point {\it i)}.
Applying  \cite[Corollary 6.4.{\it iii)}]{companion} to $Y$, we obtain that
$$ \liminf_{t \to \infty} Y_t e^{-\Lambda_{\rho(t)}} = W, $$
with $\P(W>0)>0$ and where $\Lambda$ is a Lévy process with drift $\bar{\eta}:=\eta/2r$ and $\rho(t)\geq 2rt$.
Writing, for $\eps>0$,
$$  Y_t e^{-(\bar\eta-\eps)t}= Y_te^{-\Lambda_{\rho(t)}}e^{\Lambda_{\rho(t)}-(\bar\eta-\eps)t}, $$
and noticing that $\Lambda_{\rho(t)}-(\bar\eta-\eps)t$ goes to $\infty$ when $t$ goes to $\infty$ (see \cite[Theorem 7.2]{kyprianou2006introductory}), we get
$$ \P_x\left(\liminf_{t \to \infty}Y_t e^{-(\bar\eta-\eps)t}>0 \right)>0, $$
and thus by Fatou's Lemma
$$\liminf_{t \to \infty} \P_x\left(Y_t e^{-(\bar\eta-\eps)t}>0 \right)>0. $$
Hence, using \eqref{eq:mto} we obtain
$$\liminf_{t \to \infty} \E_{\delta_x}\left[ \frac{\sum_{u \in V_t} \mathbf{1}_{\{X_t^u e^{-(\bar\eta-\eps)t}>0 \}} }{e^{(r-q) t}}\right]>0. $$
Now notice that the Cauchy-Schwarz inequality yields
\begin{align*}
 \E^2_{\delta_x}\left[  \frac{\sum_{u \in V_t} \mathbf{1}_{\{X_t^u e^{-(\bar\eta-\eps)t}>0 \}} }{e^{(r-q) t}}\right]&\leq 
 \E_{\delta_x}\left[ \mathbf{1}_{\{N_t \geq 1\}}\left(\frac{\sum_{u \in V_t} \mathbf{1}_{\{X_t^u e^{-(\bar\eta-\eps)t}>0 \}} }{N_t}\right)^2\right]
 \E_{\delta_x}\left[\left(\frac{N_t}{e^{(r-q) t}}\right)^2\right]\\
&  \leq \E_{\delta_x}\left[ \mathbf{1}_{\{N_t \geq 1\}}\frac{\sum_{u \in V_t} \mathbf{1}_{\{X_t^u e^{-(\bar\eta-\eps)t}>0 \}} }{N_t}\right]
 \E_{\delta_x}\left[ \left(\frac{N_t}{e^{(r-q) t}}\right)^2\right],
\end{align*}
where the last inequality comes from the fact that the term in the first expectation in the right-hand side is smaller 
than one. 
The last expectation converges to $C:=1+(r+q)/(r-q)$ as $t$ goes to infinity (see forthcoming Lemma \ref{lem_esp_Nt2} in the case $\alpha=0$). Hence we get
\begin{align*}
0<C^{-1}\liminf_{t \to \infty} \E^2_{\delta_x}\left[ \frac{\sum_{u \in V_t} \mathbf{1}_{\{X_t^u e^{-(\bar\eta-\eps)t}>0 \}} }{e^{(r-q) t}}\right]\leq 
\liminf_{t \to \infty} \E_{\delta_x}\left[\mathbf{1}_{\{N_t \geq 1\}} \frac{\sum_{u \in V_t} \mathbf{1}_{\{X_t^u e^{-(\bar\eta-\eps)t}>0 \}} }{N_t}\right],
\end{align*}
and it ends the proof of point {\it i)}.\end{proof}

\subsection{Proof of Section \ref{sec:LDCD}}

To prove Proposition \ref{prop:temps_long_auxi}, we first need to derive some properties of the auxiliary process $(Y_s^{(t)},s\leq t)$. 
Recall that under the assumptions of Proposition \ref{prop:temps_long_auxi} it is the unique strong solution to \eqref{eq:EDSauxi}.

\subsubsection*{Preliminary results on the auxiliary process}

In what follows, we set $Y_s^{(t)} =Y_t^{(t)}$ for all $s\geq t$. 

The next proposition is an analogue of \cite[Theorem 3.3]{companion} on the absorption of the auxiliary process in finite time and its proof is very similar, except that we have to deal with time dependencies. 
Let
$$
\tau_t^-(0):= \inf\lbrace 0<s\leq t : Y_s^{(t)} = 0\rbrace,
$$
with the convention $\inf\emptyset: = \infty$. Introduce the following assumption: 
\begin{enumerate}[label=\bf{(SN0)}]
\item \label{A1}
There exist $a>1$ such that $\E[\Theta^{1-a}]<\infty$ and a nonnegative function $f$ on $\mathbb{R}_+$ such that
\begin{equation*}
\frac{g(u)}{u}- a\frac{\sigma(u)^2}{u^2}= f(u)+  o( \ln u), \quad u \to 0.
\end{equation*}
\end{enumerate}

This condition ensures that the process is not absorbed at $0$ in finite time.

\begin{prop}\label{prop_extin_auxi}
Suppose that Assumptions \ref{ass_E} and \ref{Ass_E'} hold.
\begin{itemize}
\item[i)] 
If Condition \ref{A1} holds, then $\mathbb{P}_x\left(\tau_t^-(0)<\infty\right)=0$ for all $x>0$.
\item[ii)]
If Condition \ref{A2} holds, then for any $x>0$ and $s>0$, 
$\mathbb{P}_x\left(\tau_t^-(0)<s\right) >0$.
\end{itemize}
\end{prop}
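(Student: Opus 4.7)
The plan is to adapt the proof of \cite[Theorem 3.3]{companion}, which establishes analogous non-absorption and absorption-at-zero statements for time-homogeneous continuous-state non-linear branching processes, to the time-inhomogeneous auxiliary process $(Y_s^{(t)},s\le t)$ given by the SDE \eqref{eq:EDSauxi}. In both parts, the central tool is an It\^o formula computation for a Lyapunov function of the form $V_a(y)=y^{1-a}$: for part i) one picks $a>1$ as in \ref{A1}, so that $V_a(0^+)=+\infty$ and $\mathbb{E}[\Theta^{1-a}]<\infty$, while for part ii) one switches to a complementary Lyapunov estimate using \ref{A2}.

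For part i), I would apply It\^o's formula with jumps to $V_a(Y_s^{(t)})$ using the generator read off \eqref{eq:EDSauxi} with $f_1,f_2,f_3$ defined in Section \ref{sec_constr_lineaire}. The leading contribution near $y=0$ is
\begin{equation*}
V_a(y)\left[(1-a)\frac{g(y)}{y}+a(a-1)\frac{\sigma^2(y)}{y^2}\right]
\end{equation*}
plus the usual jump integrals against $\pi$ and $\kappa$. The extra terms coming from the time-dependent prefactors $\alpha(e^{(g-\beta)(t-s)}-1)/(g-\beta+\alpha y(e^{(g-\beta)(t-s)}-1))$ appearing in $f_1,f_2,f_3$ are uniformly bounded for $y$ in a fixed neighbourhood of $0$ and $s\in[0,t]$, so they contribute only lower-order perturbations. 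Condition \ref{A1} then yields an inequality of the form $\mathcal{A}_s^{(t)}V_a(y)\leq CV_a(y)$ for $y$ small. A standard localisation at the hitting times $\tau_t^-(\eps)$ and $\tau_t^+(n)$, together with Gr\"onwall's lemma, gives $\mathbb{E}_x[V_a(Y^{(t)}_{s\wedge\tau_t^-(\eps)\wedge\tau_t^+(n)})]\leq e^{Cs}V_a(x)$; letting $n\to\infty$ and then $\eps\to 0$, and using $V_a(\eps)\to+\infty$, forces $\mathbb{P}_x(\tau_t^-(0)<s)=0$ for every $s>0$, whence $\mathbb{P}_x(\tau_t^-(0)<\infty)=0$.

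For part ii), the argument is reversed. Under \ref{A2}, Lyapunov functions of the form $h(y)=-\ln y$ or $y^{1-a}$ with $a<1$ have generators whose drift near $0$ is negative enough to drive the process below any $\eps>0$ with positive probability, and a subsequent diffusive excursion or division jump then brings it to $0$ with positive probability. Here too, the uniform control of the time-dependent factors allows one to transfer the homogeneous estimates of \cite{companion} to the inhomogeneous setting. The main obstacle in both parts is precisely this uniform control: one has to check that the $s$-dependent corrections in $f_1,f_2,f_3$ do not overwhelm the dominant Lyapunov terms near $y=0$ and remain integrable against $\pi$ and $\kappa$. This is handled using the explicit expression \eqref{mxst} for $m(x,s,t)$, Assumption \ref{Ass_E'} (which keeps $f_3$ monotone in $y$), and the second-moment condition \eqref{cond_moment2_pi} in Assumption \ref{ass_E}, in the same spirit as the well-posedness argument for \eqref{eq:EDSauxi} carried out in Proposition \ref{prop_sol_SDE_auxi}.
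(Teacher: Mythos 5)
Your overall plan (redo the Lyapunov analysis of \cite[Theorem 3.3]{companion} directly on the time-inhomogeneous generator, using that the $s$-dependent factors in $f_1,f_2,f_3$ are uniformly controlled on $[0,t]$) is a legitimate alternative to the paper's route, which instead sandwiches $Y^{(t)}$ between \emph{time-homogeneous} processes --- $\tY\leq Y^{(t)}$ for part i) (using $f_1(y,s)\geq gy$, $f_2\leq 2(\alpha y+\beta)$ and the monotonicity of $f_3$ from Assumption \ref{Ass_E'}) and $Y^{(t)}\leq\bar Y$ for part ii) (using $f_1\leq\bar g_t$, $f_2\geq 2\theta\beta$) --- and applies the homogeneous theorem, suitably extended to jump rates depending on time and on the jump size $z$, to the bounding processes. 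However, as written your argument has two genuine gaps. First, in part i) the inequality $\mathcal{A}_s^{(t)}V_a(y)\leq CV_a(y)$ for small $y$ does \emph{not} follow from \ref{A1}: that condition only asserts $g(u)/u-a\sigma^2(u)/u^2=f(u)+o(\ln u)$ with $f\geq 0$, so the It\^o computation yields at best
$\mathcal{A}_s^{(t)}V_a(y)\leq C\bigl(1+\eps(y)\,|\ln y|\bigr)V_a(y)$ with $\eps(y)\to 0$, and the unbounded logarithmic factor defeats a plain Gr\"onwall/localisation argument with a fixed constant. This logarithmic allowance is exactly what makes the proof of \cite[Theorem 3.3]{companion} delicate, and it is what the paper's coupling avoids having to redo; you cannot simply assert the linear bound.

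Second, part ii) is not actually proved. A division event sends $y$ to $\theta y$ with $\theta\in(0,1)$ and therefore never produces the value $0$, so ``a subsequent diffusive excursion or division jump then brings it to $0$'' is not a mechanism; and reaching an arbitrarily small neighbourhood of $0$ is not the same as being absorbed at $0$ within an arbitrary time window $[0,s]$, which is the whole content of the accessibility statement under \ref{A2}. You need either a genuine inhomogeneous analogue of \cite[Theorem 3.3 iii)]{companion} or, as in the paper, a domination $Y^{(t)}_s\leq\bar Y_s$ by a homogeneous process so that $\{\bar Y_s=0\}\subset\{Y^{(t)}_s=0\}$, followed by an application of the homogeneous absorption result (after reweighting the fragmentation kernel to remove the $\theta$-dependence of the division rate, as in Lemma \ref{chgt-eds}). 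In both parts you would also need to handle explicitly the fact that the positive-jump rate $f_3$ depends on the jump size $z$, which the paper addresses by extending the results of \cite{companion} to rates bounded by $p(x)(1+\mathfrak{c}_tz)$; your appeal to \eqref{cond_moment2_pi} gestures at this but does not carry it out.
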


\begin{proof}[Proof of Proposition \ref{prop_extin_auxi}]
 This proof is very similar to the proof of \cite[Theorem 3.3]{companion}. The only modifications are due to the time-inhomogeneity of the auxiliary process, and to the fact that the time interval is restricted to $[0,t]$.
We proceed by coupling to overcome these two difficulties.
First, we prove that \cite[Theorem 3.3]{companion} still holds if the rate of positive jumps 
 depends on time and jump sizes.
Let $t>0$ and consider the process $X$ solution to
\begin{align}\label{eq:EDS}
X_t= X_0+ \int_0^t g(X_s) ds+ \int_0^t\sqrt{2 \sigma^2(X_s) }dB_s&+ \int_0^t\int_{\mathbb{R}_+}\int_0^{p_t(X_{s^-},s,z)}z\widetilde{Q}(ds,dz,dx)\\
&+ \int_0^t  \int_0^{r(X_{s^-})} \int_0^1 (\theta-1)X_{s^-}N(ds,dx,d\theta), \nonumber
\end{align}
where for $x,z \geq 0$ and $s \leq t$,
\begin{equation} \label{bound_f3} p_t(x,s,z) \leq p(x) (1+ \mathfrak{c}_t z), \end{equation}
with $\mathfrak{c}_t$ a finite and positive constant depending on $t$.
Let us define for $a>1$
\begin{align*}
G_a^{(s)}(x) : = & (a-1)\frac{g(x)}{x}-a(a-1)\frac{\sigma^2(x)}{x^2}-r(x)\E[\Theta^{1-a}-1]\\
& -\int_{\mathbb{R}_+}p_t(x,s,z) \left((zx^{-1}+1)^{1-a}-1-(1-a)zx^{-1}\right)\pi(dz).
\end{align*}
Using \eqref{bound_f3} we can show as in the proof of \cite[Remark 3.2]{companion} that under \eqref{cond_moment2_pi}
\begin{align}\label{eq:limsup_p}\nonumber
\limsup_{x\rightarrow 0^+} \left( x^{-2}\int_0^\infty p_t(x,s,z) z^2\left(\int_0^1 (1+zx^{-1}v)^{-1-a}(1-v)dv\right)\pi(dz) \right)\\
\leq C_t \left(\limsup_{x\rightarrow 0^+}p(x)x^{-1}\right)\left(\int_0^\infty (z+z^2)\pi(dz)\right)< \infty
\end{align}
where $C_t$ is a finite constant.
Then, applying Itô's Formula with jumps we can check that \cite[Lemma 7.1 and Equation (7.1)]{companion} still hold with $G_a^{(s)}(X_s)$ instead of $G_a(X_s)$.
The proof of \cite[Theorem 3.3]{companion} is thus unchanged and the results hold also for processes whose rate of positive jumps satisfy 
\eqref{bound_f3}.\\
 
Let us now prove point {\it i)}. Introduce $\tY$ the unique strong solution to
\begin{align*}
\tY_t  = \tY_0 + g\int_0^t \tY_s ds+ \int_0^t \sqrt{2 \sigma^2(\tY_s) }dB_s 
+ \int_0^t \int_{\mathbb{R}_+}\int_0^{f_3(\tY_{s^-},t-s,z)}z\widetilde{Q}(ds,dz,dx)
\\ + \int_0^t \int_0^{2(\alpha \tY_{s^-}+\beta)} \int_0^1  (\theta-1)\tY_{s^-}N(ds,dz,d\theta),
\end{align*}
where the Brownian motion and the Poisson random measures are the same as in \eqref{eq:EDSauxi}, and by convention we 
decide that $f_3(x,u,z)= 0$ if $u \leq 0$. 
Notice that for all $y\geq 0$, $0\leq s \leq t$, $0\leq \theta\leq 1$, and $z\geq 0$, 
$$f_1(y,s)\geq gy, \quad f_2(y,s,\theta)\leq 2(\alpha y+\beta),$$
and $f_3(x,t-s,z)$ is non-decreasing in $x$ (thanks to Assumption \ref{Ass_E'}).
In particular this implies that if $\tY$ is a solution with $\tY_0= Y_0^{(t)}$, then $\tY_s\leq Y_s^{(t)}$ for any $s$ smaller than $t$.
But $\tY$ satisfies assumptions of point {\it i)} of \cite[Theorem 3.3]{companion}, and thus does not reach $0$ in finite time. We deduce that $Y^{(t)}$ 
does not reach $0$ before time $t$.\\

We now prove point {\it ii)}.
First notice that 
for any $x> 0$ and $s \leq t$, the function 
$f_1$ defined in \eqref{eq:f1} satisfies
$$ f_1(x,s) \leq gx + \left(2\sigma^2(x) + p(x)\int_{\R_+}z^2\pi(dz)\right) \frac{A_t}{1+x A_t}=:\bar{g}_t(x), $$
where $A_t = \alpha(e^{(g-\beta)t}-1)/(g-\beta)$.
Let
 $(\bar{Y}_s,s\geq 0)$ be the unique strong solution to
\begin{align}\nonumber\label{eq:tildeY}
\bar{Y}_s = & Y_0^{(t)}+ \int_0^s\bar{g}_t\left(\bar{Y}_u\right)du + \int_0^s\sqrt{2\sigma^2\left(\bar{Y}_u\right)}dB_u + 
\int_0^s\int_0^1\int_{0}^{\bar{r}(\theta)}(\theta-1)\bar{Y}_{u^-} N(du,d\theta,dz)\\
& + \int_0^s \int_0^\infty\int_0^{f_3(\bar{Y}_{s^-},t-s,z)}z\widetilde{Q}(du,dz,dx),
\end{align}
where for all $x\geq 0$, $s\geq 0$ and $\theta\in [0,1]$,
$$
\bar{r}(\theta) := 2\theta\beta\leq f_2(x,s,\theta),
$$
with $f_2$ defined in \eqref{eq:f2},
$B$, $N$ and $\widetilde{Q}$ are the same as in \eqref{eq:EDSauxi} and $f_3(x,u,z)= 0$ if $u \leq 0$.
Then, for all $0\leq s \leq t$, $\bar{Y}_s \geq Y_s^{(t)}.$ As a consequence, if we introduce 
\begin{equation*}
\bar{\tau}^-(0):= \inf\lbrace s\geq 0 : \bar{Y}_s = 0\rbrace,
\end{equation*}
and prove that for any $0<v\leq t$,
\begin{equation} \label{bartau0} \P(\bar{\tau}^-(0)<v)>0, \end{equation}
it will imply that
$$ \P(\tau_t^-(0)<v) \geq \P(\bar{\tau}^-(0)<v)>0  $$
and end the proof.
To prove \eqref{bartau0}, we apply \cite[Theorem 3.3{\it iii)}]{companion} to the process $\bar{Y}$. Notice that here, unlike in \cite[Theorem 3.3]{companion}, the division rate $\bar{r}$ depends on $\theta$. 
However, the dependence in $\theta$ in the division rate can be removed by considering a new Poisson point measure 
$N'$ with a modified fragmentation kernel so that all the results derived above still hold. We refer the reader to Appendix \ref{app:generalization SDE} for more details.
\end{proof}
In the case where the absorption of the auxiliary process occurs with positive probability, 
we are able to prove the convergence of the last part of the auxiliary process trajectory on a time window of any size. 
\begin{prop}\label{prop:conv_auxi}
Let $T\geq 0$. Suppose that Assumptions \ref{ass_E}, \ref{Ass_E'},  \ref{eq:limsup_sigma} hold. 
Then, there exist $C,\overline{c}>0$ 
and a probability measure $\Pi$ on the Borel $\sigma$-field of $\mathbb{D}\left([0,T],\mathcal{X}\right)$ endowed with the Skorokhod distance 
such that for all bounded measurable functions $F:\mathbb{D}\left([0,T],\mathcal{X}\right)\rightarrow \mathbb{R}$ and for all $x\geq 0$,
\begin{align*}
\left|\mathbb{E}\left[F\left(Y_{t+s}^{(t+T)},s\leq T\right)\Big|Y_{0}^{(t+T)} = x \right]- \Pi(F)\right|\leq C e^{-\overline{c}t}\left\Vert F\right\Vert_{\infty}x.
\end{align*}
\end{prop}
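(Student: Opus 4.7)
The plan is to establish a Harris-type exponential contraction for the time-inhomogeneous auxiliary process via a Lyapunov--minorization argument, and then to transfer marginal convergence at time $t$ to path convergence on the window $[t,t+T]$ by means of the Markov property.

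\textbf{Step 1 (Lyapunov function).} Using Lemma \ref{lemma:Lyapunov_min} (which is designed precisely for this purpose under Assumptions \ref{ass_E}, \ref{Ass_E'} and \ref{eq:limsup_sigma}), I would work with a Lyapunov function of the form $V(x)=1+x$ and establish a drift inequality
\begin{equation*}
\mathcal{A}^{(t+T)}_{s}V(x)\leq -\gamma\,V(x)+K,
\end{equation*}
uniformly in $s\leq t+T$ and in the horizon. The mechanism is that the linear division rate $r(x)=\alpha x+\beta$ combined with symmetric sharing produces, in the division term of $\mathcal{A}^{(t+T)}_{s}$, a negative drift of order $-\alpha x^{2}$ which dominates the positive contributions of order $x^{2}$ coming from the diffusive and jump parts thanks to Assumption \ref{eq:limsup_sigma}; Assumption \ref{Ass_E'} keeps the size-biased jump rate $f_{3}$ monotone, so its contribution stays under control.

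\textbf{Step 2 (Uniform minorization).} On a sublevel set $\mathcal{C}=\{V\leq R\}$, I would prove that there exist $s_{0}>0$, $\epsilon>0$ and a probability measure $\nu$ on $\mathbb{R}_{+}$ such that, for the transition kernel of the auxiliary process,
\begin{equation*}
P^{(t+T)}_{r,r+s_{0}}(x,\cdot)\geq \epsilon\,\nu(\cdot),\qquad x\in\mathcal{C},\ r+s_{0}\leq t+T,
\end{equation*}
uniformly in $r$ and in the horizon $t+T$. The idea is that, starting from any $x\in\mathcal{C}$, at least one division occurs during $[r,r+s_{0}]$ with probability bounded below, and a size-biased division event places the post-jump state in a fixed interval with a density bounded below; uniformity is ensured because the rates $f_{1},f_{2},f_{3}$ admit uniform upper and lower bounds when evaluated at states in $\mathcal{C}$ and time-to-go in $[0,\infty)$.

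\textbf{Step 3 (Harris contraction and limiting marginal).} Combining the drift inequality and the minorization via a time-inhomogeneous Harris-type argument (adapting standard coupling constructions as in \cite{bansaye2011limit, cloez2017limit, marguet2016uniform}) yields the existence of constants $C,\overline{c}>0$ such that, for every bounded measurable $\varphi:\mathbb{R}_{+}\to\mathbb{R}$ and every $x,y\geq 0$,
\begin{equation*}
\bigl|\mathbb{E}_{x}[\varphi(Y^{(t+T)}_{t})]-\mathbb{E}_{y}[\varphi(Y^{(t+T)}_{t})]\bigr|\leq C\,e^{-\overline{c}\,t}(V(x)+V(y))\|\varphi\|_{\infty}.
\end{equation*}
A standard Cauchy argument in $t$ produces a probability measure $\pi_{T}$ on $\mathbb{R}_{+}$ with
\begin{equation*}
\bigl|\mathbb{E}_{x}[\varphi(Y^{(t+T)}_{t})]-\pi_{T}(\varphi)\bigr|\leq C\,e^{-\overline{c}\,t}\,V(x)\,\|\varphi\|_{\infty}.
\end{equation*}

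\textbf{Step 4 (From marginal to path convergence).} The key observation is that on the window $[t,t+T]$ the generator $\mathcal{A}^{(t+T)}_{t+s}$ depends only on the time-to-go $T-s$, and hence not on $t$. By the Markov property, for any bounded measurable $F:\mathbb{D}([0,T],\mathcal{X})\to\mathbb{R}$, the function
\begin{equation*}
\bar{F}(y):=\mathbb{E}\bigl[F\bigl((Y^{(t+T)}_{t+s})_{s\leq T}\bigr)\,\big|\,Y^{(t+T)}_{t}=y\bigr]
\end{equation*}
depends on $y$ alone and satisfies $\|\bar{F}\|_{\infty}\leq \|F\|_{\infty}$. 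Defining $\Pi(F):=\pi_{T}(\bar{F})$, applying Step 3 to $\bar{F}$, and using $V(x)=1+x$ gives the stated bound, with the $x$ in the right-hand side coming from the linear Lyapunov function (up to absorbing the constant term into $C$).

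The step I expect to be the main obstacle is Step 2: establishing a minorization that is uniform in the horizon $t+T$ and in the starting time $r$ requires a careful analysis of the time-dependent rates $f_{1},f_{2},f_{3}$, so as to produce lower bounds that do not degrade as the time-to-go varies in $[0,\infty)$. Step 3 is the other delicate point, since the Hairer--Mattingly or Meyn--Tweedie Harris theorems are stated for homogeneous semigroups and must be adapted to the inhomogeneous setting, either by a direct coupling construction or by iterating the one-step contraction on windows of length $s_{0}$.
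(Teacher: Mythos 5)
Your overall architecture (Foster--Lyapunov drift plus a minorization on a sublevel set, fed into a Harris-type contraction for the time-inhomogeneous semigroup, then reduction of the path functional $F$ to a marginal one via the Markov property and the fact that the generator on $[t,t+T]$ depends only on the time-to-go) is exactly the skeleton of the paper's argument: the paper verifies these two conditions in Lemma \ref{lemma:Lyapunov_min} and then invokes \cite[Proposition 3.3]{marguet2017law}, which already packages your Steps 3 and 4 for time-inhomogeneous processes. One small point: the paper takes $V(x)=x$ rather than $1+x$, which is what produces the factor $x$ (and not $1+x$) in the final bound; you cannot absorb the additive constant of $1+x$ into $C\,x$ near $x=0$, so your choice of $V$ would only yield the weaker bound $Ce^{-\overline{c}t}\|F\|_\infty(1+x)$.

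The genuine gap is in Step 2, and it is the step you yourself flagged as the main obstacle. The proposed mechanism --- ``a size-biased division event places the post-jump state in a fixed interval with a density bounded below'' --- fails in the generality of the model: the fragmentation kernel $\kappa$ may be purely atomic (e.g. $\delta_{1/2}$ or $\tfrac12\left(\delta_{\theta_0}+\delta_{1-\theta_0}\right)$, both treated in Section \ref{sec_mean_numb_cells}) and $\sigma$ may vanish identically, so a division from state $x$ produces the atom $\theta x$ and no absolutely continuous component; no fixed measure $\nu$ with a density can be dominated uniformly over $x\leq R$ this way. The paper's resolution is to take $\nu=\delta_0$, exploiting absorption at $0$ rather than smoothing: it dominates $Y^{(t)}$ from above by a process $\bar{Y}$ (using $f_1(y,s)\geq gy$ replaced by an upper bound $\bar{g}_t$, $f_2(y,s,\theta)\geq 2\theta\beta$, and the monotonicity of $f_3$ in the state variable guaranteed by Assumption \ref{Ass_E'}), uses monotonicity of $\bar{Y}$ in its initial condition to reduce the infimum over $x\leq R$ to the single worst starting point $R$, and then applies \cite[Theorem 3.3 iii)]{companion} to get $\mathbb{P}\left(\bar{Y}_{s-r}=0\,\middle|\,\bar{Y}_0=R\right)>\alpha_{s-r}>0$. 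This comparison argument also handles the uniformity issues you raised, since all the horizon dependence is pushed into the dominating process. Without replacing your density argument by a construction of this kind (or adding non-degeneracy hypotheses on $\kappa$ or $\sigma$ that the paper does not make), Step 2 does not go through.
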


We prove the convergence of the auxiliary process by verifying a Foster-Lyapunov inequality and a minoration condition, both stated in Lemma \ref{lemma:Lyapunov_min} below. Those standard conditions were exhibited in \cite{marguet2017law} as an extension of \cite{hairer2011yet} to time-inhomogeneous processes. The Foster-Lyapunov inequality (Condition {\it i)} in Lemma \ref{lemma:Lyapunov_min}) ensures that
$$
\E_x\left[V\left(Y_s^{(t)}\right)\right]\leq e^{-as}V(x)+\frac{d}{a}\left(1-e^{-as}\right),
$$
where $a$ and $d$ are positive constants,
so that the process is brought back to the sublevel sets of $V$. The minoration condition ({\it i.e.} Condition {\it ii)} in Lemma \ref{lemma:Lyapunov_min}) ensures some type of irreducibility of the process on those sublevel sets.
Let 
$$V(x)=x \quad  \text{ for } x\in\mathbb{R}_+.$$

\begin{lemma}\label{lemma:Lyapunov_min}Under the assumptions of Proposition \ref{prop:conv_auxi}, we have the following:
\begin{enumerate}[label=\roman*)]
\item There exist $a,d>0$ such that for all $0\leq s\leq t$ and $x\in\mathbb{R}_+^{*}$,
\begin{align*}
\mathcal{A}_s^{(t)}V(x)\leq -aV(x)+d.
\end{align*}
\item There exists $R>2da^{-1}$ such that for all $r<s\leq  t$, there exist $\alpha_{s-r}>0$ and a probability measure $\nu$ on $\mathbb{R}_+$ 
such that for all Borel sets $A$ of $\mathbb{R}_+$,
\begin{align*}
\inf_{x\leq R}\mathbb{P}\left(Y_{s}^{(t)}\in A\big| Y_r^{(t)}=x\right)\geq \alpha_{s-r} \nu(A).
\end{align*}
\end{enumerate}
\end{lemma}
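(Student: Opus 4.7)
I would apply the generator $\mathcal{A}_s^{(t)}$ from Section~\ref{sec_constr_lineaire} to $V(x)=x$. The linearity of $V$ kills $V''$, the $\rho$-term (which vanishes anyway since $c_\mathfrak{b}=0$ under \ref{ass_E}), and the compensated positive-jump integrals, so only the drift coefficient $f_1(x,t-s)$ and the division contribution survive. Exploiting the symmetry of $\kappa$, namely $\int_0^1(\theta-1)\kappa(d\theta)=-1/2$ and $\int_0^1\theta(\theta-1)\kappa(d\theta)=\mathrm{Var}(\Theta)-1/4=:-\delta$ with $\delta\in(0,1/4]$ strictly positive because $\Theta$ takes values in the open interval $(0,1)$, a short computation yields
\[
\mathcal{A}_s^{(t)}V(x)=gx+\bigl(2\sigma^2(x)+p(x)\mu_2\bigr)\frac{h(u)}{1+xh(u)}-\frac{(\alpha x+\beta)x\,(1+2\delta xh(u))}{1+xh(u)},
\]
where $u=t-s$, $h(u):=\alpha(e^{(g-\beta)u}-1)/(g-\beta)$ and $\mu_2:=\int z^2\pi(dz)<\infty$ by \eqref{cond_moment2_pi}. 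Two elementary inequalities then close it: first, since $\delta\leq 1/4<1/2$ the map $y\mapsto(1+2\delta y)/(1+y)$ is nonincreasing on $[0,\infty)$ with infimum $2\delta$, so the division term is $\leq -2\delta(\alpha x+\beta)x$; second, $h(u)/(1+xh(u))\leq 1/x$, and by Assumption~\ref{eq:limsup_sigma} combined with the local behavior at the origin ($\sigma$ H\"older-$1/2$ with $\sigma(0)=0$, $p$ locally Lipschitz with $p(0)=0$) one has $2\sigma^2(x)+p(x)\mu_2\leq C(x+x^2)$, so the noise-drift correction is $\leq C(1+x)$. Combining these gives an estimate of the form $\mathcal{A}_s^{(t)}V(x)\leq -2\delta\alpha x^2+(g+C-2\delta\beta)x+C$, and maximizing the downward parabola in $x$ yields $\mathcal{A}_s^{(t)}V(x)\leq -aV(x)+d$ with $a:=\delta\beta>0$ (recall $\beta>0$ by \ref{ass_E}) and an explicit constant $d$ depending only on $g,C,\alpha,\beta,\delta$, independent of $s,t$.

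\textbf{Part (ii): minorization on sublevel sets.} Fix $R>2d/a$ and $0\leq r<s\leq t$. The plan is to exhibit, for every $x\in[0,R]$, an absolutely continuous component in the law of $Y_s^{(t)}$ bounded below by a fixed multiple of a common probability measure $\nu$. On $[0,R]$ the division rate $2(\alpha x+\beta)$ and the total positive-jump rate $\int_0^\infty f_3(x,t-u,z)\pi(dz)$ are uniformly bounded, the latter by Assumption~\ref{Ass_E'}, the explicit form of $f_3$, and the quadratic growth of $p$. Conditionally on the event $E_\eta$ that neither a division nor a positive jump occurs on $[r,r+\eta]$, for some small $\eta<s-r$, which has probability at least $e^{-c\eta}$ uniformly over $x\in[0,R]$, the process on $[r,r+\eta]$ evolves according to the autonomous diffusion $dX=g(X)dt+\sqrt{2\sigma^2(X)}dB$. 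Standard density lower bounds for such SDEs give an absolutely continuous component for the law of $X_{r+\eta}$, bounded below on a fixed compact set $K\subset(0,\infty)$ uniformly over $x\in[0,R]$. Propagating this absolutely continuous part through the (positive) transition semigroup of $Y^{(t)}$ on $[r+\eta,s]$ preserves a lower bound, and one obtains the minorization with $\nu$ the normalized Lebesgue measure on a compact subset of the reachable open set and $\alpha_{s-r}>0$ depending on $s-r$ but not on $x\in[0,R]$.

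\textbf{Main obstacle.} The delicate step is part (ii) when $\sigma$ may degenerate at $0$: classical Krylov or hypoelliptic-type density estimates then fail near the boundary. The remedy I would adopt is to first use one or several division events with $\Theta$ supported in $(0,1)$ to transport mass from any starting point $x\in[0,R]$ into a compact subset of $(0,\infty)$ on which either $\sigma^2$ or the positive-jump kernel provides the necessary non-degeneracy, at the cost of working with a slightly longer initial segment before invoking the density-smoothing step. The time-inhomogeneity of the semigroup is not a serious difficulty, since all the bounds used on $f_1,f_2,f_3$ are uniform in $u=t-s\in[0,t]$.
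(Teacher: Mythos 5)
Your part (i) is correct and follows essentially the same route as the paper: apply $\mathcal{A}_s^{(t)}$ to $V(x)=x$, bound the noise/jump correction in $f_1$ by $\bigl(2\sigma^2(x)+p(x)\int z^2\pi(dz)\bigr)/x$, and let the division term produce the dominating $-2\alpha\,\E[\Theta(1-\Theta)]\,x^2$; your exact computation of the division contribution via $\int_0^1(\theta-1)\kappa(d\theta)=-1/2$ and the monotonicity of $y\mapsto(1+2\delta y)/(1+y)$ is a slightly more explicit version of the paper's inequality $f_2(x,s,\theta)\geq 2\theta\alpha x$, and both arguments hinge on $\alpha>0$ and Assumption \ref{eq:limsup_sigma} in the same way.

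Part (ii) is where your proposal genuinely diverges, and there is a real gap. You aim for a Lebesgue-type minorization obtained from density lower bounds for the diffusion $dX=g(X)dt+\sqrt{2\sigma^2(X)}dB$ on an event with no jumps and no divisions. But nothing in the hypotheses of Proposition \ref{prop:conv_auxi} provides any non-degeneracy: Assumption \ref{eq:limsup_sigma} is only an upper bound on $\sigma^2$ and $p$, and the standing assumptions allow $\sigma\equiv 0$ and $\pi\equiv 0$. In that case (take moreover $\kappa=\delta_{1/2}$) the conditional law of $Y_s^{(t)}$ given $Y_r^{(t)}=x$ is purely atomic, with atoms that move with $x$, so no absolutely continuous component exists and no minorization by (normalized) Lebesgue measure on a fixed compact set can hold. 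Your ``main obstacle'' paragraph assumes that either $\sigma^2$ or the jump kernel is non-degenerate on some compact subset of $(0,\infty)$, which is simply not guaranteed. The paper sidesteps all of this by choosing the minorizing measure to be atomic: it takes $\nu=\delta_0$ and shows $\inf_{x\leq R}\P\bigl(Y_s^{(t)}=0\mid Y_r^{(t)}=x\bigr)\geq\alpha_{s-r}>0$. Concretely, it dominates $Y^{(t)}$ from above by the time-homogeneous process $\bar{Y}$ of \eqref{eq:tildeY} (using $f_1(x,s)\leq\bar{g}_t(x)$ and $f_2(x,s,\theta)\geq 2\theta\beta$), uses monotonicity of $\bar{Y}$ in its initial condition to reduce the infimum over $x\leq R$ to the single starting point $R$, and then invokes the finite-time absorption result \cite[Theorem 3.3\,iii)]{companion} for $\bar{Y}$. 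If you want to repair your argument you should adopt this Dirac-at-the-absorbing-state strategy rather than a density-smoothing one; the Hairer--Mattingly/Harris scheme only needs some common minorizing measure, and $\delta_0$ is the one that is uniformly accessible here.
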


\begin{proof}{\it{i)}} 
We have
\begin{align*}
\mathcal{A}_s^{(t)}V(x) & = f_1(x,t-s)-\int_0^1f_2(x,t-s,\theta)x(1-\theta)\kappa(d\theta)\\
& \leq gx+ 2\frac{\sigma(x)^2}{x}+\frac{p(x)}{x}\int_{\R_+}z^2\pi(dz) -2\alpha x^2 \E\left[\Theta(1-\Theta)\right].
\end{align*}
According to Assumption \ref{eq:limsup_sigma}, there exist $A>0$ and $a>0$ such that for all $x>A$
\begin{align*}
2\frac{\sigma(x)^2}{x^2}+\frac{p(x)}{x^2}\int_{\R_+}z^2\pi(dz) -2\alpha x \E\left[\Theta(1-\Theta)\right]<-(a+g).
\end{align*}
Then, 
\begin{align*}
\mathcal{A}_s^{(t)}V(x) \leq -ax+ \mathbf{1}_{\lbrace x\leq A\rbrace}\left(2\frac{\sigma(x)^2}{x}+\frac{p(x)}{x}\int_{\R_+}z^2\pi(dz) -2\alpha x^2 \E\left[\Theta(1-\Theta)\right]\right),
\end{align*}
and according to Assumption \ref{ass_A}, there exists $d>0$ such that for every $x \geq 0$,
\begin{align*}
\mathcal{A}_s^{(t)}V(x) & \leq -ax+d.
\end{align*}

{\it ii)} Let $R>2da^{-1},$ where $a,d$ are given in {\it i)}. We will prove the minoration condition with $\nu = \delta_0$, where $\delta_0$ is the Dirac measure at $0$. Consider again $\bar{Y}$, defined as 
the unique strong solution to the SDE \eqref{eq:tildeY}. We recall that $\bar{Y_s}\geq Y_s^{(t)}$, for all $s\leq t$. Therefore for all $r<s\leq t$ and all Borel sets $A$ of $\mathbb{R}_+$,
\begin{align*}
\mathbb{P}\left(Y_s^{(t)}\in A\big|Y_r^{(t)}=x\right) \geq \mathbb{P}\left(Y_s^{(t)}=0\big|Y_r^{(t)} = x\right)\delta_0(A) \geq \mathbb{P}\left(\bar{Y}_s=0\big|\bar{Y}_r = x\right)\delta_0(A).
\end{align*} 
Next, notice that if $\bar{Y}^1, \bar{Y}^2$ are two solutions to \eqref{eq:tildeY} with respective initial conditions at time $r$ satisfying 
$\bar{Y}^1_r\leq \bar{Y}^2_r$, then $\bar{Y}^1_s\leq \bar{Y}^2_s$ for all $r\leq s\leq t$. Hence, for all $x\leq R$,
\begin{align*}
 \mathbb{P}\left(\bar{Y}_s=0\big|\bar{Y}_r = x\right)=\mathbb{P}\left(\bar{Y}_{s-r}=0\big|\bar{Y}_0 = x\right)\geq 
 \mathbb{P}\left(\bar{Y}_{s-r}=0\big|\bar{Y}_0 = R\right).
\end{align*}
Finally, using \cite[Theorem 3.3{\it iii)}]{companion} on $\bar{Y}$, there exists $\alpha_{s-r}>0$ such that 
$$
\mathbb{P}\left(\bar{Y}_{s-r}=0\big|\bar{Y}_0 = R\right)>\alpha_{s-r},
$$
which ends the proof.\end{proof}

\begin{proof}[Proof of Proposition \ref{prop:conv_auxi}] 
This result is a direct application of \cite[Proposition 3.3]{marguet2017law}, whose assumptions are satisfied thanks to Lemma \ref{lemma:Lyapunov_min}. 
\end{proof}
This convergence result allows us to establish a law of large numbers, linking asymptotically the behaviour of a 
typical individual, given by the auxiliary process $Y^{(t)}$, 
with the behaviour of the whole population.

\begin{thm}\label{thm:conv_auxi}
Suppose that Assumptions \ref{ass_E}, \ref{Ass_E'},  \ref{eq:limsup_sigma} hold, that $\int_{\R_+}z^6\pi(dz)<\infty$ and that
$\max(g,\beta)>q$.
Then for all bounded measurable functions $F:\mathbb{D}([0,T],\mathcal{X})\rightarrow\mathbb{R}$, for all $x_0,x_1\geq 0$,
\begin{align*}
\mathbf{1}_{\lbrace N_{t+T}\geq 1\rbrace}\left(\frac{\sum_{u\in V_{t+T}}F\left(X_{t+s}^{u},s\leq T\right)}{N_{t+T}}-
\mathbb{E}\left[F\left(Y_{t+s}^{(t+T)},s\leq T\right)\Big|Y_{0}^{(t+T)} = x_1 \right]\right)\xrightarrow[t\rightarrow +\infty]{\mathbb{L}_2(\delta_{x_0})} 0,
\end{align*}
where we recall that $N_{t} : = \mathrm{Card}(V_t)$.
\end{thm}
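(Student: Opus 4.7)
The strategy is a second-moment argument combined with the exponential mixing of the auxiliary semigroup (Proposition \ref{prop:conv_auxi}) and a control of the total cell count $N_{t+T}$.

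First I would reduce the statement to a version involving $\Phi_0(t) := \mathbb{E}[F(Y_{t+s}^{(t+T)}, s\leq T) \mid Y_0^{(t+T)} = x_0]$. By Proposition \ref{prop:conv_auxi}, both $\Phi_0(t)$ and $\Phi_1(t) := \mathbb{E}[F(Y_{t+s}^{(t+T)}, s\leq T) \mid Y_0^{(t+T)} = x_1]$ satisfy $|\Phi_i(t) - \Pi(F)| \leq C e^{-\overline{c} t}\|F\|_\infty x_i$, so $\Phi_1(t) - \Phi_0(t)$ is a deterministic $o(1)$ and it suffices to prove the theorem with $\Phi_0(t)$ in place of the conditional expectation at $x_1$. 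Since the integrand is pointwise bounded by $2\|F\|_\infty$, bounded convergence reduces the task further to convergence in probability.

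Setting $S_t := \sum_{u\in V_{t+T}} F(X_{t+s}^u, s\leq T)$, $M_t := m(x_0,0,t+T) = \mathbb{E}_{\delta_{x_0}}[N_{t+T}]$, and $G_u := F(X_{t+s}^u, s\leq T) - \Phi_0(t)$, the Many-to-One formula \eqref{eq:mto} gives $\mathbb{E}_{\delta_{x_0}}[\sum_u G_u] = 0$. The crux is the variance estimate
\begin{align*}
\mathbb{E}_{\delta_{x_0}}\left[\left(\sum_{u\in V_{t+T}} G_u\right)^2\right] = o(M_t^2), \qquad t\to\infty.
\end{align*}
Expanding $(\sum G_u)^2 = \sum G_u^2 + \sum_{u\neq v} G_u G_v$, the diagonal term is at most $4\|F\|_\infty^2 M_t$ by another application of Many-to-One, hence negligible since \eqref{mxst} and $\max(g,\beta) > q$ force $M_t$ to grow exponentially. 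For the off-diagonal term I would set up a Many-to-Two spine decomposition: condition on the time $\sigma$ and trait $\zeta$ at which the most recent common ancestor of $u$ and $v$ splits; after $\sigma$ the two subtrees evolve conditionally independently, so the inner expectation of $G_u G_v$ factorises as a product of two conditional expectations of the auxiliary process over the window of length $t+T-\sigma$ starting from $\zeta$. Applying Proposition \ref{prop:conv_auxi} to each factor writes them as $\Pi(F) + O(e^{-\overline{c}(t-\sigma)}\zeta)$, and the centring by $\Phi_0(t)$ kills the leading $\Pi(F)^2$ contribution, yielding the desired $o(M_t^2)$ bound.

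The final step converts the variance estimate into the advertised $\mathbb{L}_2$ convergence by handling the denominator $N_{t+T}$. Using Lemma \ref{lem_esp_Nt2} (or directly the positive martingale $N_{t+T}/M_t$ associated with the branching process of cells), $N_{t+T}/M_t$ converges in $\mathbb{L}_2(\delta_{x_0})$ to a random variable $W$ with $\{W>0\}$ equal almost surely to the non-extinction event. Fix $\delta > 0$; on $\{N_{t+T} \geq \delta M_t\}$ Chebyshev gives
\begin{align*}
\mathbb{P}_{\delta_{x_0}}\!\left(\mathbf{1}_{\{N_{t+T}\geq\delta M_t\}}\left|\frac{S_t}{N_{t+T}} - \Phi_0(t)\right| \geq \epsilon\right) \leq \frac{\mathbb{E}_{\delta_{x_0}}[(\sum_u G_u)^2]}{\epsilon^2 \delta^2 M_t^2} \xrightarrow[t\to\infty]{} 0,
\end{align*}
while $\mathbb{P}_{\delta_{x_0}}(0 < N_{t+T} < \delta M_t)$ can be made arbitrarily small uniformly in $t$ by sending $\delta \to 0$, thanks to the $\mathbb{L}_2$ limit $N_{t+T}/M_t \to W > 0$ on survival. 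The main technical hurdle is the Many-to-Two bound: producing a clean second-moment formula in the present time-inhomogeneous setting (linear birth rate $\alpha x + \beta$, constant death rate $q$, no stable jumps) and then integrating the exponential mixing from Proposition \ref{prop:conv_auxi} against the spine decomposition — the moment hypothesis $\int z^6\pi(dz) < \infty$ is presumably what makes the $\zeta$-dependence in the mixing bound integrable against the ancestor distribution.
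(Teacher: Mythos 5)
Your outline reconstructs, essentially from scratch, the mechanism behind the law of large numbers that the paper does not reprove: the paper's proof of Theorem \ref{thm:conv_auxi} is a one-line invocation of \cite[Corollary 3.7]{marguet2017law}, and all of its actual content consists in verifying the hypotheses of that corollary --- the ergodicity of the auxiliary process (Proposition \ref{prop:conv_auxi}, via the Foster--Lyapunov and minoration conditions of Lemma \ref{lemma:Lyapunov_min}), a moment bound on the auxiliary process at branching times (the unnamed lemma controlling $\sup_t\E_x[r(Y_t^{(t)})J((1\vee V)\varphi_t(x,\cdot))(Y_t^{(t)})]$, which is where $\int z^6\pi(dz)<\infty$ is consumed through the bound $\sup_{t}\sup_{s\le t}f_5^{(t)}(x,s)<\infty$), and the second-moment control of $N_t$ (Lemma \ref{lem_esp_Nt2}). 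Your Many-to-Two spine decomposition is exactly the engine inside the cited result, and you correctly identify where each hypothesis enters, so the two routes are the same argument packaged differently: the paper buys brevity at the price of an external dependency, while your version makes the variance computation explicit.

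Two points in your sketch are genuinely soft, however. First, $N_{t+T}/M_t$ is \emph{not} a martingale here: the division rate $r(x)=\alpha x+\beta$ depends on the trait, so $\E_{\delta_x}[N_t]$ is the mixture of two exponentials in \eqref{mxst} and $\mathbf{1}$ is not an eigenfunction of the first-moment semigroup. The existence of an $\mathbb{L}_2$ limit $W$ for $N_{t+T}/M_t$, and above all the nondegeneracy $\{W>0\}=\{\text{survival}\}$ a.s.\ that you need to make $\limsup_t\P(0<N_{t+T}<\delta M_t)$ small, is a Kesten--Stigum-type statement that does not follow from Lemma \ref{lem_esp_Nt2} alone (that lemma only gives $\E[N_t^2]/\E[N_t]^2\to c$, and in the regime $\beta>\max(g,q)$ one has $c>1$, so $N_t/M_t$ does not concentrate); it has to come either from the additive martingale built on the eigenfunction $x\mapsto\alpha x/(g-\beta)$ or from the cited reference. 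Second, the Many-to-Two bound is asserted rather than proved: in this time-inhomogeneous, trait-dependent setting the pair-correlation formula weights the splitting time $\sigma$ by $r(X^w_\sigma)\,m(\theta X^w_\sigma,\sigma,t+T)\,m((1-\theta)X^w_\sigma,\sigma,t+T)$, and integrating the error term $O(e^{-\overline c(t-\sigma)}\zeta)$ of Proposition \ref{prop:conv_auxi} against this kernel requires uniform-in-$(s,t)$ polynomial moment bounds on $Y^{(t)}_s$ up to order five; this is the single most technical step of the paper's verification and cannot be waved through. Neither issue is fatal --- both are resolved in the paper (or in \cite{marguet2017law}) --- but as written your final step rests on a martingale that does not exist and your central estimate on a formula you have not established.
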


This result from \cite{marguet2017law} ensures that asymptotically, the trajectory of the traits of a sampling along its ancestral lineage corresponds to the trajectory of the auxiliary 
process. Hence, the study of the asymptotic behaviour of the proportion of individuals satisfying some properties, such as the proportion of infected individuals, 
is reduced to the study of the time-inhomogenous process $Y$.

The proof of Theorem \ref{thm:conv_auxi} is a direct application of \cite[Corollary 3.7]{marguet2017law}. Assumptions 2.1, 2.3 and 2.4 in \cite{marguet2017law} are 
satisfied thanks to Assumption \ref{ass_A}, using \eqref{mxtbis} and \eqref{mxst}, and the fact that $\beta>0$. 
We proved that Assumption 3.1 in \cite{marguet2017law} is verified in Lemma \ref{lemma:Lyapunov_min}. It remains to check that Assumptions 3.4 and 3.6 in \cite{marguet2017law} are satisfied. Note that in our case, the function $c(x)$ defined in \cite[Equation 3.3]{marguet2017law} 
is equal to $\max(g,\beta)-q$ and the first point of Assumption 3.4 in \cite{marguet2017law} is satisfied.

Next, we set some notations, introduced in \cite{marguet2017law}. For all $x,y\geq 0$ and $s\geq 0$, we define
\begin{equation*}
\varphi_{s}(x,y)=\sup_{t\geq s} \frac{m(x,0,s)m(y,s,t)}{m(x,0,t)},
\end{equation*}
(which does not depend on $q$)
and for all measurable functions $f:\mathbb{R}_+\rightarrow\mathbb{R}$ and $x\geq 0$,
\begin{align*}
Jf(x)=2\int_{0}^1f\left(\theta x\right)f\left((1-\theta)x\right)\kappa(d\theta).
\end{align*}
The next lemma amounts to check the second point of Assumption 3.4 in \cite{marguet2017law}. 
\begin{lemma}
Under Condition \ref{ass_E}, suppose that Assumptions \ref{Ass_E'}, \ref{eq:limsup_sigma} hold, and that $\int_{\R_+}z^6\pi(dz)<\infty$ . 
Then, for all $x\geq 0$,
\begin{equation*}
\sup_{t\geq 0}\E_{x}\left[r\left(Y_{t}^{(t)}\right)J\left((1\vee V(\cdot))\varphi_{t}\left(x,\cdot\right)\right)\left(Y_{t}^{(t)}\right)\right]<\infty.
\end{equation*}
\end{lemma}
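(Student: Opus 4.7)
The plan is to reduce the integrand to a polynomial bound in $Y_t^{(t)}$ and then establish a uniform-in-$t$ moment bound on $Y_t^{(t)}$ via a Foster-Lyapunov argument on the time-inhomogeneous auxiliary process.

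First I would bound $\varphi_s(x,y)$ uniformly in $s$. Using \eqref{mxst} and writing $A_x = \alpha x/(g-\beta)$, $A_y = \alpha y/(g-\beta)$, one can factor out the dominant exponential to obtain, in the case $g>\beta$,
$$\frac{m(y,s,t)}{m(x,0,t)} = e^{-(g-q)s}\,\frac{A_y + (1-A_y)\,e^{-(g-\beta)(t-s)}}{A_x + (1-A_x)\,e^{-(g-\beta)t}},$$
with a symmetric expression when $g<\beta$. For $x>0$ the numerator is bounded by $|A_y|+|1-A_y| \leq C(1+y)$ and the denominator from below by a strictly positive constant $c(x)>0$, uniformly in $t\geq s$. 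Combined with the rough bound $m(x,0,s) \leq C(x) e^{(g\vee\beta - q)s}$ this yields
$$\varphi_s(x,y) \leq C(x)(1+y), \quad \text{for all } s\geq 0,\ y\geq 0$$
(the case $x=0$ being trivial since $Y_t^{(t)}\equiv 0$ under $\mathbb{P}_0$). Setting $h(y) = (1\vee y)\varphi_t(x,y) \leq C(x)(1+y)^2$, expanding the product $h(\theta z) h((1-\theta) z)$ in $\theta, 1-\theta \in [0,1]$ gives $Jh(z) \leq C'(x)(1+z^4)$, and multiplying by $r(z) = \alpha z + \beta$ yields $r(z)Jh(z) \leq C''(x)(1+z^5)$.

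It thus remains to show that $\sup_{t\geq 0} \mathbb{E}_x[(Y_t^{(t)})^5] < \infty$. I would establish this via a Foster-Lyapunov inequality of the form $\mathcal{A}_s^{(t)} V(y) \leq -aV(y) + d$ with $V(y)=y^5$ and constants $a,d>0$ independent of $s\leq t$; a Gronwall argument on $\mathbb{E}_x[V(Y_s^{(t)})]$ then yields the desired moment bound. Each contribution to $\mathcal{A}_s^{(t)} V$ is $O(y^5)$ for $y$ large: the drift $f_1(y,t-s)$ is uniformly bounded by $gy+C$ (since the $t$-dependent factor in \eqref{eq:f1} is at most $1/y$, and $\sigma^2(y)/y^2, p(y)/y^2$ are bounded by Assumption \ref{eq:limsup_sigma}); the diffusion contributes $\sigma^2(y)\cdot 20 y^3 = O(y^5)$; and the positive-jump compensator is $O(y^5)$ after using $(y+z)^5 - y^5 - 5zy^4 \leq C(y^3 z^2 + z^5)$, the uniform bound $m(y+z,s,t)/m(y,s,t) \leq 1+Cz$ for $y\geq 1$ (which follows from the explicit form of $m$), and $\int z^6\pi(dz)<\infty$. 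The decisive negative term is the division contribution $2(\alpha y+\beta) y^5 \int_0^1 (\theta^5-1) m(\theta y,s,t)/m(y,s,t)\,\kappa(d\theta)$. A direct monotonicity calculation in the variable $u = e^{-(g-\beta)(t-s)}$ shows that $m(\theta y, s, t)/m(y, s, t) \in [\theta, 1]$ uniformly in $y>0$ and $s\leq t$, so the integral is bounded above by $\int_0^1(\theta^6 - \theta)\kappa(d\theta) < 0$ and the contribution is of order $-c\alpha y^6$, dominating all other $O(y^5)$ terms for $y$ large.

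The main obstacle is the uniform-in-$(s,t)$ control of the two ratios $m(\theta y,s,t)/m(y,s,t)$ and $m(y+z,s,t)/m(y,s,t)$ entering the generator. Both are explicit rational functions of $e^{-(g-\beta)(t-s)}$, and the key observation is that the first ratio always lies in $[\theta, 1]$, which preserves the sign needed to extract a negative division contribution. The moment assumption $\int_{\mathbb{R}_+} z^6 \pi(dz) < \infty$ is precisely what is required to absorb the positive-jump contribution into the dominant division term.
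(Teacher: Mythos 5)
Your proposal is correct and follows essentially the same route as the paper: the same linear-in-$y$ bound on $\varphi_t(x,\cdot)$ reduces everything to $\sup_{t\geq 0}\E_x[(Y_t^{(t)})^5]<\infty$, and the same observation that $m(\theta y,s,t)/m(y,s,t)\geq\theta$ (equivalently $f_2(y,s,\theta)\geq 2\theta\alpha y$) extracts the decisive negative term of order $-cy^{6}$ from the division part, with $\int_{\R_+}z^6\pi(dz)<\infty$ absorbing the jump compensator. The only (harmless) variation is in closing the argument: the paper retains the sixth moment and uses Jensen's inequality $f_6^{(t)}\geq (f_5^{(t)})^{6/5}$ to get an autonomous ODE comparison, whereas you absorb $-cy^{6}+Cy^{5}$ pointwise into $-ay^{5}+d$ before taking expectations and apply Gronwall; both are valid.
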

\begin{proof}
Note that if $x=0$, $Y_t^{(t)}=0$ almost surely for all $t\geq 0$. Therefore, we only need to consider $x>0$. 
First assume that $\alpha>0$. Notice that for all $t\geq 0$ $x>0$, and $y\geq 0$, 
$$
\varphi_t(x,y)\leq \frac{\left(1+\frac{\alpha x}{|g-\beta|}\right)\left(1+\frac{\alpha y}{|g-\beta|}\right)}{\min\left(\frac{\alpha x}{|g-\beta|},1\right)},
$$
where we simplified by $e^{\max(g,\beta)t}$ in the fraction in the definition of $\varphi_s$.
Next, for all $x>0$, 
\begin{align*}
& \E_{x}\left[r\left(Y_{t}^{(t)}\right)J\left((1\vee V(\cdot))\varphi_{t}\left(x,\cdot\right)\right)\left(Y_{t}^{(t)}\right)\right]\\
& \leq \left(\frac{|g-\beta|+\alpha x}{\min\left(\alpha x,|g-\beta|\right)}\right)^2\E_{x}\left[\left(\alpha Y_{t}^{(t)}+\beta\right)
2\int_{0}^1\left(1\vee\theta Y_{t}^{(t)}\right)\left(1\vee(1-\theta) Y_{t}^{(t)}\right)\left(1+\frac{\alpha Y_{t}^{(t)}}{|g-\beta|}\right)^2\kappa(d\theta)\right]. 
\end{align*}
For all $k\geq 0$, we define
\begin{align*}
f_k^{(t)}(x,s)=\mathbb{E}_x\left[\left(Y_s^{(t)}\right)^k\right]
\end{align*}
and we end the proof of the lemma by showing that,
 $$\sup_{t\geq 0}\sup_{s\leq t}f_5^{(t)}(x,s)<\infty.$$
According to It\^o's formula, we have for $k\geq 2$,
\begin{align*}
f_k^{(t)}(x,s) = &  k\int_0^s \mathbb{E}_x\left[\left(Y_u^{(t)}\right)^{k-1}f_1\left(Y_u^{(t)},t-u\right)\right]du + k(k-1)\int_0^s \mathbb{E}_x\left[\sigma^2\left(Y_u^{(t)}\right)\left(Y_u^{(t)}\right)^{k-2}\right]du\\
& +\int_0^s \int_{\mathbb{R}_+}\mathbb{E}_x\left[f_2\left(Y_u^{(t)},t-u,\theta\right)\left(Y_u^{(t)}\right)^k(\theta^k-1)\right]\kappa(d\theta)du\\
& +\int_0^s \int_{\mathbb{R}_+}\mathbb{E}_x\left[f_3\left(Y_u^{(t)},t-u,z\right)\left(\left(Y_u^{(t)}+z\right)^k-\left(Y_u^{(t)}\right)^k-kz\left(Y_u^{(t)}\right)^{k-1}\right)\right]\pi(dz)du.
\end{align*}
Differentiating with respect to $s$ and using that for all $x\geq 0$ and $s\geq 0$, 
$$x^{k-1}f_1(x,s)\leq gx^k+2\sigma(x)^2x^{k-2}+p(x)x^{k-2}\int_{\R_+}z^2\pi(dz),\quad f_2(x,s,\theta)\geq 2\theta\alpha x\quad \text{for all} \quad \theta\in[0,1]$$
and 
$$f_3(x,s,z)\leq (x+z)p(x)/x \quad \text{for all} \quad z\geq 0,$$ 
and applying Taylor's formula with integral remainder, we obtain 
\begin{align*}
\partial_s f_k^{(t)}(x,s)\leq &  gk \mathbb{E}_x\left[\left(Y_s^{(t)}\right)^k\right]
 + k\mathbb{E}_x\left[\left((k+1)\sigma^2\left(Y_s^{(t)}\right)+p\left(Y_s^{(t)}\right)\int_{\R_+}z^2\pi(dz)\right)\left(Y_s^{(t)}\right)^{k-2}\right]\\
& - \int_{\mathbb{R}_+}\mathbb{E}_x\left[2\alpha \left(Y_s^{(t)}\right)^{k+1}\theta(1-\theta^k)\right]\kappa(d\theta)\\
& +k(k-1) \int_{\mathbb{R}_+}\int_0^z(z-u)\mathbb{E}_x\left[\frac{p\left(Y_s^{(t)}\right)}{Y_s^{(t)}}\left(Y_s^{(t)}+u\right)^{k-2}\left(Y_s^{(t)}+z\right)\right]du \pi(dz).
\end{align*}
Moreover, for all $y> 0$,
\begin{align*}
&\int_{\mathbb{R}_+}\int_0^z(z-u)\frac{p\left(y\right)}{y}\left(y+u\right)^{k-2}\left(y+z\right)du \pi(dz) \\
& \leq  \int_{\mathbb{R}_+}z^2\frac{p(y)}{y}\left(y+z\right)^{k-1} \pi(dz)
=  \frac{p(y)}{y}\int_{\mathbb{R}_+}z^2\sum_{l = 0}^{k-1}\dbinom{k-1}{l}y^lz^{k-1-l} \pi(dz)\\
& = \frac{p(y)}{y}\int_{\mathbb{R}_+}z^2\sum_{l = 0}^{k-2}\dbinom{k-1}{l}y^lz^{k-1-l} \pi(dz)+\frac{p(y)}{y^2}\left(\int_{\R_+}z^2\pi(dz)\right)y^{k}.
\end{align*}
Combining the last two inequalities, we get 
\begin{align*}
\partial_sf_k^{(t)}(x,s)\leq & k( A_t^{(k)}+B_t^{(k)}-C_t^{(k)}+D_t^{(k)}),
\end{align*}
with

$$ A_t^{(k)} = gf_k^{(t)}(x,s),\ 
B_t^{(k)} = \mathbb{E}_x\left[H(k,Y_s^{(t)})(Y_s^{(t)})^{k}\right],\ C_t^{(k)} =\frac{2\alpha}{k} \mathbb{E}\left[\Theta(1-\Theta^k)\right]f_{k+1}^{(t)}(x,s),
$$
$$D_t^{(k)} =(k-1) \int_{\mathbb{R}_+}\mathbb{E}_x\left[\frac{p(Y_s^{(t)})}{\left(Y_s^{(t)}\right)^2}\sum_{l = 0}^{k-2}\dbinom{k-1}{l}(Y_s^{(t)})^{l+1}z^{k+1-l}\right] \pi(dz),
$$
where
$$ H(k,y) = (k+1)\frac{\sigma^2(y)}{y^2}+k\frac{p(y)}{y^2}\int_{\R_+}z^2\pi(dz).$$
To end the proof we consider the case $k=5$.
According to Assumption \ref{eq:limsup_sigma} and using that $\sigma$ and $p$ are continuous (Assumption \ref{ass_A}), there exist $C_1,C_2>0$ and $A>0$ such that for all $y\geq 0$,
\begin{align*}
H(5,y)y^{5} = H(5,y)y^5\mathbf{1}_{\lbrace y>A\rbrace}+H(5,y)y^5\mathbf{1}_{\lbrace y\leq A\rbrace}&\leq C_1y^5\mathbf{1}_{\lbrace y>A\rbrace}
+C_2\mathbf{1}_{\lbrace y\leq A\rbrace}\leq C_1y^5+C_2.
\end{align*}
Moreover, $\limsup_{0^+}p(x)/x<\infty$ as $p(0)=0$ and $p$ is locally Lipschitz, $\limsup_\infty p(x)/x^2<\infty$ thanks to Assumption \ref{eq:limsup_sigma} 
and $\int_{\R_+}z^6\pi(dz)<\infty$, which yields
\begin{align*}
D_t ^{(5)}\leq C_3  (f_5^{(t)}(x,s) + 1),
\end{align*}
for some $C_3\geq 0$. Combining the last two inequalities, there exists $D_1>0$ such that
\begin{align*}
\partial_sf_5^{(t)}(x,s)\leq  D_1 (f_5^{(t)}(x,s)+1)-D_2f_{6}^{(t)}(x,s),
\end{align*}
where $D_2 = \frac{2\alpha}{5} \mathbb{E}\left[\Theta(1-\Theta^5)\right]$.
Applying Jensen inequality, we have $f_6^{(t)}(x,s)\geq f_5^{(t)}(x,s)^{6/5}$. Finally, we obtain
\begin{align*}
\partial_sf_5^{(t)}(x,s)\leq  F\left(f_5^{(t)}(x,s)\right),
\end{align*}
with $F(y) = D_1(y+1)-D_2y^{1+1/5}$. Any solution to the equation $y'=F(y)$ is bounded by $y(0)\vee x_0$, where $x_0 = \left(5D_1/6D_2\right)^5$
and so is $f_5^{(t)}(x,\cdot)$. It ends the proof for this case.
\end{proof}

Finally, we need to control the value of the second moment of the population size relatively to the square of its mean.
It corresponds to Assumption 3.6 in \cite{marguet2017law}.

\begin{lemma} \label{lem_esp_Nt2}
Suppose that Assumption \ref{ass_E} holds.
Then
for all $x> 0$, 
\begin{itemize}
\item[{\it (i)}] if $\beta> \max(g,q)$,
\begin{align*}
\mathbb{E}_{\delta_x}\left[N_t^2\right] \underset{t \to \infty}{\sim} C_1^2(x)e^{2(\beta-q)t},\quad 
\mathbb{E}_{\delta_x}^2\left[N_t\right] \underset{t \to \infty}{\sim} \left(1 + \frac{\alpha x}{\beta-g}\right)^2e^{2(\beta-q)t},
\end{align*}
where 
$$C_1^2(x)=1 + \frac{\alpha x}{2\beta-g-q} - \frac{\alpha^2 x^2}{(\beta-g)^2}
+\left(1+\frac{\alpha x}{\beta-g}\right)\left(\frac{2\alpha x}{\beta-g}+\frac{\beta+q}{\beta-q}\right)+\frac{\alpha x}{g-\beta}\frac{(\beta+q)}{2\beta-g-q}.$$
\item[{\it (ii)}] if $g>\max(\beta,q)$ and $\alpha>0$,
\begin{align*}
\mathbb{E}_{\delta_x}\left[N_t^2\right] \underset{t \to \infty}{\sim}\mathbb{E}_{\delta_x}^2\left[N_t\right] \underset{t \to \infty}{\sim} 
\left(\frac{\alpha x}{\beta-g}\right)^2e^{2(g-q)t},
\end{align*}
\end{itemize}
\end{lemma}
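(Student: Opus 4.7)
For the squared first moment, the asymptotic follows directly from squaring the explicit expression of $n(x,t)$ given in \eqref{mxst}. In case $(i)$, $\beta-q>g-q$ makes the $e^{(\beta-q)t}$ term dominant and yields $n(x,t)\sim(1+\alpha x/(\beta-g))e^{(\beta-q)t}$; in case $(ii)$, the other exponential dominates and $n(x,t)\sim(\alpha x/(g-\beta))e^{(g-q)t}$. Squaring gives the stated equivalents for $\mathbb{E}^2_{\delta_x}[N_t]$.

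For the second moment, my plan is to set up a cascade of linear ODEs for the joint moments. Applying It\^o's formula to $N_t^2$ and to $N_t\langle Z_t,\mathrm{Id}\rangle$ in the SDE of Proposition \ref{pro_exi_uni}, using that $g(y)=gy$, that the symmetry of $\kappa$ makes division $\mathrm{Id}$-preserving in expectation, and that $q$ is constant, one derives
\begin{align*}
\partial_t \mathbb{E}_{\delta_x}[N_t^2]&=2(\beta-q)\mathbb{E}_{\delta_x}[N_t^2]+2\alpha\phi(x,t)+\alpha m_1(x,t)+(\beta+q)n(x,t),\\
\partial_t\phi(x,t)&=(g+\beta-2q)\phi(x,t)+\alpha M_t(\mathrm{Id}^2)(x)+q\, m_1(x,t),
\end{align*}
where $\phi(x,t)=\mathbb{E}_{\delta_x}[N_t\langle Z_t,\mathrm{Id}\rangle]$ and $m_1(x,t)=xe^{(g-q)t}$ (from $\mathcal{L}\mathrm{Id}=(g-q)\mathrm{Id}$). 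Equivalently, one can bypass $M_t(\mathrm{Id}^2)$ by computing the second factorial moment through the many-to-two formula
\[
\mathbb{E}_{\delta_x}[N_t(N_t-1)]=\int_0^t M_s\!\Big[2r(\cdot)\int_0^1 n(\theta\,\cdot,t-s)\,n((1-\theta)\,\cdot,t-s)\,\kappa(d\theta)\Big](x)\,ds.
\]
Since $n(y,u)=A(u)y+B(u)$ is linear in $y$, the bracket expands as a polynomial of degree at most $3$ in $y$ whose coefficients depend only on $A(t-s),B(t-s),\alpha,\beta$ and $\mu_1:=\mathbb{E}[\Theta(1-\Theta)]$.

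Extracting the leading asymptotics then proceeds by Duhamel integration. In case $(i)$, both $A(u)$ and $B(u)$ grow like $e^{(\beta-q)u}$, so factoring $e^{2(\beta-q)t}$ yields an integral of the form $e^{2(\beta-q)t}\int_0^t e^{-2(\beta-q)s}\Psi(s,x)\,ds$. The contributions of $M_s(y^2)$ and $M_s(y^3)$ are absorbed into the $o$-remainder because, in case $(i)$, one checks recursively from the moment ODEs for $M_s(y^k)$ that $M_s(y^k)$ grows at a rate strictly below $2(\beta-q)$, the extra factor $e^{(g-\beta)s}$ coming from the gap between the parasite growth rate $g$ and the population growth rate $\beta-q$. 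The explicit $y^0$ and $y^1$ contributions, added to $n(x,t)$, are then computed by direct integration and regrouped algebraically to match the six terms of $C_1^2(x)$ in the statement. Case $(ii)$ is handled analogously with the dominant rate $2(g-q)$: all lower-order contributions collapse into $o(e^{2(g-q)t})$ and the leading piece assembles exactly to $(\alpha x/(\beta-g))^2 e^{2(g-q)t}$, showing that the variance of $N_t$ is of lower exponential order than $n(x,t)^2$.

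The main obstacle will be the control of the higher moments $M_s(y^2)$ and $M_s(y^3)$, which are not determined by Assumption \ref{ass_E} alone and in general involve $\sigma$ and $p$. The crux is therefore to verify that their exponential growth rates are strictly smaller than the rates appearing in the leading term, so that their contributions drop out, and to keep careful bookkeeping of the algebraic identities among $\alpha,\beta,g,q$ when integrating the Duhamel kernels; this is what produces the precise constant $C_1^2(x)$ rather than a mere exponential equivalent.
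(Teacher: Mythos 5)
Your treatment of $\mathbb{E}_{\delta_x}^2[N_t]$ is correct and coincides with the paper's. For $\mathbb{E}_{\delta_x}[N_t^2]$, however, your route departs from the paper's and has a genuine gap. The paper does not introduce any cross-moment or trait moment: it writes a single closed linear ODE,
\[
\frac{d}{dt}\mathbb{E}_{\delta_x}\left[N_t^2\right]=2(\beta-q)\,\mathbb{E}_{\delta_x}\left[N_t^2\right]+\alpha x e^{(g-q)t}\left(1+2\,\mathbb{E}_{\delta_x}[N_t]\right)+(\beta+q)\,\mathbb{E}_{\delta_x}[N_t],
\]
obtained by replacing the aggregate division rate $\alpha\langle Z_t,\mathrm{Id}\rangle+\beta N_t$ by $\alpha x e^{(g-q)t}+\beta N_t$ inside the expectation, and then solves it explicitly by variation of constants using \eqref{mxst}; the six terms of $C_1^2(x)$ are read off from that explicit solution. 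Your plan instead keeps the cross-moment $\phi(x,t)=\mathbb{E}_{\delta_x}[N_t\langle Z_t,\mathrm{Id}\rangle]$ (or, equivalently, the many-to-two formula), which forces $M_s(\mathrm{Id}^2)$ and $M_s(\mathrm{Id}^3)$ into the computation. (A minor slip: the forcing term in your equation for $\phi$ should be $\alpha\,\mathbb{E}_{\delta_x}[\langle Z_t,\mathrm{Id}\rangle^2]$, not $\alpha M_t(\mathrm{Id}^2)$, since each division increments $N\langle Z,\mathrm{Id}\rangle$ by the whole mass $\langle Z,\mathrm{Id}\rangle$.)

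The gap is the assertion that the contributions of $M_s(\mathrm{Id}^2)$ and $M_s(\mathrm{Id}^3)$ are ``absorbed into the $o$-remainder.'' Even granting the growth-rate control you invoke (which is itself only asserted, and which you concede is not available from Assumption \ref{ass_E} alone), the conclusion fails. In the many-to-two formula the coefficient of $y^3$ is $2\alpha\,\mathbb{E}[\Theta(1-\Theta)]\,A(t-s)^2$ with $A(u)=\frac{\alpha}{\beta-g}\bigl(e^{(\beta-q)u}-e^{(g-q)u}\bigr)$, so in case $(i)$ the kernel is of exact order $e^{2(\beta-q)(t-s)}$. If $M_s(\mathrm{Id}^3)$ grows at a rate strictly below $2(\beta-q)$, its Duhamel contribution is
\[
e^{2(\beta-q)t}\int_0^t e^{-2(\beta-q)s}\left(e^{-2(\beta-q)(t-s)}A(t-s)^2\right)2\alpha\,\mathbb{E}[\Theta(1-\Theta)]\,M_s(\mathrm{Id}^3)\,ds,
\]
which converges to a \emph{strictly positive} constant times $e^{2(\beta-q)t}$, not to $o\bigl(e^{2(\beta-q)t}\bigr)$; the same holds for the $M_s(\mathrm{Id}^2)$ term. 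These contributions therefore feed into the leading constant, which would then involve $\sigma$, $p$, $\pi$ and $\mathbb{E}[\Theta(1-\Theta)]$ — none of which appear in the stated $C_1^2(x)$. So your bookkeeping cannot ``assemble exactly'' to $C_1^2(x)$: to land on the statement as written you must, as the paper does, decouple $\langle Z_t,\mathrm{Id}\rangle$ from $N_t$ at the level of the generator — a substitution that your own (more scrupulous) setup shows is precisely the step requiring justification.
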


\begin{proof}
According to It\^o's formula, we have for all $t\geq 0$ and $x> 0$,

\begin{align*}
\frac{d\mathbb{E}_{\delta_x}\left[N_t^2\right]}{dt}& =  \mathbb{E}_{\delta_x}\left[\left(\alpha x e^{(g-q)}t+\beta N_t\right)\left(1+2N_t\right)\right]
+  \mathbb{E}_{\delta_x}\left[q N_t\left(1-2N_t\right)\right]\\
 & =\alpha xe^{(g-q)}t+2\alpha x e^{(g-q)}t\mathbb{E}_{\delta_x}[N_t]+(\beta+q)\mathbb{E}_{\delta_x}[N_t]+2(\beta-q) \mathbb{E}_{\delta_x}\left[N_t^2\right]. 
\end{align*}
Using \eqref{mxst} and variation of constants we obtain
\begin{align*}
\E_{\delta_x}\left[N_t^2\right] = & e^{2(\beta-q)t} +  \alpha x\frac{\left(e^{(g-q)t}-e^{2(\beta-q)t}\right)}{g-2\beta+q} + \frac{2\alpha^2 x^2}{g-\beta}\frac{\left(e^{2(g-q)t}-e^{2(\beta-q)t}\right)}{2(g-\beta)}\\
&+2\alpha x\left(1-\frac{\alpha x}{g-\beta}\right)\frac{\left(e^{(g+\beta-2q)t}-e^{2(\beta-q)t}\right)}{g-\beta}+(\beta+q)\frac{\alpha x}{g-\beta}\frac{\left(e^{(g-q)t}-e^{2(\beta-q)t}\right)}{g-2\beta+q}\\
& - \left(1-\frac{\alpha x}{g-\beta}\right)(\beta+q)\frac{\left(e^{(\beta-q)t}-e^{2(\beta-q)t}\right)}{\beta-q}.
\end{align*}
Therefore, 
$$
\begin{array}{ll}
\text{if }g>\max(\beta,q),& \E_{\delta_x}\left[N_t^2\right]\underset{t\rightarrow +\infty}{\sim}\frac{\alpha^2x^2}{(g-\beta)^2}e^{2(g-q)t}\\
\text{if }\beta>\max(g,q), & \E_{\delta_x}\left[N_t^2\right]\underset{t\rightarrow +\infty}{\sim}C_1^2(x)e^{2(\beta-q)t}.
\end{array}
$$
Moreover, 
\begin{align*}
\E_{\delta_x}\left[N_t\right]^2 = \frac{\alpha^2 x^2}{(g-\beta)^2}e^{2(g-q)t}+ \left(1-\frac{\alpha x}{g-\beta}\right)^2e^{2(\beta-q)t}-\frac{\alpha x }{\beta-g}\left(1-\frac{\alpha x}{g-\beta}\right)e^{g+\beta-2q},
\end{align*}
so that
$$
\begin{array}{ll}
\text{if }g>\max(\beta,q),& \E_{\delta_x}\left[N_t\right]^2\underset{t\rightarrow +\infty}{\sim}\frac{\alpha^2x^2}{(g-\beta)^2}e^{2(g-q)t}\\
\text{if }\beta>\max(g,q), & \E_{\delta_x}\left[N_t\right]^2\underset{t\rightarrow +\infty}{\sim}\left( 1+\frac{\alpha x }{\beta-g}\right)^2e^{2(\beta-q)t}.
\end{array}
$$
\end{proof}

We checked that all required assumptions to apply \cite[Corollary 3.4]{marguet2017law} are satisfied. This ends the proof of Theorem \ref{thm:conv_auxi}.

\subsubsection{Proof of Proposition \ref{prop:temps_long_auxi}}
 
We first prove point {\it ii)}.
The first step consists in proving that for every $x\geq 0$,
\begin{equation} \label{abs_finitetime_Yt}
 \P_x(\exists t<\infty, Y^{(t)}_t=0)=1. 
\end{equation}
A direct application of (6.3) in \cite[Theorem 6.2]{companion} is not possible 
because of the time-inhomogeneity of the process $Y^{(t)}$. Therefore, we couple $Y^{(t)}$
with a process $(\hat{Y}_s,s\geq 0)$ defined as the unique strong solution to
\begin{align*}
% \nonumber\label{eq:hatY}
\hat{Y}_s = & Y_0^{(t)}+ \int_0^s\hat{g}(\hat{Y}_u)du + \int_0^s\sqrt{2\sigma^2(\hat{Y}_u)}dB_u 
+ \int_0^s\int_0^1(\theta-1)\int_{0}^{\hat{r}(\hat{Y}_u,\theta)}\hat{Y}_u N(du,d\theta,dx)\\
& + \int_0^s \int_0^\infty\int_0^{f_3(\hat{Y}_u,t-s,z)}z\widetilde{Q}(du,dz,dx),
\end{align*}
where $f_3(y,t-s,z)= 0$ for $s \geq t$, $B$, $N$ and $\tilde{Q}$ are the same as in \eqref{eq:EDSauxi} and for $x,s\geq 0$, $0 \leq \theta\leq 1$,
\begin{align*}
&f_1(x,s)\leq \hat{g}(x): =
gx+ \left(\mathbf{1}_{\lbrace\beta>g\rbrace}\frac{\alpha}{\beta-g+ \alpha x}+\mathbf{1}_{\lbrace g>\beta\rbrace}\frac{1}{x}\right)\left( 2\sigma^2(x)+ p(x)\int_{\R_+}z^2\pi(dz)\right)\\ 
&f_2(x,s,\theta)\geq \hat{r}(x,\theta):= 2\theta(\alpha x+\beta).
\end{align*}
Then, for all $t\geq 0$ and $0\leq s \leq t$, 
$
Y_s^{(t)} \leq \hat{Y}_s. 
$
In particular, for all $t\geq 0$,
\begin{align}\label{couplage_t}
Y_t^{(t)} \leq \hat{Y}_t. 
\end{align}
According to Lemma \ref{chgt-eds}, there exists a Poisson point measure $N'$ on $\mathbb{R}_+\times [0,1]\times\mathbb{R}_+$ with intensity 
$du\otimes\hat{\kappa}(d\theta)\otimes dx$ where $\hat{\kappa}(d\theta)=2\theta\kappa(d\theta)$, such that $\hat{Y}$ is also a strong pathwise solution to
\begin{multline*}
% \label{eq:hatY}
\hat{Y}_s =  Y_0^{(t)}+ \int_0^s\hat{g}\left(\hat{Y}_u\right)du + \int_0^s\sqrt{2\sigma^2\left(\hat{Y}_u\right)}dB_u 
+ \int_0^s\int_{0}^{r(\hat{Y}_u)}\int_0^1(\theta-1)\hat{Y}_u N'(du,dx,d\theta)\\
 + \int_0^s \int_0^\infty\int_0^{f_3(\hat{Y}_u,t-s,z)}z\widetilde{Q}(du,dz,dx),
\end{multline*}
where we used that $\int_0^12\theta\kappa(d\theta)=1$ because $\kappa$ is symmetrical with respect to $1/2$.

Let us check that despite the fact that the jump rate $f_3$ depends on jump size and time, 
(6.3) in \cite[Theorem 6.2]{companion} holds for $\hat{Y}$. First, we need to prove that \cite[Lemma 7.2]{companion} still holds under these modifications.
Let us choose $x_0>0$ and introduce $\tau^-(x_0):= \inf \{t \geq 0, \hat{Y}_t \leq x_0\}$.
Following the same steps as in the proof of \cite[Lemma 7.2]{companion} we have,
\begin{align}\label{eq:lnhatY}\nonumber
\ln\hat{Y}_{t\wedge \tau^-(x_0)} = &  \ln\hat{Y}_{0}+\int_{0}^{t\wedge \tau^-(x_0)}\frac{\hat{g}(\hat{Y}_s)}{\hat{Y}_s}ds-\int_{0}^{t\wedge \tau^-(x_0)}
\frac{\sigma^2(\hat{Y}_s)}{\hat{Y}_s^2}ds
+\E[\ln \hat{\Theta}]\int_{0}^{t\wedge \tau^-(x_0)}r(\hat{Y}_s)ds \\
+&\int_{0}^{t\wedge \tau^-(x_0)} \int_0^\infty f_3(\hat{Y}_s,t-s,z)\left[\ln\Big(1+\frac{z}{\hat{Y}_{s}}\Big)-\frac{z}{\hat{Y}_s}\right]\pi(dz)ds+M_{t\wedge \tau^-(x_0)}
\end{align}
where $\hat{\Theta}$ is a random variable with law $\hat{\kappa}$ and $(M_{s \wedge \tau^-(x_0)}, s \geq 0)$ is a martingale.
Let us check that the condition {\bf (LSG)} of \cite{companion} is satisfied, {\it i.e.} that 
\begin{enumerate}[label=\bf{(LSG)}]
\item \label{LSG} There exist $\eta>0$ and $x_0>0$ such that for all $x>x_0$,
\begin{equation*} 
\hat{H}(x):=\frac{\hat{g}(x)}{x} - \frac{\sigma^2(x)}{x^2}+r(x)\E\left[\ln \hat{\Theta} \right] 
+p(x)\int_{\mathbb{R}_+} \left(\ln\Big(1+\frac{z}{x}\Big)-\frac{z}{x}\right)\pi(dz)\leq -\eta.
\end{equation*}
\end{enumerate}
We have
\begin{align*}
\hat{H}(x) &\leq g + \left(\frac{\mathbf{1}_{\{\beta>g\}}\alpha x}{\beta-g + \alpha x} + \mathbf{1}_{\{g \geq\beta\}}\right)\left(2\frac{\sigma^2(x)}{x^2} +
\frac{p(x)}{x^2}\int_{\R_+}z^2\pi(dz)
\right)+2(\alpha x+\beta)\E\left[\Theta\ln \Theta \right]\\
& \leq g + C\left(\frac{\mathbf{1}_{\{\beta>g\}}\alpha x}{\beta-g + \alpha x} + \mathbf{1}_{\{g \geq\beta\}}\right)+2(\alpha x+\beta)\E\left[\Theta\ln \Theta \right],
\end{align*}
where $C$ is a finite constant, according to Assumptions \ref{ass_A} and \ref{eq:limsup_sigma}.
As $\E\left[\Theta\ln \Theta \right]<0$, we deduce that the condition \ref{LSG} is satisfied.

Moreover, notice that the dependence on jump size and time for the jump rate $f_3$ does not modify 
the proof of \cite[Lemma 7.2]{companion}, as the last term in \eqref{eq:lnhatY} is still negative.
Finally, we check that \cite[Theorem 4.1{\it i)}]{companion} holds for $\hat{Y}$. First, similarly as in the proof of Proposition \ref{prop_extin_auxi}, we check that \cite[Theorem 4.1{\it i)}]{companion} 
still holds if the rate of positive jumps depends on time and jumps sizes and if for $x,z\geq 0$ and $s\leq t$, it satisfies \eqref{bound_f3}.
Note that this condition is satisfied by the positive jump rate $f_3$ of $\hat{Y}$. Adapting the proof of Proposition \ref{prop_extin_auxi} to this case, we get that 
\eqref{eq:limsup_p} becomes
\begin{align*}
\limsup_{x\rightarrow\infty}x^{-2}\int_{\mathbb{R}_+}p_t(x,s,z) z^2\left(\int_0^1(1+zx^{-1}v)^{-1-a}(1-v)dv\right)\pi(dz)\\
\leq C\limsup_{x\rightarrow\infty}\frac{p(x)}{x^2}\int_{\mathbb{R}_+}(z^2+z^3) \pi(dz)<\infty
\end{align*}
for some finite and positive $C$, combining Assumption \ref{eq:limsup_sigma}, \ref{ass_E} and the fact that $\int_{\mathbb{R}_+}z^6\pi(dz)<\infty$. 
Concluding as in the beginning of the proof of 
Proposition \ref{prop_extin_auxi}, we get the desired generalization of \cite[Theorem 4.1{\it i)}]{companion}. 
To apply this generalized result to $\hat{Y}$ we need to check that condition \ref{SNinfinity} is satisfied for $\hat{Y}$. And it is the case according to Assumption \ref{eq:limsup_sigma} as
\begin{align*}
\frac{\hat{g}(x)}{x}
= g  + \left(\mathbf{1}_{\{\beta>g\}}\frac{\alpha x}{\beta-g + \alpha x} + \mathbf{1}_{\{g>\beta\}}\right)\left(2\frac{\sigma^2(x)}{x^2} +
\frac{p(x)}{x^2}\int_{\R_+}z^2\pi(dz)
\right).
\end{align*}

Therefore, \cite[Lemma 7.2]{companion} holds for $\hat{Y}$.

The proof of \cite[Eq.~(6.3)]{companion} (p.23 of \cite{companion}) requires \cite[Eq.(7.18)]{companion}. To prove \cite[Eq.(7.18)]{companion} for $\hat{Y}$, the only difference is that we have to deal with the dependence on the jump size of the jump rate $f_3$ to obtain a lower bound on the probability to have no positive jump during a time interval of the form $[0,t\wedge\tau^+(y)\wedge \tau^-(x)]$ with $0< x < y$.
Hence the idea is to bound the expectation of the sum of positive jumps on $[0,t\wedge\tau^+(y)\wedge \tau^-(x)]$ and use Markov inequality. Let $T= \tau^+(y)\wedge \tau^-(x)$. Then, for any $y_0\in (x,y)$,
\begin{align*}
\P_{y_0}\left(\hat{Y}_{t\wedge T}\geq y\right)\leq \P_{y_0}&\left(\int_0^{t\wedge T} g(\hat{Y}_s)ds +I_Q(t\wedge T)+\int_0^{t\wedge T} \sqrt{2\sigma^2(\hat{Y}_s)}dB_s\geq y-y_0\right),
\end{align*} 
where
\begin{align}\label{eq:twojumpint}\nonumber
I_Q(t\wedge T)&:=\int_0^{t\wedge T}\int_{\R_+}\int_0^{f_3(\hat{Y}_s,t-s,z)}zQ(ds,dz,dx)\\\nonumber
&\leq \int_0^{t\wedge T}\int_{0}^{\hat{Y}_s}\int_0^{2p(\hat{Y}_s)}zQ(ds,dz,dx) + \int_0^{t\wedge T}\int_{\hat{Y}_s}^\infty\int_0^{2zp(\hat{Y}_s)/\hat{Y}_s}zQ(ds,dz,dx)\\
&\leq \int_0^{t\wedge T}\int_{\R_+}\int_0^{2p(\hat{Y}_s)}zQ(ds,dz,dx) + \int_0^{t\wedge T}\int_{\R_+}\int_0^{2zp(\hat{Y}_s)/\hat{Y}_s}zQ(ds,dz,dx).
\end{align}
Then, as in \cite{companion} p.17, if we denote by $J(t,x,y)$ the event of having no
positive jumps due to the first integral in \eqref{eq:twojumpint}, we have for all $y_0\in (x,y)$,
\begin{align*}
\P_{y_0}(J(t,x,y))\geq e^{-2p(y)},
\end{align*}
because $p$ is non-decreasing according to Assumption \ref{ass_A}. Next,
\begin{align*}
& \P_{y_0}\left(\hat{Y}_{t\wedge T}\geq y, J(t,x,y)\right)\\
& \leq \P_{y_0}\left(\int_0^{t\wedge T} g(\hat{Y}_s)ds +\int_0^{t\wedge T}\int_0^{2p(\hat{Y}_s)/\hat{Y}_s}\int_{\R_+}z\overline{Q}(ds,dx,dz)+\int_0^{t\wedge T} \sqrt{2\sigma^2(\hat{Y}_s)}dB_s\geq y-y_0\right),
\end{align*}
where $\overline{Q}$ is a Poisson Point measure with intensity $ds\otimes dx\otimes z\pi(dz)$ (see Appendix \ref{app:generalization SDE}). Considering as before the event that there is no positive jumps associated to $\overline{Q}$, we get
\begin{align*}
\P_{y_0}\left(\hat{Y}_{t\wedge T}\geq y\right)\leq 2-e^{-2tp(y)}-e^{-2tp(y)/y}+ \P_{y_0}\left(\int_0^{t\wedge T} g(\hat{Y}_s)ds +\int_0^{t\wedge T} \sqrt{2\sigma^2(\hat{Y}_s)}dB_s\geq y-y_0\right).
\end{align*}
We conclude as in \cite{companion}, using that $\sup_{x \leq v \leq z}\hat{g}(v)<\infty$ according to Assumption \ref{ass_A} and $\inf_{x \leq v \leq y}r(v)>0$ holds under Assumption \ref{ass_E}.

Hence \cite[Eq.~(6.3)]{companion} holds for $\hat{Y}$, and \eqref{couplage_t} gives \eqref{abs_finitetime_Yt}.
Applying Theorem~\ref{thm:conv_auxi} to the function $F(X_{t+s}^u, s \leq T) = \mathbf{1}_{\{X_{t+T}^u>0\}}$ concludes the proof of point {\it ii)}.\\

Let us now prove the first part of point {\it i)}. Let $K>0$. First, as the assumptions of Proposition \ref{prop:conv_auxi} are satisfied, $Y_t^{(t)}$ converges to a nondegenerate random variable, 
which implies that for all $x\geq 0$
\begin{align}
\label{conv_K}
\lim_{K\rightarrow \infty}\lim_{t\rightarrow \infty}\mathbb{P}(Y_t^{(t)}>K|Y_0^{(t)}=x)=0.
\end{align}
Let $\varepsilon>0$. By Markov inequality,
\begin{align}\label{eq:markov_mto}
\mathbb{P}_{\delta_x}\left(  \frac{\sum_{u \in V_t} \mathbf{1}_{\{X_t^u >K\}} }{m(x,0,t)}>\eps \right)
&\leq \eps^{-1}\E_{\delta_x}\left[\frac{\sum_{u \in V_t} \mathbf{1}_{\{X_t^u >K\}}}{m(x,0,t)}\right]
= \eps^{-1}\mathbb{P}_x\left(Y_t^{(t)}>K\right)
\end{align}
where the last inequality comes from \eqref{eq:mto} applied to the function 
$$F((X_s^u,s \leq t))= \mathbf{1}_{\{X_t^u >K\}}.$$
Thus, taking the limit in \eqref{eq:markov_mto} in $t$ and $K$ yields the result.

We end with the second part of point {\it i)}. Conditions of Theorem \ref{thm:conv_auxi} are satisfied. Hence, taking again the same function $F$ we obtain 
for any $x_0,x_1,K \geq 0$, 
\begin{equation*}
 \lim_{t \to \infty} \E_{\delta_{x_0}} \left[ \mathbf{1}_{\{N_t\geq 1\}} \left|
 \frac{\sum_{u \in V_t} \mathbf{1}_{\{X_t^u >K\}} }{N_t}-\mathbb{P}_{x_1}(Y_t^{(t)}>K) \right|^2 \right] = 0
\end{equation*}
Let $\eps>0$. From \eqref{conv_K} we know that there exists $t_0,K_0 \geq 0$ such that for any $K \geq K_0$ and $t \geq t_0$,
$$ \mathbb{P}_{x_1}(Y_t^{(t)}>K) \leq \eps/2. $$
We thus obtain the following series of inequalities:
\begin{align*}
& \P_{x_0} \left(\mathbf{1}_{\{N_t\geq 1\}}
 \frac{\sum_{u \in V_t} \mathbf{1}_{\{X_t^u >K\}} }{N_t}> \eps\right) \\
&= \P_{x_0} \left(\mathbf{1}_{\{N_t\geq 1\}} \left(
 \frac{\sum_{u \in V_t} \mathbf{1}_{\{X_t^u >K\}} }{N_t}-\mathbb{P}_{x_1}(Y_t^{(t)}>K)\right) > \eps- \mathbf{1}_{\{N_t\geq 1\}} \mathbb{P}_{x_1}(Y_t^{(t)}>K)\right)\\ 
& \leq \P_{{x_0}} \left(\mathbf{1}_{\{N_t\geq 1\}} \left(
 \frac{\sum_{u \in V_t} \mathbf{1}_{\{X_t^u >K\}} }{N_t}-\mathbb{P}_{x_1}(Y_t^{(t)}>K)\right) > \eps/2\right)\\
& \leq\P_{{x_0}} \left(\mathbf{1}_{\{N_t\geq 1\}} \left(
 \frac{\sum_{u \in V_t} \mathbf{1}_{\{X_t^u >K\}} }{N_t}-\mathbb{P}_{x_1}(Y_t^{(t)}>K)\right)^2 > \eps^2/4\right)\\
 &\leq \frac{4}{\eps^2}  \E_{x_0} \left[ \mathbf{1}_{\{N_t\geq 1\}} \left|
 \frac{\sum_{u \in V_t} \mathbf{1}_{\{X_t^u >K\}} }{N_t}-\mathbb{P}_{x_1}(Y_t^{(t)}>K) \right|^2 \right] \underset{t \to \infty}{\to} 0.
\end{align*}
As the first term is increasing with $K$ we obtain the desired result.

\subsection{Proof of Proposition \ref{prop_constant_diff}}

Proposition \ref{prop_constant_diff} is a consequence of the following two lemmas.

\begin{lemma}\label{maj_mto}
 Assume that there exists a real number $\gamma$ such that $r(x)-q(x)\leq \gamma$ for any $x \in \R_+$, and let $f$ be a nonnegative measurable function on $\R_+$.
Then for $x,t \geq 0$,
$$ \E_{\delta_{x}}\left[\sum_{u\in V_{t}}f\left(X_{t}^{u}\right)\right]\leq e^{\gamma t}\E_{x}\left[f\left(Y_{t}\right)\right],
 $$
where we recall that $Y$ is the unique strong solution to the SDE \eqref{SDE_Y_diff_ct}.
\end{lemma}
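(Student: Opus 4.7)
The plan is to establish a multiplicative (Feynman--Kac) version of the Many-to-One formula,
\begin{equation*}
\mathbb{E}_{\delta_{x}}\left[\sum_{u\in V_{t}}f\left(X_{t}^{u}\right)\right] \;=\; \mathbb{E}_{x}\!\left[f(Y_t)\exp\!\left(\int_0^t \bigl(r(Y_s)-q(Y_s)\bigr)\,ds\right)\right],
\end{equation*}
with $Y$ the time-homogeneous spine of \eqref{SDE_Y_diff_ct}. Once this identity is at hand, the hypothesis $r(x)-q(x)\leq \gamma$ immediately bounds the exponential factor by $e^{\gamma t}$ pathwise, and taking expectations yields the claim. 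This is the same spirit as the identity $m(x,0,t)=e^{(r-q)t}$ used in Section~\ref{sec_auxi_ct}, but now the deterministic exponential weight is replaced by a $Y$-dependent one because $r-q$ is no longer constant.

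To obtain the identity I would compare the linear evolution equations satisfied by the two sides. The first-moment functional $u_t(x):=\mathbb{E}_{\delta_x}[\sum_{u\in V_t}f(X_t^u)]$ solves the forward equation
\begin{equation*}
\partial_t u_t(x) \;=\; \mathcal{G}u_t(x) + 2r(x)\int_0^1 u_t(\theta x)\,\kappa(d\theta) - \bigl(r(x)+q(x)\bigr)u_t(x),\qquad u_0=f,
\end{equation*}
where the factor $2$ encodes the symmetry of $\kappa$ between the two daughter cells. On the other hand, writing $c(x):=r(x)-q(x)$ and denoting by $\mathcal{A}$ the generator of $Y$,
\begin{equation*}
\mathcal{A}v(x) \;=\; \mathcal{G}v(x) + 2r(x)\int_0^1\bigl(v(\theta x)-v(x)\bigr)\kappa(d\theta),
\end{equation*}
the Feynman--Kac formula shows that $v_t(x):=\mathbb{E}_x[f(Y_t)\exp(\int_0^t c(Y_s)\,ds)]$ solves $\partial_t v_t = \mathcal{A}v_t + c\,v_t$ with $v_0=f$. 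Expanding $\mathcal{A}v_t+c\,v_t$ produces exactly the same right-hand side as the equation for $u_t$. Uniqueness for this linear integro-differential problem then forces $u_t=v_t$.

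Concretely, I would implement the comparison by applying It\^o's formula with jumps to the process $s\mapsto u_{t-s}(Y_s)\exp(\int_0^s c(Y_u)\,du)$ on $[0,t]$; the martingale part of this semimartingale, after stopping at a localisation sequence $\tau_n\uparrow\infty$, takes expectation zero and the drift cancels thanks to the forward equation for $u$, which identifies $\mathbb{E}_x[M_0]=u_t(x)$ with $\mathbb{E}_x[M_t]=v_t(x)$. The formula is first obtained for $f\in C_b^2(\bar{\mathbb{R}}_+)$ and then extended to arbitrary nonnegative measurable $f$ by monotone convergence, using non-explosion of $(N_t)$ in finite time (Proposition~\ref{pro_exi_uni}) to pass to the limit on the branching side.

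The main technical obstacle will be the localisation step: since $Y$ may explode and $f$ need not be bounded, one must choose $\tau_n$ so that both the quantity of parasites along the spine and the cumulative number of fragmentation events are controlled, and verify that the additional martingale contributions from the positive-jump measures $\widetilde Q$ and $R$ (which involve the unbounded Lévy measures $\pi$ and $\rho$) vanish in the limit. This is handled by the same truncation procedure used in the proof of Proposition~\ref{pro_exi_uni} and in \cite{marguet2016uniform}, so no genuinely new estimate is needed beyond those already available in the paper.
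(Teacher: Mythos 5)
Your approach is correct in substance but follows a genuinely different route from the paper. The paper never states the exact Feynman--Kac Many-to-One identity you take as your pivot; instead it works directly with the discounted first-moment semigroup $\tilde R_{0,t}f(x)=e^{-\gamma t}\E_{\delta_x}[\sum_{u\in V_t}f(X_t^u)]$, writes its integral (mild) equation, uses $r-q\leq\gamma$ to absorb the zeroth-order term and obtain that $\tilde R_{0,t}f$ is a sub-solution of the integral equation satisfied by $\E_x[f(Y_t)]$, and concludes by comparison. Your route proves more --- the exact identity $\E_{\delta_x}[\sum_u f(X_t^u)]=\E_x[f(Y_t)e^{\int_0^t(r-q)(Y_s)ds}]$ --- and then bounds the weight pathwise; the generator computation underlying both arguments is the same (the branching generator equals $\mathcal{A}+(r-q)\mathrm{Id}$ after using the symmetry of $\kappa$). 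What your approach buys is a cleaner conceptual statement and it sidesteps the comparison principle that the paper leaves implicit (note that $f(\theta x)-f(x)$ has no sign, so the paper's final inequality is not a bare Gronwall step). What it costs is the uniqueness/regularity needed to run It\^o's formula on $s\mapsto u_{t-s}(Y_s)e^{\int_0^s c(Y_u)du}$: the space-time regularity of $u_{t-s}$ is not free here. Since only the inequality is needed, you could make your argument more robust by running the same It\^o computation on the spine side only, i.e.\ establishing ``$\leq$'' directly via Fatou after localisation rather than the full identity; this also meshes better with the extension to unbounded nonnegative measurable $f$ by monotone convergence, exactly as you propose in your last step.
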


\begin{lemma}\label{lem_gene}
Let $\zeta>0$.
\begin{itemize}
  \item[i)]  Assume that Assumption \ref{ass_expl_tps_fini} holds for some $a>1$ such that $\E[\Theta^{1-a}]<\infty$.
Then for any $x>0$,  
  $$ \lim_{t \to \infty} \E_{\delta_x}\left[\sum_{u \in V_t} \left(X_t^u\vee \zeta\right)^{1-a} \right] =0. $$
  \item[ii)]   Assume that Assumption \ref{ass_abs_tps_fini} holds for some $a<1$.
Then for any $x>0$,  
  $$ \lim_{t \to \infty} \E_{\delta_x}\left[\sum_{u \in V_t} \left(X_t^u\wedge \zeta\right)^{1-a} \right] =0. $$
 \end{itemize}
\end{lemma}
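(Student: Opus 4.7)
The strategy combines Lemma \ref{maj_mto}, which transfers the statement to the spinal process $Y$ solving \eqref{SDE_Y_diff_ct}, with the nonnegative local martingale
\begin{equation*}
M_t:=Y_t^{1-a}\exp\Bigl(\int_0^t G_a(Y_s)\,ds\Bigr).
\end{equation*}
The function $G_a$ is designed so that $\mathcal{A}(y^{1-a})=-y^{1-a}G_a(y)$, where $\mathcal{A}$ is the generator of $Y$: a term-by-term It\^o computation matching the drift, the diffusion, the compensated and stable Poisson integrals, and the fragmentation produces this identity (the stable-jump piece uses the self-similar scaling $\rho(y\,du)=y^{-1-\mathfrak{b}}\rho(du)$). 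This is the analogue of \cite[Lemma 7.1]{companion} already invoked in the excerpt.

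First I will apply Lemma \ref{maj_mto}, whose sole hypothesis $r(x)-q(x)\leq \gamma$ is present in both Assumption \ref{ass_expl_tps_fini} and Assumption \ref{ass_abs_tps_fini}, to the bounded functions $f(y)=(y\vee\zeta)^{1-a}$ in case (i) and $f(y)=(y\wedge\zeta)^{1-a}$ in case (ii), obtaining
\begin{equation*}
\E_{\delta_x}\Bigl[\sum_{u\in V_t}f(X_t^u)\Bigr]\leq e^{\gamma t}\,\E_x[f(Y_t)].
\end{equation*}
In both cases $f(Y_t)\leq Y_t^{1-a}$ pathwise on $\{Y_t>0\}$: for $a>1$ because $y\mapsto y^{1-a}$ is decreasing and $Y_t\vee\zeta\geq Y_t$, and for $a<1$ because it is increasing and $Y_t\wedge\zeta\leq Y_t$.

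Next, since $M$ is a nonnegative local martingale with $M_0=x^{1-a}<\infty$, Fatou's lemma applied to any localising sequence shows it is a supermartingale, so $\E_x[M_t]\leq x^{1-a}$. The uniform lower bound $G_a\geq \gamma'$ from the assumption gives the pathwise inequality $Y_t^{1-a}e^{\gamma' t}\leq M_t$, hence
\begin{equation*}
\E_x[Y_t^{1-a}]\leq x^{1-a}e^{-\gamma' t}.
\end{equation*}
Combining this with the first step yields $\E_{\delta_x}[\sum_u f(X_t^u)]\leq x^{1-a}e^{(\gamma-\gamma')t}$, which tends to $0$ since $\gamma<\gamma'$.

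The delicate point is case (i): $Y_t^{1-a}=+\infty$ on $\{Y_t=0\}$, and Assumption \ref{ass_expl_tps_fini} contains no explicit non-absorption hypothesis of type \ref{A1}. The resolution, and arguably the most instructive feature of the argument, is that the supermartingale estimate itself rules out absorption. Localising at $\sigma_n:=\inf\{t\geq 0:Y_t\leq 1/n\}$ and bounding $M_{t\wedge\sigma_n}$ from below by $Y_t^{1-a}e^{\gamma' t}\mathbf{1}_{\{t<\sigma_n\}}+n^{a-1}e^{\gamma'\sigma_n}\mathbf{1}_{\{\sigma_n\leq t\}}$, the inequality $\E_x[M_{t\wedge\sigma_n}]\leq x^{1-a}$ simultaneously yields
\begin{equation*}
\E_x\bigl[Y_t^{1-a}\mathbf{1}_{\{t<\sigma_n\}}\bigr]\leq x^{1-a}e^{-\gamma' t}\qquad\text{and}\qquad \P_x(\sigma_n\leq t)\leq x^{1-a}n^{1-a}.
\end{equation*}
Letting $n\to\infty$ and using $a>1$, the second bound gives $\P_x(Y_t=0)=0$, and monotone convergence in the first delivers the required $\E_x[Y_t^{1-a}]\leq x^{1-a}e^{-\gamma' t}$. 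In case (ii), with $1-a>0$, the function $y\mapsto y^{1-a}$ is finite everywhere on $\R_+$ and no localisation is needed.
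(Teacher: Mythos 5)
Your proof is correct in substance and follows the same backbone as the paper's: reduce to the spinal process $Y$ via Lemma \ref{maj_mto}, then exploit the local martingale $M_t=Y_t^{1-a}e^{\int_0^tG_a(Y_s)ds}$ together with the uniform bound $G_a\geq\gamma'$. Where you genuinely depart from the paper is in the treatment of the boundary behaviour of $Y$. For case {\it i)}, the paper first proves $\P_y(\tau^-(0)<\infty)=0$ as a separate step, by coupling $Y$ with a process having constant division rate $\bar r_K=\sup_{[0,K]}r$ and invoking \cite[Theorem 3.3 i)]{companion}, and then splits $\E_x[(Y_t\vee\zeta)^{1-a}]$ over the events $\{t\leq\tau^-(\eps)\wedge\tau^+(1/\eps)\}$, $\{\tau^-(\eps)<t\}$ and $\{\tau^+(1/\eps)<t\}$, disposing of the last two by dominated convergence. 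You instead extract non-absorption directly from the supermartingale estimate, via $\P_x(\sigma_n\leq t)\leq x^{1-a}n^{1-a}$; this is self-contained (no appeal to the companion paper for this step), quantitative, and yields the clean bound $x^{1-a}e^{(\gamma-\gamma')t}$ (the paper's displayed bound $xe^{(\gamma-\gamma')t}$ appears to carry a typo in the constant). This is a genuine and worthwhile simplification of the argument for point {\it i)}.

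The one place where your write-up is too quick is case {\it ii)}. There the danger is not absorption but explosion: for $a<1$ one has $(+\infty)^{1-a}=+\infty$, so the inequality $\E_x[Y_t^{1-a}]\leq x^{1-a}e^{-\gamma' t}$ already presupposes $\P_x(Y_t=+\infty)=0$, and ``no localisation is needed'' is not accurate, since the local martingale property of $M$ is only available up to the exit times $\tau^+(m)$. The paper handles this by checking that \ref{SNinfinity} holds and citing \cite[Theorem 4.1 i)]{companion} to get $\P_y(\tau^+(\infty)<\infty)=0$. Your own method extends verbatim as the mirror image of your $\sigma_n$ argument: on $\{\tau^+(m)\leq t\}$ one has $M_{t\wedge\tau^+(m)}\geq Y_{\tau^+(m)}^{1-a}e^{\gamma'\tau^+(m)}\geq m^{1-a}$, whence $\P_x(\tau^+(m)\leq t)\leq x^{1-a}m^{a-1}\to0$ as $m\to\infty$, and monotone convergence in $\E_x[Y_t^{1-a}\mathbf{1}_{\{t<\tau^+(m)\}}]\leq x^{1-a}e^{-\gamma' t}$ then delivers the required bound. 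Adding that sentence (and, if you want full rigour, a word on the convention $M\equiv 0$ after explosion in case {\it i)}, where $a>1$ makes $\infty^{1-a}=0$ and Fatou along $\tau^+(m)$ preserves the supermartingale inequality) closes the argument.
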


\begin{proof}[Proof of Lemma \ref{maj_mto}]
We will use a normalisation of the population process similar to the one leading to the auxiliary process and relying on this assumption. Let
$$
R_{0,t}f(x) = \E\left[\sum_{u\in V_t} f(X_t^u)|Z_0 = \delta_x\right]
$$
be the first moment semigroup of $Z$, for $x,t\geq 0$. 
Then we have
\begin{align*}
e^{-\gamma t}& R_{0,t}f(x) = f(x) + \int_0^t \int_{\R_+}\left(\mathcal{G}f(\mathfrak{x})e^{-\gamma r}-\gamma f(\mathfrak{x})e^{-\gamma r}\right)R_{0,r}(x,d\mathfrak{x})dr\\
& +\int_0^t\int_{\R_+}e^{-\gamma r}\left(r(\mathfrak{x})\int_0^1(f(\theta \mathfrak{x})+f((1-\theta)\mathfrak{x})-f(\mathfrak{x}))\kappa(d\theta)-
q(\mathfrak{x})f(\mathfrak{x})\right)R_{0,r}(x,d\mathfrak{x})dr\\
= & f(x) + \int_0^t \int_{\R_+}\left(\mathcal{G}f(\mathfrak{x})e^{-\gamma r}-\gamma f(\mathfrak{x})e^{-\gamma r}\right)R_{0,r}(x_0,d\mathfrak{x})dr\\
& +\int_0^t\int_{\R_+}e^{-\gamma r}\left(2r(\mathfrak{x})\int_0^1(f(\theta \mathfrak{x})-f(\mathfrak{x}))\kappa(d\theta)+(r(\mathfrak{x})-q(\mathfrak{x}))f(\mathfrak{x})
\right)R_{0,r}(x,d\mathfrak{x})dr,
\end{align*}
and using that $r(x)-q(x)\leq \gamma$ for all $x\geq 0$, we obtain
\begin{align*}
\tilde{R}_{0,t}f(x) \leq & f(x) + \int_0^t \int_{\R_+}\mathcal{G}f(\mathfrak{x})\tilde{R}_{0,r}(x,d\mathfrak{x})dr +\int_0^s\int_{\R_+}2r(\mathfrak{x})\int_0^1(f(\theta \mathfrak{x})-f(\mathfrak{x}))\kappa(d\theta)
\tilde{R}_{0,r}(x,d\mathfrak{x})dr,
\end{align*}
where $\tilde{R}_{0,t}f(x) = e^{-\gamma t} R_{0,t}f(x).$
Finally,
\begin{align}\label{eq:ineq_MTO}
\tilde{R}_{0,t}f(x) = \E\left[\sum_{u\in V_t} f(X_t^u)|Z_0 = \delta_x\right]e^{-\gamma t}\leq \E_{x}\left[f\left(Y_t\right)\right],
\end{align}
where $Y$ is the unique strong solution to the SDE
\eqref{SDE_Y_diff_ct}.
\end{proof}

\begin{proof}[Proof of Lemma \ref{lem_gene}]
To begin with, let us prove using a coupling argument that under the assumptions of point~$i)$, for all $y>0$, $\P_y\left(\tau^-(0)<\infty\right)=0$, 
where $\tau^-(0)=\inf\left\{t\geq 0, Y_t=0\right\}$. Let $K> 0$. We consider the process $\tilde{Y}$ defined as the unique strong solution to
\begin{align*}
\tilde{Y}_t= x& + \int_0^tg(\tilde{Y}_s)ds+ \int_0^t \sqrt{2 \sigma^2(\tilde{Y}_s) }dB_s+ \int_0^t \int_0^{p(\tilde{Y}_{s^-})}
\int_{\mathbb{R}_+}z\widetilde{Q}(ds,dx,dz)\\
&+\int_0^t\int_0^{\tilde{Y}_{s^-}}\int_{\mathbb{R}_+}zR(ds,dx,dz)+\int_0^t\int_0^{2\overline{r}_K} \int_0^1  (\theta-1)\tilde{Y}_{s^-}N(ds,dx,d\theta),
\end{align*}
where $B,\widetilde{Q}$ and $N$ are the same as in \eqref{SDE_Y_diff_ct} and $\overline{r}_K=\sup_{x\in[0,K]}r(x)$. Then, as $p$ is a non-decreasing function, 
\begin{align}\label{eq:coupling}
\tilde{Y}_t\leq Y_t\quad \text{ for all }\quad t\leq \tau^+(K):=\inf\left\{t\geq 0, Y_t\geq K\right\}.
\end{align}

Let $a>1$ be as in Assumption \ref{ass_expl_tps_fini}. We thus have
$$ \tilde{G}_a(u)= G_a(u)- 2(\bar{r}_K-r(u))\E[\Theta^{1-a}-1] \geq \gamma' + o(\ln u), \quad (u \to 0). $$
Moreover, we can check that in the presence of stable positive jumps, the proof of \cite[Theorem 3.3{\it i)}]{companion} is not modified.
We thus obtain that for all $y>0$,
$
\P_y\left(\tilde{\tau}^-(0)<\infty\right)=0,
$
where $\tilde{\tau}^-(0)=\inf\left\{t\geq 0, \tilde{Y}_t=0\right\}$. Then, from \eqref{eq:coupling} we get $\P_y\left(\tau^-(0)<\tau^+(K)\right)=0$ and letting 
$K$ tend to infinity yields 
\begin{equation}\label{non_ext_gene}
\P_y\left(\tau^-(0)<\infty\right)=0.
\end{equation}
Now,  let $\zeta>0$ and $\eps>0$.
Then we have from \eqref{eq:ineq_MTO}, for every $t\geq 0$, $x>0$,
 \begin{align*}
  \E_{\delta_x} \left[ \sum_{u \in V_t} \left(X_t^u \vee \zeta \right)^{1-a} \right] & \leq e^{\gamma t}  \E_x \left[ \left(Y_t \vee \zeta \right)^{1-a} \right]\\
   & \leq e^{\gamma t}  \E_x \left[\mathbf{1}_{\{t \leq \tau^-(\eps) \wedge \tau^+(1/\eps)\}}Y_t^{1-a} \right]
   + e^{\gamma t}  \E_x \left[\mathbf{1}_{\{t > \tau^-(\eps) \wedge \tau^+(1/\eps)\}}\left(Y_t \vee \zeta \right)^{1-a} \right].
 \end{align*}
The first term can be bounded as follows:
 \begin{align*}
  e^{\gamma t}  \E_x \left[\mathbf{1}_{\{t \leq \tau^-(\eps) \wedge \tau^+(1/\eps)\}}Y_t^{1-a} \right] 
  & = e^{(\gamma-\gamma') t}  \E_x \left[Y_{t \wedge \tau^-(\eps) \wedge \tau^+(1/\eps)}^{1-a} e^{\gamma'(t \wedge \tau^-(\eps) \wedge \tau^+(1/\eps))}
  \mathbf{1}_{\{t \leq \tau^-(\eps) \wedge \tau^+(1/\eps)\}}\right]\\
  & \leq e^{(\gamma-\gamma') t}  \E_x \left[ Y_{t \wedge \tau^-(\eps) \wedge \tau^+(1/\eps)}^{1-a} e^{\int_0^{t \wedge \tau^-(\eps) \wedge \tau^+(1/\eps)}
  G_{a}(Y_s)ds} \right]\\
  & = x e^{(\gamma-\gamma') t},
 \end{align*}
using that $G_a(x)\geq \gamma'$ for all $x\geq 0$ and the martingale property. The second term may be divided into two parts as follows:
 \begin{align*}
  \E_x \left[\mathbf{1}_{\{t > \tau^-(\eps) \wedge \tau^+(1/\eps)\}}\left(Y_t \vee \zeta \right)^{1-a} \right] 
  & =  \E_x \left[\mathbf{1}_{\{\tau^-(\eps) <t\}}\left(Y_t \vee \zeta \right)^{1-a} \right] 
  +  \E_x \left[\mathbf{1}_{\{ \tau^+(1/\eps) < t \leq \tau^-(\eps) \}}\left(Y_t \vee \zeta \right)^{1-a} \right] .
\\ & \leq  \E_x \left[\mathbf{1}_{\{\tau^-(\eps) <t\}}\left(Y_t \vee \zeta \right)^{1-a} \right] 
  +  \E_x \left[\mathbf{1}_{\{ \tau^+(1/\eps) < t  \}}\left(Y_t \vee \zeta \right)^{1-a} \right] .
 \end{align*}
For any fixed $t\geq 0$, as $a>1$,
$$ \mathbf{1}_{\{\tau^-(\eps) <t\}}\left(Y_t \vee \zeta \right)^{1-a}\leq \zeta ^{1-a} $$
where $\zeta^{1-a}$ is finite and does not depend on $\eps$, and we know thanks to \eqref{non_ext_gene} that 
$$  \lim_{\eps \to 0}\mathbf{1}_{\{\tau^-(\eps) <t\}} = 0 \quad \text{almost surely.} $$
Hence by the dominated convergence theorem, we obtain that
$$\lim_{\eps \to 0}\E_x \left[\mathbf{1}_{\{\tau^-(\eps) <t\}}\left(Y_t \vee \zeta \right)^{1-a} \right]  = 0 \quad \text{almost surely.} $$
Let us finally consider the last term. 
First, notice that for every $\eps>0$
\begin{equation}\label{maj_CVD}
\mathbf{1}_{\{ \tau^+(1/\eps) < t \}}\left(Y_t \vee \zeta \right)^{1-a}\leq \zeta^{1-a}
\end{equation}
where $\zeta^{1-a}$ is finite and does not depend on $\eps$.
Now, let us consider the sequence of stopping times $(\tau^+(1/\eps), \eps>0)$. This sequence increases when $\eps$ decreases, and there exists 
$\tau^+(\infty)$, which may be infinite, defined by
\begin{equation} \label{deftauinfty} \lim_{\eps \to 0} \tau^+(1/\eps)=: \tau^+(\infty). \end{equation}
There are two cases:
\begin{itemize}
 \item Either $\tau^+(\infty)\leq t$. In this case, $Y_t=\infty$ and 
 $$  \mathbf{1}_{\{ \tau^+(1/\eps) < t  \}}\left(Y_t \vee \zeta \right)^{1-a} = 0.$$
 \item Or $\tau^+(\infty)> t$. In this case, there exists $\eps_0>0$ such that for any $\eps \leq \eps_0$, $\tau^+(1/\eps) \geq  t$, and for 
 such an $\eps$, 
  $$  \mathbf{1}_{\{ \tau^+(1/\eps) < t  \}}\left(Y_t \vee \zeta \right)^{1-a} = 0.$$
\end{itemize}
We deduce that for any fixed $t\geq 0$
$$ \lim_{\eps \to 0} \mathbf{1}_{\{ \tau^+(1/\eps) < t  \}}\left(Y_t \vee \zeta \right)^{1-a} = 0 \quad \text{almost surely.} $$
From \eqref{maj_CVD} we may apply the dominated convergence theorem and obtain
$$ \lim_{\eps \to 0} \E_x \left[\mathbf{1}_{\{ \tau^+(1/\eps) < t  \}}\left(Y_t \vee \zeta \right)^{1-a} \right] = 0. $$

To sum up, we proved that for any $t \geq 0$, $x>0$, and $\eps>0$, 
 \begin{align*}
  \E_{\delta_x} \left[ \sum_{u \in V_t} \left(X_t^u \vee \zeta \right)^{1-a} \right] 
   & \leq x e^{(\gamma-\gamma') t} + e^{\gamma t}  \E_x \left[\mathbf{1}_{\{t > \tau^-(\eps) \wedge \tau^+(1/\eps)\}}\left(Y_t \vee \zeta \right)^{1-a} \right] 
   \xrightarrow[\eps \to 0]{} x e^{(\gamma-\gamma') t} .
 \end{align*}
Letting $t$ tend to infinity ends the proof of point {\it i)}, as $\gamma<\gamma'$.
 
Let us now turn to the proof of point $ii)$. It is similar in spirit to the proof of point $i)$. 
Let $a<1$ and $\gamma'>0$ be such that Assumption \ref{ass_abs_tps_fini} holds. 
First of all, as $G_a(x)\geq \gamma'$ for all $x\geq 0$, \ref{SNinfinity} holds for $Y$ (defined as before as the unique strong solution to \eqref{SDE_Y_diff_ct}) 
and thus according to \cite[Theorem 4.1{\it i)}]{companion}, for all $y>0$, 
\begin{equation} \label{non_expl_caseii} \P_y\left(\tau^+(\infty)<\infty\right)=0,\end{equation}
where $\tau^+(\infty)$ has been defined in \eqref{deftauinfty}.
Let $\eps>0$.
Similar computations as for point $i)$ lead to
 \begin{align*}
  \E_{\delta_x}\left[ \sum_{u \in V_t} \left(X_t^u \wedge \zeta \right)^{1-a} \right] 
   & \leq x e^{(\gamma-\gamma') t} + e^{\gamma t}  
   \left( \E_x \left[\mathbf{1}_{\{\tau^-(\eps) <t\}}\left(Y_t \wedge \zeta \right)^{1-a} \right] 
  +  \P_x ( \tau^+(1/\eps) < t )\zeta^{1-a} \right).
 \end{align*}
 
From \eqref{non_expl_caseii}, the last term converges to $0$ when $\eps$ goes to $0$.
Moreover, distinguishing between the cases 
$ \{\tau^-(0)\leq t\}$ and $\{\tau^-(0)> t\} $
and applying the dominated convergence theorem, we obtain that the second term also converges to $0$ when $\eps$ goes to $0$.
This concludes the proof of point {\it ii)}.
\end{proof}

\begin{proof}[Proof of Proposition \ref{prop_constant_diff}]
Let $K>0$.
 Let us first prove that under Assumption \ref{ass_expl_tps_fini} (resp. \ref{ass_abs_tps_fini}),
 \begin{equation}\label{eq:proba=0}  
 \lim_{t \to \infty} \P_{\delta_x}\left( \exists u \in V_t, X_t^u \leq K \right) =0\quad \left(\text{resp. } \lim_{t \to \infty} \P_{\delta_x}\left( \exists u \in V_t, X_t^u \geq 1/K \right) =0 \right).
\end{equation}
Let $t\geq 0$. The first limit is due to the following sequence of inequalities, as $a>1$ under the assumptions of point $i)$:
\begin{align*}
 \P_{\delta_x}\left( \exists u \in V_t, X_t^u \leq K \right)& = \P_{\delta_x}\left( \exists u \in V_t, \left(X_t^u \vee K\right)^{1-a} = K^{1-a} \right)\\
 & \leq \P_{\delta_x}\left( \sum_{u \in V_t} \left(X_t^u \vee K\right)^{1-a} \geq K^{1-a} \right) \leq \E_{\delta_x}\left[ \sum_{u \in V_t} \left(X_t^u \vee K\right)^{1-a}\right]K^{a-1},
\end{align*}
where we used Markov's inequality.
The other case is similar. Then, \eqref{eq:proba=0} follows from Lemma \ref{lem_gene}.
 \end{proof}
\appendix

\section{Proof of Proposition \ref{pro_exi_uni}} \label{proof_ex_uni}

To prove that the SDE \eqref{X_sans_sauts} admits a unique nonnegative strong solution with generator given by $\mathcal{G}$ defined in \eqref{def_gene}, we apply \cite[Proposition 1]{palau2018branching}. The proof is the same as the proof of \cite[Proposition 2.1]{companion}, except that we have to take into account the extra stable term. To prove that we still have a unique nonnegative strong solution with the addition of this term, it is enough to check that for any $n \in \N$, there exists a finite constant $A_n$ such that for any $0 \leq x,y \leq n$,
$$ \int_{\R_+} \left| \mathbf{1}_{\{ x \leq u \}}z \wedge n - \mathbf{1}_{\{ y \leq u \}}z \wedge n \right|\frac{dz}{z^{2+\beta}}du \leq A_n |x-y|,$$
where we recall that $\beta \in (-1,0)$. This is a consequence of the following series of equalities:
\begin{align*}
 \int_{\R_+} \left| \mathbf{1}_{\{ x \leq u \}}z \wedge n - \mathbf{1}_{\{ y \leq u \}}z \wedge n \right|\frac{dz}{z^{2+\beta}}du &= \int_{\R_+} \left| x  - y \right| z \wedge n\frac{dz}{z^{2+\beta}}du\\
 &= |x-y| \left( \int_0^n \frac{dz}{z^{1+\beta}} +n \int_n^\infty \frac{dz}{z^{2+\beta}} \right).
\end{align*}
To prove that it gives the existence and uniqueness of the process at the cell population level, we apply \cite[Theorem 2.1]{marguet2016uniform}.

\section{Proof of Proposition \ref{prop_sol_SDE_auxi}}\label{app:prop_sol_SDE_auxi}

We provide the proof of the proposition in the case $g \neq \beta$. The proof is very similar in the case $g = \beta$ and does not bring new insight. We thus skip it.
The proof is a direct application of \cite[Proposition 1]{palau2018branching}.
Notice that in the statement of \cite[Proposition 1]{palau2018branching}, the functions $b$, $g$ and $h$ do not depend on time, 
unlike the present case of our process. However this additional dependence does not bring any modification to the proofs (which are mostly  derived in the earlier 
paper \cite{li2012strong}).
First according to their conditions (i) to (iv) on page 60, our parameters are admissible.
Second, we need to check that conditions (a), (b) and (c) are fulfilled.

In our case, condition (a) writes as follows: for any $n \in \N$, there exists $A_n<\infty$ such that for any $0 \leq x \leq n$,
$$  \int_0^1 \int_0^\infty \left| (\theta-1)x \mathbf{1}_{\{z \leq f_2(x,s,\theta)\}} \right| dz \kappa(d\theta)\leq A_n(1+x). $$
We have for any $0\leq x \leq n$
\begin{align*}
\int_0^1 \int_0^\infty \left| (\theta-1)x \mathbf{1}_{\{z \leq f_2(x,s,\theta)\}} \right| dz \kappa(d\theta) \leq & 
\int_0^1 2(1-\theta)x(\alpha x+\beta)\kappa(d\theta) \leq n(\alpha n+\beta),
\end{align*}
and thus (a) holds. Now, to satisfy condition (b), it is enough to check that for any $n \in \N$, 
there exists $B_n(t)<\infty$ such that for $0\leq s\leq t$ and
$0 \leq x\leq y\leq n$,
$$ |xf_1(x,s)-yf_1(y,s)|+ \int_0^\infty \int_0^1 (1-\theta) \left| x \mathbf{1}_{\{u \leq f_2(x,s)\}} - y \mathbf{1}_{\{u \leq f_2(y,s)\}} \right| \kappa(d\theta) du
\leq B_n(t)|y-x|. $$
First, we have
\begin{align*}
xf_1(x,s) = gx + F(x)\frac{\alpha \left(e^{(g-\beta)s}-1\right)}{g-\beta+\alpha x\left(e^{(g-\beta)s}-1\right)},
\end{align*}
where $F(x):= 2\sigma^2(x)+\mathbb{E}\left[\mathcal Z ^2\right]p(x).$
For $0\leq x,y\leq n$,
\begin{align*}
&  \left| \frac{F(x)}{g-\beta+\alpha x\left(e^{(g-\beta)s}-1\right)}-
  \frac{F(y)}{g-\beta+\alpha y\left(e^{(g-\beta)s}-1\right)} \right| \\ 
& = \left| \frac{ (F(x)-F(y))(g-\beta)+ 
  \alpha \left(e^{(g-\beta)s}-1\right) (F(x)y-F(y)x)}{\left(g-\beta+\alpha y\left(e^{(g-\beta)s}-1\right)\right)\left(g-\beta+\alpha x\left(e^{(g-\beta)s}-1\right)\right)} \right|\\
&\leq 2\frac{ \left|\sigma(x)-\sigma(y)\right|^2 }{|g-\beta|}+\mathbb{E}\left[\mathcal Z ^2\right]\frac{ \left|p(x)-p(y)\right| }{|g-\beta|}+
  \left|\frac{\alpha\left(e^{(g-\beta)s}-1\right)}{(g-\beta)^2}\right|\left|F(x)y-F(y)x\right|\\
&  \leq 2\frac{ \left|\sigma(x)-\sigma(y)\right|^2 }{|g-\beta|}+\mathbb{E}\left[\mathcal Z ^2\right]\frac{ \left|p(x)-p(y)\right| }{|g-\beta|}+
   \frac{\alpha\left(e^{(g-\beta)t}+1\right)}{(g-\beta)^2}(F(x)|y-x|+\left|F(x)-F(y)\right|x).
\end{align*}
Therefore, using Assumption \ref{ass_A}, there exists $B_{n,1}(t)>0$ such that
\begin{align*}
\left|F(x)\frac{\alpha\left(e^{(g-\beta)s}-1\right)}{g-\beta+\alpha x\left(e^{(g-\beta)s}-1\right)}-
  F(y)\frac{\alpha\left(e^{(g-\beta)s}-1\right)}{g-\beta+\alpha y\left(e^{(g-\beta)s}-1\right)} \right|\leq  B_{n,1}(t)|x-y|.
\end{align*}
To prove that (b) holds, it remains to prove that for any $n \in \N$, there exists $B_{n,2}(t)<\infty$ such that for all
$0 \leq x\leq y\leq n$, and $0\leq s \leq t$,
\begin{align*}
\int_0^1\int_0^\infty  (1-\theta) \left| x \mathbf{1}_{\{u \leq f_2(x,s,\theta)\}} - y \mathbf{1}_{\{u \leq f_2(y,s,\theta)\}} \right| du \kappa(d\theta) 
\leq \frac{1}{2} B_{n,2}(t)|y-x|.
\end{align*}
But for any $|x|,|y|\leq n$, 
$$ \left|x\mathbf{1}_{u \leq f_2(x,s,\theta)}-y\mathbf{1}_{u \leq f_2(y,s,\theta)} \right| \leq
 n \left|\mathbf{1}_{u \leq f_2(x,s,\theta)}-\mathbf{1}_{u \leq f_2(y,s,\theta)} \right|+ |x-y|\mathbf{1}_{u \leq f_2(y,s, \theta)}.$$
Hence 
 \begin{align*} \int_{0}^\infty \left|x\mathbf{1}_{u \leq f_2(x,s,\theta)}-y\mathbf{1}_{u \leq f_2(y,s,\theta)} \right|  du \leq n |f_2(x,s,\theta)-f_2(y,s,\theta)|+ |x-y| f_2(y,s,\theta).
\end{align*}
Next,
\begin{align*}
 & |f_2(x,s,\theta)-f_2(y,s,\theta)|\\
 & =  2\left| (\alpha x+\beta)\frac{g-\beta+\alpha \theta x \left(e^{(g-\beta)s}-1\right)}{g-\beta+\alpha x\left(e^{(g-\beta)s}-1\right)}- (\alpha y+\beta)\frac{g-\beta+\alpha \theta y \left(e^{(g-\beta)s}-1\right)}{g-\beta+\alpha y\left(e^{(g-\beta)s}-1\right)} \right|\\
& \leq 2\alpha |x-y| + 2\alpha\beta\left(e^{|g-\beta|t}-1\right)|x-y|,
\end{align*} 
and
\begin{align*}
 \int_{0}^\infty \left|x\mathbf{1}_{u \leq f_2(x,s,\theta)}-y\mathbf{1}_{u \leq f_2(y,s,\theta)} \right| \leq B_{n,2}(t) |x-y|, 
\end{align*}
where $B_{n,2}(t) =2n\alpha + 2n\alpha\beta\left(e^{|g-\beta|t}-1\right)+2(\alpha n +\beta)$  and condition (b) holds with $B_n(t) = g + B_{n,1}(t)+B_{n,2}(t)/2$. 

It remains to check that (c) is satisfied {\it i.e.} for all $0\leq s\leq t$, $z\geq 0$ and $u\geq 0$ that $x\mapsto x+h(x,s,z,u)$ is nondecreasing, where $h(x,s,z,u) = z\mathbf{1}_{\left\{u\leq f_3(x,s,z)\right\}}$ and that there exists $C_n(t)$ such that for all $0\leq x,y,\leq n$ and $0\leq s \leq t$,
\begin{align*}
|\sigma(x)-\sigma(y)|^2+ \int_0^\infty \int_0^\infty \left(|l(x,y,s,z,u)|\wedge (l(x,y,s,z,u))^2\right) du\pi(dz)\leq C_n(t)|x-y|,
\end{align*}
where $l(x,y,s,z,u) = h(x,s,z,u)-h(y,s,z,u)$. First, notice that for all $0\leq s\leq t$ and $z\geq 0$, $x\mapsto f_3(x,s,z)$ is nondecreasing thanks to Assumption \ref{Ass_E'} so that $x\mapsto x+h(x,s,z,u)$ is nondecreasing. Next, 
\begin{align*}
  \int_0^\infty \left(|l(x,y,s,z,u)|\wedge (l(x,y,s,z,u))^2\right) du
& = \int_0^\infty \left(z\wedge z^2\right)\left|\mathbf{1}_{\left\{u\leq f_3(x,s,z)\right\}}-\mathbf{1}_{\left\{u\leq f_3(y,s,z)\right\}}\right|du\\
& = \left(z\wedge z^2\right)\left| f_3(x,s,z)- f_3(y,s,z)\right|.
\end{align*}
Moreover,
\begin{align*}
& \left| f_3(x,s,z)- f_3(y,s,z)\right|\\
&  \leq |p(x)-p(y)| + \alpha z\left| e^{(g-\beta)s}-1\right|\left|\frac{p(x)}{g-\beta+\alpha x\left(e^{(g-\beta)s}-1\right)} - 
\frac{p(y)}{g-\beta+\alpha y\left(e^{(g-\beta)s}-1\right)}\right|\\
& \leq |p(x)-p(y)| + \alpha z \frac{\left(e^{|g-\beta|t}+1\right)}{|g-\beta|}\left(\left|p(x)-p(y)\right| + 
\alpha  \frac{\left(e^{|g-\beta|t}+1\right)}{|g-\beta|} |yp(x)-xp(y)| \right).
\end{align*}
We get the desired inequality using that $p$ is locally Lipschitz, 
$\int_0^\infty z^2\pi(dz)<\infty$ and that $\sigma$ is $1/2$-H\"olderian.

\section{Lemma \ref{lemtechnq}} \label{App_lemtechnq}

This appendix is dedicated to the statement and proof of a lemma, which is a slightly weaker version of Lemma 4.3 in \cite{BT11}. The only difference
is that they considered a Yule process instead of a birth and death process, and that the finite sets $I$ and $J$ could be arbitrary, whereas we 
impose the condition $J \subset I$.
The statement and proof are deliberately very close to that of Lemma 4.3 in \cite{BT11}. We give the proof in integrality for the sake of readability.

\begin{lemma} \label{lemtechnq}Let $V$ be a denumerable subset and
 $(N_t(i) : t\geq 0)$  be i.i.d. birth and death processes with birth and death rates $r$ and $q< r$ for $i\in V$. Then there
exist $\delta>0$ and a nonnegative nonincreasing function  $G$  on $\R_+$
such that $G(y)\rightarrow 0$ as $y\rightarrow \infty$ and  for all $J\subset I$ finite subsets of $V$ and $x\geq 0$:
\begin{equation*}
\mathfrak{P}(x,\#J,\#I):=\P\Big(\sup_{t\geq 0} \mathbf{1}_{\{\sum_{i\in I} N_t(i)>0\}} \frac{\sum_{i\in J} N_t(i)}{\sum_{i\in I} N_t(i)} \geq x\Big)\leq G\left(\frac{\#I}{\#J}  x\right)+e^{-2\delta\#I}.
\end{equation*}
\end{lemma}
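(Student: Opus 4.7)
The plan is to identify a hidden supermartingale that arises precisely because $J \subset I$, and then to invoke Doob's maximal inequality. Write $N^I_t := \sum_{i \in I} N_t(i)$ and $N^J_t := \sum_{i \in J} N_t(i)$; because the individuals of $J$ and of $I \setminus J$ evolve as independent birth-death processes with common rates $r$ and $q$, the pair $(N^J_t, N^I_t)$ is a Markov process on $\{(j,i) : 0 \leq j \leq i\}$ with four transition types. The observation driving the proof is that the bounded process
\begin{equation*}
\mathfrak{F}_t := \frac{N^J_t}{N^I_t}\, \mathbf{1}_{\{N^I_t > 0\}}
\end{equation*}
is a nonnegative c\`adl\`ag supermartingale with $\mathfrak{F}_0 = \#J/\#I$.

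To verify this, I would compute the infinitesimal generator of $(N^J, N^I)$ acting on $f(j, i) = j/i$, with the convention $f(0,0) := 0$. The two birth transitions, of rates $rj$ and $r(i-j)$, produce contributions $\pm\, rj(i-j)/(i(i+1))$ which cancel, and similarly the two death transitions cancel as long as $i \geq 2$. The only state where cancellation fails is $(1, 1)$: there the single death of the lone J-individual simultaneously kills the whole population and produces the jump $f(0,0) - f(1,1) = -1$, contributing $\mathcal{L}f(1, 1) = -q \leq 0$. Hence $\mathcal{L}f \leq 0$ on the state space; combined with boundedness of $f$, this gives the supermartingale property.

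Doob's maximal inequality for nonnegative supermartingales then yields
\begin{equation*}
\mathfrak{P}(x, \#J, \#I) = \P\Big(\sup_{t \geq 0} \mathfrak{F}_t \geq x\Big) \leq \frac{\#J}{\#I\, x} = G\Big(\frac{\#I}{\#J}\, x\Big),
\end{equation*}
where $G(y) := 1 \wedge y^{-1}$ is nonnegative, nonincreasing on $\mathbb{R}_+$, and tends to $0$ at infinity. Since $e^{-2\delta \#I} \geq 0$ for any $\delta > 0$, the stated inequality follows. The main (and essentially only) substantive step is the generator calculation; the one genuine point of care is the boundary state $(1,1)$, where the contribution from the extinction jump is what makes $\mathfrak{F}_t$ a supermartingale rather than a genuine martingale as in the pure-birth case of \cite{BT11}, but nonpositivity of the generator is all that is actually needed. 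Under $J \subset I$ the extra term $e^{-2\delta \#I}$ in the bound is therefore an harmless slack.
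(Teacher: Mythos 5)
Your proof is correct, and it takes a genuinely different --- and in fact sharper --- route than the paper's. The paper follows the scheme of Lemma 4.3 in \cite{BT11}: each $N_t(i)e^{-(r-q)t}$ is a nonnegative martingale, one introduces the i.i.d.\ variables $M(i)=\sup_{t}N_t(i)e^{-(r-q)t}$ and $m(i)=\inf_{t}N_t(i)e^{-(r-q)t}$, bounds the ratio by $\frac{\sum_{J}M(i)}{\#J}\frac{\#I}{\sum_{I}m(i)}\cdot\frac{\#J}{\#I}$ on the event that at least $\eps\#I$ lines survive, gets a (non-explicit) function $G$ from uniform tightness via the law of large numbers, and controls the complementary event by Hoeffding's inequality applied to the $\#I$ independent Bernoulli survival indicators --- which is where the $e^{-2\delta\#I}$ term comes from. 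You instead exploit the hypothesis $J\subset I$ (precisely the restriction the paper adds relative to \cite{BT11}, where $I$ and $J$ were arbitrary) to observe that the ratio itself is a bounded nonnegative supermartingale, and Doob's maximal inequality gives the explicit bound $1\wedge\big(\#J/(\#I\,x)\big)$ with no exceptional term at all; your $G$ is explicit and your conclusion is strictly stronger than the stated one. Your generator computation is right: the two birth terms contribute $\pm rj(i-j)/(i(i+1))$, the two death terms contribute $\mp qj(i-j)/(i(i-1))$ for $i\geq 2$, and the only non-vanishing contribution is $\mathcal{L}f(1,1)=-q\leq 0$. The one point you should make explicit is that the jump rates are unbounded (linear in the population size), so boundedness of $f$ does not by itself justify applying Dynkin's formula globally: localize at $T_n=\inf\{t\geq 0:\ N^I_t\geq n\}$, note that the linear birth--death chain is non-explosive so $T_n\to\infty$, obtain the supermartingale property for the stopped processes, and pass to the limit by bounded convergence. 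With that routine addition the argument is complete.
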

\begin{proof}
From classical results on birth and death processes (see \cite{athreya1972branching}
for instance), we know that for $i \in V$ the process $(N_t(i)e^{-(r-q)t})$ is a non 
negative martingale which converges to a random variable $W$ which is positive on the survival 
event (occurring with probability $(r-q)/r$).
Let us introduce the random variables,
$$M(i):=\sup_{t\geq 0} N_t(i)e^{-(r-q)t}\quad \mbox{ and } \quad m(i)=\inf_{t\geq 0} N_t(i)e^{-(r-q)t}.$$
$(M(i) : i \in V)$ and $(m(i): i \in V)$ are both sequences of finite nonnegative i.i.d.
random variables with finite expectation. 
Moreover, if we introduce, for $i \in V$, the events:
$$ V_\infty(i):= \{ N_t(i) \geq 1, \forall t\geq 0 \} \quad \text{and} \quad 
M_\infty(i):= \{ \exists t<\infty, N_t(i) = 0 \}, $$
and the set
$$ V_\infty:=\{ i \in I, V_\infty(i) \text{ holds}\}, $$
we have that
$ 0=m(i)\leq M(i) $ on the event $M_\infty(i)$, and $0<m(i)\leq M(i)$ on the event $V_\infty(i)$.
As a consequence, for any $\eps \in (0,1)$ and $t \geq 0$, using also that $J\subset I$, we have 
\begin{align*} \mathbf{1}_{\{\sum_{i\in I} N_t(i)>0\}} \frac{\sum_{i\in J} N_t(i)}{\sum_{i\in I} N_t(i)}&\leq  \mathbf{1}_{\{\#V_\infty>\eps \#I\}} \frac{\sum_{i\in J} M(i)}{\sum_{i\in I} m(i)}+\mathbf{1}_{\{\#V_\infty\leq \eps \#I\}} \\
&=  \frac{\#J}{\#I}\left(\mathbf{1}_{\{\#V_\infty>\eps \#I\}}\frac{\sum_{i\in J} M(i)}{\# J} \frac{\# I}{\sum_{i\in I} m(i)}+\mathbf{1}_{\{\#V_\infty\leq \eps \#I\}}\frac{\#I}{\#J}\right).
\end{align*}
Hence, we can bound $\mathfrak{P}$ as follows:
\begin{align}\nonumber\label{eq:app1}
\mathfrak{P}(x,\#J,\#I) \leq  & \P \left( \mathbf{1}_{\{\#V_\infty>\eps \#I\}}\frac{\sum_{i\in J} M(i)}{\# J} \frac{\# I}{\sum_{i\in I} m(i)} \geq \frac{\#I}{\#J}\frac{x}{2} \right)\\
&+ \P \left(\mathbf{1}_{\{\#V_\infty\leq \eps \#I\}}\frac{\#I}{\#J}\geq \frac{\#I}{\#J}\frac{x}{2}\right).
\end{align}

To handle the first term on the right-hand side of \eqref{eq:app1}, we define for $y\geq 0$
$$G(y)=\sup \Big\{  \P\Big(\mathbf{1}_{\{\#V_\infty>\eps \#I\}}\frac{\sum_{i\in J} M(i)}{\# J} \frac{\# I}{\sum_{i\in I} m(i)}\geq y\Big)  : J, I\subset V; \#I < \infty\Big\}.$$
By the law of large numbers, the sequence
$$\mathbf{1}_{\{\#V_\infty>\eps \#I\}}\frac{\sum_{i\in J} M(i)}{\# J} \frac{\# I}{\sum_{i\in I} m(i)}$$
is uniformly tight. So $G(y)\rightarrow  0$ as $y\rightarrow \infty$.

For the second term on the right-hand side of \eqref{eq:app1}, Markov's inequality yields
\begin{align*}
\P \left(\mathbf{1}_{\{\#V_\infty\leq \eps \#I\}}\frac{\#I}{\#J}\geq \frac{\#I}{\#J}\frac{x}{2}\right)\leq \P \left(\#V_\infty\leq \eps\#I\right) \frac{2}{x}.
\end{align*}
To bound the last term, we recall that $\# V_\infty$ is a sum of $\#I$ independent Bernoulli random variables with parameter $1-q/r$.
Hence, for $\eps \leq 1-q/r$, using Hoeffding's inequality, we obtain
\begin{align*}
\P \left(\#V_\infty\leq \eps\#I\right)\leq \exp\left(-2\#I\left(1-\frac{q}{r}-\eps\right)^2\right)
\end{align*}
and it concludes the proof.
\end{proof}

\section{Generalization to a division rate depending on the fragmentation parameter $\theta$}\label{app:generalization SDE}

In some proofs, we need to consider a slight generalization 
of the SDE \eqref{eq:EDS} where an individual with trait $x$ dies 
and transmits a proportion $\theta \in [0,1]$ of its trait to its left offspring
at a rate $r(x)\rtheta(\theta)$, that depends on $\theta$,
where $\rtheta:[0,1]\rightarrow \mathbb{R}_+$ is a nonnegative function. 
However, using the properties of Poisson random measures we can prove that 
a solution to such an SDE can be rewritten 
as the solution to \eqref{eq:EDS} by modifying the death 
rate $r$ and the fragmentation kernel $\kappa$.
\begin{lemma}\label{chgt-eds}
Assume that $\int_0^1 \rtheta(\theta)\kappa(d\theta)<\infty$. Let 
$$\hat{\kappa}(d\theta) = \rtheta(\theta)\left(\int_0^1\rtheta(\theta)\kappa(d\theta)\right)^{-1}\kappa(d\theta),\quad \hat{r}(x) = r(x)\int_0^1\rtheta(\theta)\kappa(d\theta),$$
and $\widetilde{Q}$, $B$ and $N$ be defined as in \eqref{eq:EDS}.
Then, there exists a Poisson random measure $N'$ with intensity $ds\otimes dz\otimes  \hat{\kappa}(d\theta)$ such that $X$ is the pathwise unique solution to 
\begin{align*}
X_t= X_0+ \int_0^t g(X_s)ds+ \int_0^t\sqrt{2 \sigma^2(X_s) }dB_s&+ \int_0^t\int_0^{p(X_{s^-})}\int_{\mathbb{R}_+}z\widetilde{Q}(ds,dx,dz)\\
&+ \int_0^t  \int_0^{\hat{r}(X_{s^-})} \int_0^1 (\theta-1)X_{s^-}N'(ds,dz,d\theta), \nonumber
\end{align*} 
if and only if $X$ is the pathwise unique nonnegative strong solution to
\begin{align*}
X_t= X_0+ \int_0^t g(X_s) ds+ \int_0^t\sqrt{2 \sigma^2(X_s) }dB_s&+ \int_0^t\int_0^{p(X_{s^-})}\int_{\mathbb{R}_+}z\widetilde{Q}(ds,dx,dz)\\
&+ \int_0^t \int_0^1\int_0^{r(X_{s^-})\rtheta(\theta)}(\theta-1)X_{s^-}N(ds,dz,d\theta). \nonumber
\end{align*}
\end{lemma}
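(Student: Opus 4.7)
The plan is to reduce the equivalence to the matching of the jump compensators of the fragmentation terms in the two SDEs. Observe first that the assumption $\int_0^1\rtheta(\theta)\kappa(d\theta)<\infty$ guarantees that $\hat\kappa$ is a well-defined probability measure on $[0,1]$ and that $\hat r$ is a finite rate function. The key computation is that, for any nonnegative measurable $F:[0,1]\to\R_+$ and any $x\geq 0$,
\begin{equation*}
\int_0^1\!\int_0^\infty F(\theta)\mathbf{1}_{\{u\le r(x)\rtheta(\theta)\}}\,du\,\kappa(d\theta)
= r(x)\int_0^1 F(\theta)\rtheta(\theta)\kappa(d\theta)
= \hat r(x)\int_0^1 F(\theta)\hat\kappa(d\theta),
\end{equation*}
so that the predictable compensator of the fragmentation jumps of $X$ is identical in both SDE formulations. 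Since the drift, diffusion, and positive jump terms are unchanged, both SDEs yield the same martingale problem for $X$.

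For the implication $(\Leftarrow)$ I would argue as follows. Suppose $X$ is the pathwise unique nonnegative strong solution to the second SDE driven by $(B,\widetilde{Q},N)$. The thinned measure $N^*(ds,d\theta):=\int_0^\infty \mathbf{1}_{\{u\le r(X_{s^-})\rtheta(\theta)\}}N(ds,du,d\theta)$ is a point process on $\R_+\times[0,1]$ whose predictable compensator equals $\hat r(X_{s^-})\hat\kappa(d\theta)ds$ by the identity above. Enlarging the probability space by an independent Poisson random measure $N^{\mathrm{aux}}$ on $\R_+\times\R_+\times[0,1]$ with intensity $ds\otimes dz\otimes\hat\kappa(d\theta)$, and attaching to each atom $(s_i,\theta_i)$ of $N^*$ an independent uniform $U_i$ on $[0,1]$, I would define $N'$ as the superposition of the atoms $\{(s_i,\hat r(X_{s_i^-})U_i,\theta_i)\}_i$ with the restriction of $N^{\mathrm{aux}}$ to $\{z>\hat r(X_{s^-})\}$. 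A direct compensator computation then shows that $N'$ is a Poisson random measure with intensity $ds\otimes dz\otimes\hat\kappa(d\theta)$, and by construction $X$ satisfies the first SDE driven by $(B,\widetilde{Q},N')$. The direction $(\Rightarrow)$ is entirely symmetric: one thins $N'$ against $\{z\le \hat r(X_{s^-})\}$, attaches to each retained atom $(s_i,\theta_i)$ an independent uniform on $[0,r(X_{s_i^-})\rtheta(\theta_i)]$ to produce a $u$-coordinate, and fills in with an independent external PPM with intensity $ds\otimes du\otimes\kappa(d\theta)$.

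The compensator identity itself is a one-line computation, so the main technical obstacle is the rigorous verification that the auxiliary Poisson random measure $N'$ (resp.\ $N$) constructed above is genuinely Poisson on the enlarged probability space with the prescribed intensity and is independent of $B$, $\widetilde{Q}$ and of the initial condition. This is the standard Poisson embedding / thinning construction for SDEs driven by marked point processes; once the driving noise is in place, the pathwise uniqueness of both formulations (inherited from \cite[Proposition 1]{palau2018branching}) identifies the two solutions and completes the equivalence.
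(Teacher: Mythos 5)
Your proposal is correct, and it is essentially the argument the authors have in mind: the paper states Lemma \ref{chgt-eds} without an explicit proof, saying only that it follows ``using the properties of Poisson random measures,'' and your compensator-matching plus thinning/marking construction (spread each retained atom uniformly over $[0,\hat r(X_{s^-})]$ in the $z$-coordinate, superpose an independent auxiliary PPM on the complement, and invoke Watanabe's characterization to identify $N'$ as Poisson with intensity $ds\otimes dz\otimes\hat\kappa(d\theta)$) is the standard way to make that one-line indication rigorous. The only point worth stating explicitly in a write-up is the joint independence of $(B,\widetilde{Q},N')$, which follows because all three have deterministic compensators/brackets with respect to the same enlarged filtration; with that in place, pathwise uniqueness for both formulations identifies the solutions as you say.
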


\section*{Acknowledgments}
The authors are grateful to V. Bansaye for his advice and comments and to B. Cloez for fruitful discussions.
This work was partially funded by the Chair "Mod\'elisation Math\'ematique et Biodiversit\'e" of VEOLIA-Ecole Polytechnique-MNHN-F.X. and by the French national research agency (ANR) via project MEMIP (ANR-16-CE33-0018).

\bibliographystyle{abbrv}
\bibliography{biblio}

\end{document}